\newlength{\ml}
\newtheorem{thm}{Theorem}[section]
\newtheorem{lem}[thm]{Lemma}
\newtheorem{cor}[thm]{Corollary}
\newtheorem{prop}[thm]{Proposition}
\newtheorem{conj}[thm]{Conjecture}
\newtheorem{open}[thm]{Open Problem}
\def\bbs{{\text{\small $\boxed{B}$\,}}}
\def\zz{\mathbb Z}
\def\nn{\mathbb N}
\def\sm{\smallsetminus}
\def\Ga{\Gamma}
\def\la{\lambda}
\def\si{\sigma}
\def\al{\alpha}
\def\om{\omega}
\def\cC{\mathcal C}
\def\ca{\mathcal A}
\def\cd{\mathcal D}
\def\cf{\mathcal F}
\def\cp{\mathcal P}
\def\cw{\mathcal W}
\def\br{\mathbf{r}}
\def\ssu{\subset}
\def\ssv{\subseteq}
\def\<{\langle}
\def\>{\rangle}
\def\oa{{\overline{\al}}}
\def\ts{\hskip.015cm}
\def\0{{\mathbf 0}}
\DeclareSymbolFont{extraup}{U}{zavm}{m}{n}
\DeclareMathSymbol{\vardiamond}{\mathalpha}{extraup}{87}
\newcommand{\lra}{\leftrightarrow}
\def\rA{\textrm{A}}
\def\.{\hskip.06cm}
\def\ts{\hskip.03cm}
\def\mts{\hspace{-.04cm}}
\def\ella{\rho}
\def\kK{m}
\def\nin{\noindent}
\def\ns{\normalsize}
\def\P{{{\rm{\textsf{P}}}}}
\def\ParP{{\rm{\textsf{$\oplus$P}}}}
\def\PH{{\rm{\textsf{PH}}}}
\def\BPP{{\rm{\textsf{BPP}}}}
\def\SP{{{\rm{\textsf{\#P}}}}}
\def\NP{{{\rm{\textsf{NP}}}}}
\def\EXP{{{\rm{\textsf{EXP}}}}}
\def\NEXP{{{\rm{\textsf{NEXP}}}}}
\def\SEXP{{{\rm{\textsf{\#EXP}}}}}
\def\EEXP{{{\rm{\textsf{EXPSPACE}}}}}
\def\PAR{{\rm{\textsf{$\oplus$EXP}}}}
\def\ord{\eta}
\begin{document}
\title{Pattern Avoidance Is Not P-recursive}

\author[Scott Garrabrant \and Igor Pak]{Scott~Garrabrant$^{\star }$ \and Igor~Pak$^{\star }$}

\thanks{\thinspace~${\hspace{-.45ex}}^\star $Department of Mathematics,
UCLA, Los Angeles, CA, 90095.
\hskip.06cm
Email:
\hskip.06cm
\texttt{\{coscott,\ts\/pak\}@math.ucla.edu}}

\date{\today}


\begin{abstract}
Let $\cf \subset S_k$ be a finite set of permutations and
let  $C_n(\mathcal{F})$  denote the number of permutations
$\si\in S_n$ \emph{avoiding} the set of patterns~$\cf$.
The \emph{Noonan--Zeilberger conjecture} states that the sequence
$\{C_n(\mathcal{F})\}$ is P-recursive.  We disprove this
conjecture.

The proof uses Computability Theory and builds on our
earlier work~\cite{GP1}.  We also give two applications
of our approach to complexity of computing $\{C_n(\mathcal{F})\}$.
\end{abstract}

\maketitle

\vskip.9cm

\section{Introduction}

\nin
Combinatorial sequences have been studied for centuries,
with results ranging from minute properties of individual
sequences to broad results on large classes of sequences.
Even just listing the tools and ideas can be exhausting,
which range from algebraic to bijective, to probabilistic
and number theoretic~\cite{Rio}.  The existing technology
is so strong, it is rare for an open problem to remain
unresolved for more than a few years, which makes the
surviving conjectures all the more interesting and
exciting.

The celebrated \emph{Noonan--Zeilberger Conjecture} is one such open problem.
It is in the area of \emph{pattern avoidance} which has been
very popular in the past few decades, see e.g.~\cite{Bona,Kit}.
The problem was first raised as a question by Gessel in 1990,
see~\cite[$\S$10]{Ges}.
In 1996, it was upgraded to a conjecture and further investigated by
Noonan and Zeilberger~\cite{NZ}, see also~$\S$\ref{ss:fin-rem-ADE}.
There are now hundreds of papers
in the area with positive results for special sets of patterns, all
in support of the conjecture, and only two recent experimental
clouds against it (see~$\S$\ref{ss:fin-rem-1324}).
Here we disprove the conjecture.

\medskip

Let $\si \in S_n$ and $\om \in S_k$.  Permutation $\si$ is said
to \emph{contain} the \emph{pattern} $\om$ if there is a subset
$X\subseteq \{1,\ldots,n\}$, $|X|=k$, such that $\si|_X$ has
the same relative order as~$\om$.  Otherwise, $\si$
is said to \emph{avoid}~$\om$.
Fix a set of patterns $\cf \subset S_k$. Denote by $C_n(\mathcal{F})$
the number of permutations $\si\in S_n$ \emph{avoiding}
the patterns $\om \in\cf$.  The sequence $\{C_n(\cf)\}$
is the main object in the area, extensively analyzed
from analytic, asymptotic and combinatorial points of view
(see $\S$\ref{ss:fin-rem-growth}).

An integer sequence $\{a_n\}$ is called \emph{polynomially recursive},
or \emph{P-recursive},
if it satisfies a nontrivial linear recurrence relation of the form
$$
q_0(n)\ts a_n \. + \.  q_1(n)\ts a_{n-1} \. + \. \ldots \. + \. q_k(n)\ts a_{n-k} \. = \.0\.,
$$
for some $q_i(x) \in \zz[x]$, $0\le i \le k$.  The study of P-recursive sequences
plays a major role in modern Enumerative and Asymptotic Combinatorics (see e.g.~\cite{FS,Odl,Sta}).
They have \emph{D-finite} (also called \emph{holonomic})
generating series and various asymptotic properties (see~$\S$\ref{ss:fin-rem-asy}).

\begin{conj}[Noonan--Zeilberger]\label{conj:NZT}
Let $\cf \ssu S_k$ be a fixed set of patterns.  Then the sequence
$\{C_n(\mathcal{F})\}$ is P-recursive.
\end{conj}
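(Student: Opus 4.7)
The plan is to construct an explicit finite pattern set $\cf \ssu S_k$ for which the sequence $\{C_n(\cf)\}$ is \emph{not} P-recursive, thereby refuting Conjecture~\ref{conj:NZT}. The approach is computability-theoretic: a P-recursive sequence is determined by finitely many polynomial coefficients $q_i$ together with finitely many initial values, so the class of P-recursive integer sequences is countable, and each such sequence can be computed in time polynomial in $n$ once the recurrence is known. The plan is to show that pattern avoidance is expressive enough to simulate a Turing-complete device and thus produce sequences outside this restricted class.

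The first step is to import a family of non-P-recursive combinatorial sequences $\{b_n\}$ from our earlier work~\cite{GP1}. In that paper, such sequences arise by counting objects built from finitely many \emph{gadget} types subject to local compatibility rules, where the rules collectively simulate a universal model of computation (e.g.\ a two-counter machine). The sequence $\{b_n\}$ inherits its failure of P-recursiveness from a suitable hardness or growth property of this simulation, and any combinatorial class whose enumeration reduces to $\{b_n\}$ in a manner respecting the closure operations of the P-recursive class is automatically also non-P-recursive.

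The second and central step is the encoding, which I expect to be the main obstacle. I would realize each gadget from the \cite{GP1} construction as a prescribed relative-order pattern on a short block of positions, and encode each compatibility rule by forbidding a finite list of short patterns whose appearance would indicate two incompatible gadgets sitting in a forbidden configuration. The difficulty is that pattern avoidance only constrains the relative orders of $k$-subsets for a fixed $k$, whereas the computation naturally exhibits long-range constraints; these must be transmitted through chains of short, local order constraints while keeping $|\cf|$ and $k$ both finite. I anticipate needing auxiliary "skeleton," "spacer," and "wire" patterns, together with a rigid global shape that every $\cf$-avoiding permutation is forced into, so that the $\cf$-avoiders in $S_n$ are in controlled bijection with valid computation traces of length $\sim n$.

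The final step is verification. I would check that the bijection is tight enough that $C_n(\cf)$ equals $b_n$ up to an operation under which P-recursive sequences are closed (polynomial re-indexing, addition of a P-recursive correction term, or convolution with a P-recursive auxiliary). Since $\{b_n\}$ is not P-recursive, neither is $\{C_n(\cf)\}$, completing the disproof. Tracking the complexity of the simulated machine inside the same encoding should, without additional machinery, also yield the two complexity-theoretic applications announced in the abstract.
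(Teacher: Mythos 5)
Your high-level strategy — simulate a universal model of computation inside pattern avoidance, building on the automata machinery of \cite{GP1} — does match the paper's plan, but the proposal misses the one idea that makes the whole thing work: the entire argument must be carried out modulo~$2$, via a sign-reversing involution, and with \emph{two} pattern sets rather than one. You aim for a ``controlled bijection'' between $\cf$-avoiders in $S_n$ and computation traces. That is not achievable: the vast bulk of $\cf$-avoiding permutations carry none of the rigid block structure and cannot be forced into a computation-trace shape by finitely many forbidden local patterns. What the paper does instead is build two sets $\cf\ssu\cf'$, look at the difference set $\cd_n=\cC_n(\cf')\sm\cC_n(\cf)$ (whose members are forced to contain a special block and hence \emph{do} have rigid structure), and define an explicit involution $\phi$ on $\cd_n$ whose fixed points are in bijection with balanced paths of the automaton. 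That only yields $C_{cn+d}(\cf)-C_{cn+d}(\cf')\equiv G(\Gamma,n)\pmod 2$, not an exact count, which is why the criterion from \cite{GP1} is specifically a \emph{parity} criterion (a P-recursive sequence's binary parity word must omit some finite word). The conclusion is correspondingly non-constructive: at least one of $\cf,\cf'$ is not P-recursive, but one cannot say which.

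Your framing of why $\{b_n\}$ fails P-recursiveness (``suitable hardness or growth property'') and of P-recursive sequences as a countable class computable in poly time also points in the wrong direction: no diagonalization or complexity lower bound is invoked to disprove the conjecture. The disproof is purely combinatorial, resting on the subword-richness of the parity sequence of $G(\Gamma_1,n)$, which is incompatible with P-recursiveness by the mod-2 criterion. You should also replace ``two-counter machine'' with the two-stack automaton model: the stacks are what the involution $\pi_\gamma$ and the matrix patterns $Z_p$ encode, and the non-crossing condition of Proposition~\ref{prop}(3) is exactly what the forbidden set $\cf_5$ enforces. Without the mod-2/involution mechanism, the ``verification'' step of your proposal has no way to succeed, so this is a genuine gap rather than a stylistic difference.
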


The following is the main result of the paper.

\begin{thm}\label{t:NZT}
The Noonan--Zeilberger conjecture is false. More precisely, there exists a set
of patterns $\mathcal{F} \ssu S_{80}$,  such that the sequence
$\{C_n(\mathcal{F})\}$ is \ts \emph{not} \ts P-recursive.
\end{thm}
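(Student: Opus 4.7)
The plan is to transfer a known non-P-recursive sequence into the pattern-avoidance framework via a reduction built on the computability approach of~\cite{GP1}.

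\medskip

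\noindent\textbf{Step 1: Import a non-P-recursive target.} From~\cite{GP1} we have available integer sequences $\{a_n\}$ with simple combinatorial or algorithmic descriptions (for instance, counts of accepting runs of small automata or of configurations of a restricted computational device) which are provably \emph{not} P-recursive. The underlying principle is that P-recursive sequences lie in a restrictive complexity class inside the computable sequences, so one can diagonalize against this class using a time-hierarchy separation or a direct construction to produce a target $\{a_n\}$ whose description is combinatorial but whose growth pattern defies any nontrivial linear recurrence with polynomial coefficients. This target serves as the input to the present construction.

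\medskip

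\noindent\textbf{Step 2: Encode the target using pattern avoidance.} The main technical task is to construct a finite pattern set $\mathcal{F}\subset S_{80}$ whose avoiders of length $n$ are in controlled correspondence with configurations (or computation traces) of the device producing $\{a_n\}$. Concretely, I would partition the positions of $\sigma\in S_n$ into consecutive blocks of bounded size, reserving each block to encode one symbol/state of a computation trace, and then use forbidden patterns of size at most~$80$ to enforce three kinds of local constraints: (a)~well-formedness of each block; (b)~consistency of every pair of consecutive blocks under the transition rules of the model; and (c)~absence of spurious long-range interactions between non-adjacent blocks. The bound $k=80$ reflects the size of the window needed to simultaneously inspect two adjacent blocks together with their delimiting markers, scaled by the number of control states and tape symbols of the simulated device. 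Once such an $\mathcal{F}$ is built, the count $C_n(\mathcal{F})$ either equals, or is related by a P-recursive-preserving transform to, the count of length-$n$ traces of the device.

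\medskip

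\noindent\textbf{Step 3: Conclude.} By the standard closure properties of P-recursive sequences (under linear combinations with polynomial coefficients, arithmetic subsequences, and Hadamard product with P-recursive sequences), P-recursiveness of $\{C_n(\mathcal{F})\}$ would imply P-recursiveness of $\{a_n\}$, contradicting Step~1. Therefore $\{C_n(\mathcal{F})\}$ is not P-recursive, proving the theorem. The chief obstacle is Step~2: engineering a pattern set of bounded size that enforces global computational consistency \emph{purely through local pattern constraints}, while keeping the multiplicity of avoiders per trace controlled enough that $C_n(\mathcal{F})$ faithfully tracks the hard sequence. The bound $k=80$ is not expected to be sharp; the content of the theorem is the existence of \emph{some} finite $k$ for which the simulation succeeds.
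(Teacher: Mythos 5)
Your high-level plan — embed a machine simulation into pattern avoidance via blockwise local constraints, then pull back non-P-recursiveness through closure properties — matches the paper's spirit, but it misses the mechanism that actually makes the argument close, and as stated Step~3 does not work.

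The paper does \emph{not} arrange for $C_n(\cf)$ (or any P-recursive-preserving transform of it) to equal the hard sequence exactly. Controlling the exact count of avoiders per computation trace is precisely what is out of reach, which is why your worry about ``multiplicity of avoiders per trace'' is fatal to the proposal as written. The resolution in the paper is a \emph{mod-2} argument resting on a specific criterion (Theorem~\ref{l:mod2}, i.e.\ Lemma~2 of~\cite{GP1}): if $\{a_n\}$ is P-recursive, then the infinite binary word $(a_n \bmod 2)$ must \emph{miss} some finite binary subword. The paper therefore only needs to control a parity, and it does so by constructing \emph{two} sets of partial patterns $\cf,\cf'$ and showing (Main Lemma~\ref{l:mainL}) that $C_{cn+d}(\cf)-C_{cn+d}(\cf')\equiv G(\Gamma,n)\pmod 2$, where $G(\Gamma,n)$ counts balanced paths of a two-stack automaton. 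The equality mod~2 is obtained by a sign-reversing involution on the difference set $\cd_n=\cC_{cn+d}(\cf')\sm\cC_{cn+d}(\cf)$ whose fixed points biject with balanced paths (Lemmas~\ref{TL1}--\ref{TL3}). Finally, a specific automaton $\Gamma_1$ is built (Lemma~\ref{l:NZL}) whose balanced-path parity sequence contains \emph{every} finite binary word, contradicting Theorem~\ref{l:mod2} if both $\{C_n(\cf)\}$ and $\{C_n(\cf')\}$ were P-recursive; hence at least one of them is the desired counterexample.

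Concrete gaps in your proposal: (i)~you describe Step~1 as diagonalization or a time-hierarchy separation, but the paper uses the sharper and more specific mod-2 subword criterion, which is what makes the counting tractable; (ii)~you propose a single pattern set and an exact (or P-recursive-preserving) correspondence between avoiders and traces, whereas the paper needs two pattern sets and only a parity identity, proved by an involution, not a bijection; (iii)~the closure-property argument in your Step~3 has nothing to grab onto without (i) and (ii), because ``$C_n(\cf)$ faithfully tracks $a_n$'' is not achieved and is not needed. You also do not mention the reduction from partial patterns to genuine permutation patterns (Proposition~\ref{prop2}), which is required to land inside $S_{80}$ as the theorem claims.
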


We should mention that in contrast with most literature in the area which studies
small sets of patterns, our set~$\cf$ is enormously large and we make no
effort to decrease its size (cf.~$\S$\ref{ss:fin-rem-size-small}).
To be precise, we construct two large sets
$\cf,\cf' \ssu S_{80}$ and show that at least one of them gives
a counterexample to the conjecture.  In fact, it is conceivable
that a single permutation pattern $\om=(1324)$ may be sufficient
(see~$\S$\ref{ss:fin-rem-1324}).

\smallskip

The proof of Theorem~\ref{t:NZT} is based on the following idea.
Roughly, we show that every \emph{two-stack automaton}~$M$
can be emulated by a finite set of permutation patterns.
More precisely, we show that the number of accepted paths
of~$M$ is equal to $C_n(\cf)$~mod~2, for a subset of
integers~$n$ forming an arithmetic progression
(see Main Lemma~\ref{l:mainL}).  This highly technical construction
occupies much of the paper.  The rest of the proof is based on our
approach in~\cite{GP1}, where we resolved \emph{Kontsevich's problem} on
the P-recursiveness of certain numbers of words in linear groups.
The ability to emulate \emph{any} two-stack automaton~$M$
in the weak sense described above is surprisingly powerful (see below).

\smallskip

Let us now present two applications of our results to the
complexity of computing $\{C_n(\cf)\}$.
Two sets of patterns $\cf_1$ and $\cf_2$ are called
\emph{Wilf--equivalent}, denoted $\cf_1\sim \cf_2$, if
$C_n(\cf_1) = C_n(\cf_2)$ for all~$n$.  In~\cite{Vat2},
Vatter asked whether it is decidable when two patterns
are Wilf--equivalent.  Here we resolve a mod-2 version of this
problem (cf.~Section~\ref{s:undecide} and
$\S$\ref{ss:fin-rem-P-recursive}).

\begin{thm}\label{t:undecide}
The problem whether \. $C_n(\cf_1) = C_n(\cf_2)$~{\rm mod}~$2$ \.
for all \. $n\in \nn$, is undecidable.
\end{thm}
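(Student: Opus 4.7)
My plan is to reduce the mod-$2$ Wilf-equivalence problem from an undecidable problem about two-stack automata. Because two-stack pushdown automata are Turing-complete, the following \emph{mod-$2$ acceptance problem} is undecidable: given a two-stack automaton $M$, decide whether the number $a_n(M)$ of accepting computations of $M$ of length $n$ is even for every $n \in \nn$. This undecidability is standard: from a Turing machine $T$ one builds $M_T$ so that accepting computations of $M_T$ are in bijection with halting computations of $T$, making $a_n(M_T) \equiv 0 \pmod 2$ for every $n$ if and only if $T$ never halts.

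Given such an $M$, I apply Main Lemma~\ref{l:mainL} to obtain $\cf_M \ssu S_{80}$ and an arithmetic progression $AP$ with $C_n(\cf_M) \equiv a_{f(n)}(M) \pmod 2$ for $n \in AP$, where $f$ is the affine change of index produced by the lemma. I then produce a control set $\cf'_M \ssu S_{80}$ by applying Main Lemma~\ref{l:mainL} to the trivial two-stack automaton $M_0$ that rejects immediately, using the \emph{same} construction template, so that the same arithmetic progression $AP$ appears and the values of $C_n(\cf_M)$ and $C_n(\cf'_M)$ coincide mod $2$ for all $n \notin AP$. Under these conditions, $C_n(\cf_M) \equiv C_n(\cf'_M) \pmod 2$ for every $n$ if and only if $a_n(M) \equiv 0 \pmod 2$ for every $n$, which is exactly the mod-$2$ acceptance problem for $M$. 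Taking $\cf_1 = \cf_M$ and $\cf_2 = \cf'_M$ thus transfers undecidability and proves Theorem~\ref{t:undecide}.

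The hard step is justifying that the off-$AP$ values of $C_n(\cf_M)$ and $C_n(\cf'_M)$ agree mod $2$. The Main Lemma as stated controls only $n \in AP$, so this requires inspecting its proof to see that permutations counted by $C_n(\cf_M)$ for $n \notin AP$ arise from generic book-keeping configurations independent of the transition table of $M$, and hence are common to every instance of the template. If this cannot be read off the construction directly, the fallback is to enlarge the pattern sets by a common ``padding'' shell $\cg$, and to form $\cf_M \cup \cg$ and $\cf'_M \cup \cg$ in a way that cancels the off-$AP$ contributions mod $2$ via a symmetric-difference argument while preserving the on-$AP$ encoding of $a_n(M)$. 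Once this off-$AP$ matching is secured, the reduction is immediate and Theorem~\ref{t:undecide} follows from the undecidability of mod-$2$ acceptance.
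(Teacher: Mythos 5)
Your high-level strategy is right---reduce from an undecidable problem about (Turing-complete) two-stack automata via the Main Lemma---and you correctly identify the off-progression matching as the step that needs justification. But there is a structural misreading of Main Lemma~\ref{l:mainL} that makes the specific reduction fail. The lemma does not produce a single set $\cf_M$ with $C_n(\cf_M) \equiv a_{f(n)}(M) \pmod 2$ on an arithmetic progression; it produces a \emph{pair} $(\cf, \cf')$, with $\cf' = \cf \cup \{B,B'\}$, such that the \emph{difference} $C_{cn+d}(\cf) - C_{cn+d}(\cf')$ encodes $G(\Gamma,n)$ mod~2. There is no control over either value alone. Consequently, your ``control set'' $\cf'_M$ built by re-running the construction on a trivial automaton $M_0$ compares apples to oranges: even on the arithmetic progression, $C_n(\cf_M) - C_n(\cf'_M)$ is not what the Main Lemma controls, since the two sets come from two unrelated applications of the lemma (to $\Gamma_M$ and $\Gamma_{M_0}$, which have different vertex sets, labels, and alphabets $\ca_g$). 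There is no reason for the book-keeping contributions to match, on or off the progression.

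The fix is to use the pair $(\cf,\cf')$ that Main Lemma~\ref{l:mainL} already hands you for the single automaton $\Gamma_M$ simulating a deterministic Turing machine $M$. Since $\cf'=\cf\cup\{B,B'\}$, the difference $C_m(\cf)-C_m(\cf')$ is exactly $|\cd_m|$, and the involution $\phi$ constructed in the proof (Lemma~\ref{TL1}) shows this is odd precisely when $\phi$ has a fixed point. Fixed points are shown (Lemmas~\ref{TL2},~\ref{TL3}) to be in bijection with balanced paths, and the rigid block structure forces the matrix size to be $m=(3n+2)g$; for any other $m$ the fixed-point set is empty and $|\cd_m|$ is even. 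This is what justifies the off-progression matching, not an independent ``padding shell'' argument. Combined with the deterministic-TM observation that $G(\Gamma_M,n)=0$ for all $n$ iff $M$ never halts (a halting deterministic machine yields exactly one balanced path at one length), the reduction from the halting problem is immediate. Your ``mod-2 acceptance problem'' abstraction adds nothing here and obscures that only the all-zero-versus-not distinction is needed.
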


For the second application, we say that an integer sequence $\{a_n\}$ has
a \emph{Wilfian formula}, if it can be computed in time polynomial in~$n$.
This notion was introduced by Wilf in a different context~\cite{Wilf-answer}.
Recently, both Klazar~\cite{Kla} and Vatter~\cite{Vat2} asked whether every
sequence $\{C_n(\cf)\}$ has a Wilfian formula.

\begin{thm}\label{t:wilfian}
If \. {\rm $\EXP \ne \PAR$}, then there exists a finite set of patterns~$\cf$, such that
the sequence $\{C_n(\cf)\}$ cannot be computed in time polynomial in~$n$.
\end{thm}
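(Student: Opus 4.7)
The plan is to derive a contradiction from the combination of the hypothesis $\EXP\ne\PAR$, the Main Lemma, and the existence of a Wilfian formula. The key observation is that a two-stack pushdown automaton can simulate an arbitrary Turing machine with only polynomial slowdown, so counting accepting paths of length $n$ modulo $2$ can encode arbitrarily hard $\ParP$ computations; the Main Lemma then transports this hardness to $C_n(\cf)\bmod 2$.

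Fix any language $L\in\PAR\sm\EXP$, whose existence follows from $\EXP\ne\PAR$. Let $N$ be a nondeterministic Turing machine deciding $L$ in time $T(m)=2^{m^c}$ under parity-of-accepting-paths acceptance. Apply the standard padding construction to produce a tally language $T_L$ satisfying $T_L\in\ParP$ and $T_L\in\P \Leftrightarrow L\in\EXP$. Thus under the hypothesis, the characteristic sequence $\{\chi_{T_L}(n)\}_{n\ge 0}$ is not computable in time polynomial in $n$.

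Using the classical Turing-equivalence of two-stack pushdown automata, construct a two-stack automaton $M$ whose number of accepting computations of length $n$ satisfies $a_n(M)\equiv \chi_{T_L}(n) \pmod 2$. The automaton is assembled from three gadgets: a deterministic decoder that extracts from $n$ the relevant $L$-input $x$ together with its nominal simulation time $T(|x|)$; a nondeterministic simulator of $N(x)$ in which each nondeterministic choice of $N$ is mirrored by exactly one branching of $M$; and a deterministic length-padding gadget that idles the remaining steps so that $M$'s entire computation has length exactly $n$. The padding must be strictly deterministic on every branch so that no spurious accepting paths are introduced, and the bijection between accepting runs of $M$ of length $n$ and accepting paths of $N(x)$ must be length-exact.

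Invoking the Main Lemma (Lemma~\ref{l:mainL}) on $M$ produces a finite $\cf\ssu S_{80}$ and an arithmetic progression $A=\{a+bk\colon k\in\nn\}$ with $C_n(\cf)\equiv a_n(M)\pmod 2$ for $n\in A$. By pre-composing the decoder of $M$ with the affine change of variables $n\mapsto(n-a)/b$, we arrange that every instance of $T_L$ is realized by some $n\in A$. A Wilfian formula for $\{C_n(\cf)\}$ would then give a polynomial-in-$n$ algorithm for $\chi_{T_L}$ (on its entire support), hence $L\in\EXP$, contradicting the choice of $L$. The main obstacle is the length-exact emulation above: ensuring that the \emph{number} of accepting computations of $M$ of length \emph{exactly} $n$ has the correct parity, with no contamination from the padding or decoder. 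Absorbing the arithmetic progression $A$ into the decoder is a secondary technical point, handled by treating the Main Lemma's affine output as a parameter of the automaton design, which is legitimate because the construction of $M$ is uniform in those parameters.
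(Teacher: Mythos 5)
Your overall strategy is the same as the paper's: pick a language hard for $\PAR$, encode its parity-acceptance into a two-stack automaton, push through the Main Lemma, and let a hypothetical Wilfian formula collapse $\PAR$ into $\EXP$. But two steps in your write-up are genuinely incorrect and need repair.

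First, you invoke the Main Lemma as if it yields a \emph{single} set $\cf$ with $C_n(\cf)\equiv a_n(M)\pmod 2$ on an arithmetic progression. It does not. Lemma~\ref{l:mainL} produces a \emph{pair} $(\cf,\cf')$ of forbidden sets and the congruence involves the \emph{difference} $C_{cn+d}(\cf)-C_{cn+d}(\cf')\equiv G(\Gamma,n)\pmod 2$. There is no reason $C_{cn+d}(\cf')$ should vanish mod~$2$, so your congruence as stated is false. The actual argument must assume Wilfian formulas for \emph{both} $\cf$ and $\cf'$, derive the contradiction, and then conclude that at least one of the two sets lacks a Wilfian formula. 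This is also why the theorem only asserts existence of some $\cf$; one cannot point to a particular one. (Minor: the sets are partial patterns of size depending on $g$, and here $g$ must be larger than $10$, so $\ssu S_{80}$ is not right either.)

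Second, your automaton is built around ``a deterministic decoder that extracts from $n$ the relevant $L$-input $x$'' and ``a deterministic padding gadget that idles the remaining steps so that $M$'s computation has length exactly $n$.'' Both gadgets presuppose that the two-stack automaton knows $n$ before it starts to run. It does not: in this paper's model, $\Gamma$ takes no input at all, and $n$ is merely the length of a path counted after the fact. There is no moment at which a ``decoder'' can read $n$, nor can the padding gadget know when it has reached length exactly $n$. The paper's resolution is the crucial idea you are missing: have the automaton \emph{nondeterministically guess} the input $n$ (this is the passage from $M'$ to $M''$ in the paper), and then arrange that every accepting run on guess $n$ has a \emph{predetermined} length $\ell(n)=\lfloor 2^{p'(\log n)}+n\rfloor$, chosen so that $\ell$ is strictly increasing and hence injective. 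Then accepting paths of a fixed length $\ell$ all come from the unique $n$ with $\ell(n)=\ell$ (or there are none), which is what makes $G(\Gamma,\cdot)$ track the desired parities. Your tally-language detour is a harmless reframing of the paper's padding, and the affine-reparametrization remark is fine, but without the ``guess $n$ and control the length as a function of the guess'' device, the gadget decomposition you propose is not implementable as a two-stack automaton.
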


This also seems to give a good answer to Wilf's original question
``Can one describe a reasonable and natural family of combinatorial enumeration
problems for which there is provably no polynomial-in-$n$ time formula or
algorithm to compute $f(n)$?''~\cite{Wilf-answer}
(see also~$\S$\ref{ss:fin-rem-wilfian}).

Here $\EXP$ is the complexity class of problems decidable
in exponential time, and $\PAR$ is the exponential time
analogue of the $\ParP$, so $\EXP \subseteq \PAR \ssv \EEXP$.
We refer to Section~\ref{s:wilfian} for the definitions,
details and references.
It is believed that $\EXP \ne \PAR$ (see~$\S$\ref{ss:wilfian-complexity}),
so the answer to the question by Klazar and Vatter is
likely negative (cf.~$\S$\ref{ss:fin-rem-complexity}).

\medskip

The rest of the paper is structured as follows.  We begin with an explicit
construction of a two-stack automata with a non-P-recursive number
of accepted paths (Section~\ref{s:2SA}).  In Section~\ref{s:proof-main-thm},
we reduce the proof of Theorem~\ref{t:NZT} to the Main Lemma~\ref{l:mainL}
on embedding two-stack automata into pattern avoidance problems.
The proof of the Main Lemma spans the next four sections.  We first
present the construction in Section~\ref{s:contruction}. In Section~\ref{s:MLP}, we prove
the lemma modulo a number of technical results.  We illustrate the
construction in a lengthy example in Section~\ref{s:example}, and
prove the technical results in Section~\ref{s:TL}.  We proceed to
prove theorems~\ref{t:undecide} and~\ref{t:wilfian} in sections~\ref{s:undecide}
and~\ref{s:wilfian}, respectively.  We conclude with final remarks and
open problems in Section~\ref{s:fin-rem}.

\bigskip

\section{Two-stack automata}\label{s:2SA}

\nin
In this section we construct an automaton with a non-P-recursive number
of accepted paths.  The construction is technical, but elementary.
Although it is more natural if the reader is familiar
with basic Automata Theory (see e.g.~\cite{HMU,Sip}),
the construction is completely self-contained and is given
in the language of elementary Graph Theory.  The proof, however,
is not self-contained and follows a similar proof in~\cite{GP1}.

To be precise, we give an explicit construction of a graph, where the
vertices have certain variables as weights.  We then count the number
$a_n$ of paths of length $n$ between two fixed vertices, where only
certain weight sequences are allowed (we call these \emph{balanced paths}).
The non-P-recursiveness of \ts $a_n~\text{mod}~2$ \ts
is explained below.

\smallskip

\subsection{The motivation}\label{ss:2SA-motivation}
It is relatively easy to present a construction of an automaton
which produces a non-P-recursive sequence $\{a_n\}$ of balanced paths.
Our goal in this section is stronger -- the sequence $\{a_n\}$ we
get is not equal to \emph{any} P-recursive sequence modulo~2.
Somewhat informally, we call such automaton \emph{non-P-recursive}.
The advantages afforded by the modulo~2 property are technical
and will become clear later in this paper.

Our main tool for building a non-P-recursive two-stack automaton
is the following result.

\begin{thm}[{\cite[Lemma~2]{GP1}}]\label{l:mod2}
Let~$\{a_n\}$ be a P-recursive integer sequence. Consider an
infinite binary word $\oa=(\al_1\al_2\ldots)$, defined by
$\al_n:= a_n\hskip.15cm{\rm mod}\hskip.15cm  2$.  Then,
there exists a finite binary word which is not a subword of~$\oa$.
\end{thm}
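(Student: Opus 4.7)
The plan is to show that the infinite binary word $\oa = (\al_1\al_2\ldots)$ has polynomially bounded \emph{subword complexity}: if $p(\ell)$ denotes the number of distinct length-$\ell$ factors appearing in~$\oa$, I will argue that $p(\ell) = O(\ell)$. Since there are $2^\ell$ binary words of length $\ell$, for $\ell$ large enough some length-$\ell$ word must be missing from~$\oa$, which is exactly what is claimed.

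The central step is to prove that $\oa$ is \emph{$2$-automatic}: there exists a deterministic finite automaton that, reading the base-$2$ expansion of~$n$, outputs $\al_n$. A classical theorem of Cobham gives $p(\ell) = O(\ell)$ for any $2$-automatic sequence, so this reduction suffices. To establish $2$-automaticity, I would combine two classical theorems: a Furstenberg-type result stating that the generating function $F(x) = \sum a_n x^n$, reduced modulo a prime $p$, is algebraic over $\mathbb{F}_p(x)$ whenever $F$ is D-finite over~$\qqq$; and Christol's theorem, which characterizes $p$-automatic sequences as precisely those with generating function algebraic over $\mathbb{F}_p(x)$. Specializing to $p=2$ gives $2$-automaticity of~$\oa$ and completes the proof.

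A more self-contained alternative bypasses generating functions and works directly with the P-recurrence modulo~$2$. By Lucas' theorem, each $\binom{n}{j} \bmod 2$ is a $2$-automatic function of~$n$; writing each $q_i \in \zz[n]$ in the binomial basis shows that $\bar q_i(n) := q_i(n) \bmod 2$ is also $2$-automatic. At indices~$n$ where $\bar q_0(n) = 1$, the recurrence expresses $\al_n$ as an $\mathbb{F}_2$-linear combination of $\al_{n-1}, \ldots, \al_{n-k}$ with $2$-automatic coefficients, and a product-of-automata construction produces an automaton for~$\oa$. The main obstacle of this route is the set of~$n$ at which $q_0(n)$ is even, where the mod-$2$ recurrence is indeterminate; controlling these indices requires first tracking $a_n$ modulo a higher power of~$2$ and projecting down, which I expect to be the primary technical difficulty. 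This is the main reason to prefer the algebraic/automatic-sequence route above.
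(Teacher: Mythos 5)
Your high-level plan --- bound the subword complexity of $\oa$ and conclude that some word must be missing --- is sound and matches the spirit of the cited lemma, but the specific route has a genuine gap. The crucial step in your first approach, that reducing a D-finite generating function with integer coefficients modulo a prime $p$ always yields a series algebraic over $\mathbb{F}_p(x)$, is not a theorem. Furstenberg's theorem characterizes algebraic series over $\mathbb{F}_p$ as diagonals of bivariate rational functions over the \emph{same} field, and results of Deligne, Denef--Lipshitz and Christol show that diagonals of multivariate rational functions over $\zz$ reduce mod $p$ to algebraic series; but the assertion that every D-finite integer series is such a diagonal is open (Christol's conjecture), and that conjecture is formulated only for \emph{globally bounded} series, which P-recursive integer sequences need not be (e.g.\ $a_n = n!$). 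Nor does D-finiteness of the reduced series help by itself: over $\mathbb{F}_p$ the equation $F' = 0$ is satisfied by $F = \sum_{n\ge 0} x^{p\cdot n!}$, which is not algebraic, so ``D-finite over $\mathbb{F}_p(x)$'' does not entail algebraicity. In short, $2$-automaticity of $\oa$ is not available by this route, and it is far from clear that $\oa$ is $2$-automatic at all.

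Your second route is closer in spirit to what \cite{GP1} must actually do (the present paper calls their argument ``short and self-contained,'' which rules out invoking Christol's theorem), but the gap you flag is the crux, and the fix you sketch does not close it: tracking $a_n \bmod 2^J$ loses $v_2(q_0(n))$ bits of precision at step $n$, and $v_2(q_0(n))$ is unbounded, so no fixed $J$ produces a finite automaton for $\oa$. What can be made to work is to aim directly at the weaker conclusion of the lemma rather than at automaticity: bound the number of distinct length-$\ell$ factors by a counting argument that trades the precision $J$ against the density of starting positions $m$ for which the accumulated $2$-adic loss $\sum_{n=m+1}^{m+\ell} v_2(q_0(n))$ exceeds $J$, using the fact that $\sum_{n \le N} v_2(q_0(n)) = O(N)$ for any nonzero $q_0 \in \zz[x]$. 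The lemma asserts only that some finite word is missing, and a pigeonhole-style estimate can deliver exactly that; $2$-automaticity is a stronger target than you need and quite possibly stronger than is true.
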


What follows is a construction of a two-stack automaton such that
the corresponding binary sequence~$\oa$ contains every finite
binary subword by design.  There are many such automata, in fact.
We give a complete description of this one as we need both its
notation and additional properties of the construction later on.

\subsection{The setup} \label{ss:2SA-setup}
Let~$\Gamma$ be a finite directed graph with vertices~$v_1,\ldots, v_m$.
Let~$X$ denote the set of labels of the form~$x_i$ and let~$X^{-1}$ denote
the set of labels of the form~$x_{i}^{-1}$, where~$i$ is any integer.
Define~$Y$ and~$Y^{-1}$ similarly. Label each vertex of~$\Gamma$ with
an element of~$X\cup X^{-1}\cup Y \cup Y^{-1}\cup\{\varepsilon\}$.

Let~$\ella(v)$ denote the label on vertex~$v$.  We say that~$w_1\sim w_2$
if~$w_1,w_2\in X\cup X^{-1}$ or if~$w_1,w_2\in Y\cup Y^{-1}$.
If~$\Gamma$ has an edge from~$v_i$ to~$v_j$, we say that~$v_i\rightarrow v_j$.

Contrary to standard notation, we refer to a path~$\gamma=\gamma_1\ldots\gamma_n$,
where each~$\gamma_i$ is a vertex, not an edge, and we say that such
a path is of length~$n$, even though it only has~$n-1$ edges.

We further require that $\ella(v_1)=\ella(v_2)=\varepsilon,$ and that
there is no edge $v_i\rightarrow v_j$, with ${\ella(v_i)\sim\ella(v_j)}$.
A graph~$\Gamma$ satisfying all of the above conditions is called a
{\em two-stack automaton}.

As we traverse a path~$\gamma$, we keep track of two words~$w_X\in X^\star $
and~$w_Y\in Y^\star $, which start out empty. Whenever we enter a vertex with
label~$x_i$, we append~$x_i$ to the end of~$w_X$. When we enter a vertex with
label~$x_i^{-1}$, we remove~$x_i$ from the end of~$w_X$.
We modify~$w_Y$ similarly when entering vertices with label~$y_i$ or~$y_i^{-1}$.
When we enter a vertex with label~$\varepsilon$, we do nothing.
A path is called {\em balanced} if every step of this process
is well defined and both~$w_X$ and~$w_Y$ are empty at the end of the path.
Let~$G(\Gamma, n)$ denote the number of balanced paths in~$\Gamma$
from~$v_1$ to~$v_2$ of length~$n$.

Define an involution~$\pi_\gamma\in S_n$ as follows. If the above process
writes an instance of a label to~$w_X$ or~$w_Y$ at some time~$t_i$,
and removes the same instance of that label for the first time at time~$t_j$,
then $\pi(t_i)=t_j$ and $\pi(t_j)=t_i$. If the process does not write
or remove anything at a time step~$t_k$, then $\pi(t_k)=t_k$.
For example, if
$\ella(\gamma_1)\ella(\gamma_2)\ldots\ella(\gamma_9)=
\varepsilon x_1y_1x_1y_1^{-1}x_1^{-1}\varepsilon x_1^{-1}\varepsilon$,
then~$\pi_\gamma=(2\ 8)(3\ 5)(4\ 6)$.
This gives the following alternate characterization of balanced paths.

\begin{prop}\label{prop}
A path~$\gamma$ is balanced if and only if there exists an
involution~$\pi_\gamma\in S_n$ such that:
\begin{enumerate}
\item~$\ella(\gamma_{i})=\varepsilon$ for all~$\pi_\gamma(i)=i$ ,
\item~$\ella(\gamma_{i})\in X\cup Y$ and~$\ella(\gamma_{\pi_\gamma(i)})=\ella(\gamma_{i})^{-1}$, for all~$\pi_\gamma(i)>i$, and
\item There are no~$i$ and~$j$ with~$\ella(\gamma_{i})\sim\ella(\gamma_j)$ such that~$i<j<\pi_\gamma(i)<\pi_\gamma(j)$.
\end{enumerate}
Further, this involution~$\pi_\gamma$ is uniquely defined for each balanced~$\gamma$.
\end{prop}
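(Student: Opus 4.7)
The plan is to prove the two implications separately and then remark that uniqueness falls out for free. For the forward direction I take $\pi_\gamma$ to be the matching described in the paragraph preceding the statement: pair each push position with the position at which that specific stack entry is first popped, and fix every $\ve$-labeled position. Balancedness ensures that every push is eventually popped, so $\pi_\gamma \in S_n$ is a well-defined involution, and properties~(1) and~(2) hold by construction. Property~(3) is the combinatorial shadow of the LIFO discipline: if $i < j < \pi_\gamma(i) < \pi_\gamma(j)$ with $\ella(\gamma_i) \sim \ella(\gamma_j)$, the symbol pushed at time $j$ sits above the symbol pushed at time $i$ on a common stack, yet the pop at time $\pi_\gamma(i)$ would remove the lower one before the upper, which is impossible.

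For the converse, I assume that $\pi$ is an involution satisfying (1)--(3) and run the stack simulation driven by $\ella(\gamma_1)\ldots\ella(\gamma_n)$. The key claim, to be proved by induction on $i$, is that at every pop position $i$ the top of the relevant stack is precisely the push at position $\pi(i)$. Property~(2) forces $\pi(i) < i$ with $\ella(\gamma_{\pi(i)}) = \ella(\gamma_i)^{-1}$, and any $k \in (\pi(i), i)$ that is a push onto the same stack and still unmatched at time $i$ would satisfy $\pi(k) > i$; taking $a = \pi(i)$, $b = k$ then produces the quadruple $a < b < \pi(a) < \pi(b)$ with $\ella(\gamma_a) \sim \ella(\gamma_b)$ that is forbidden by~(3). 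So no such $k$ exists, the pop is legal and removes the symbol introduced at $\pi(i)$. Since properties~(1) and~(2) partition the positions into $\ve$-fixed points and matched push/pop pairs, both stacks end empty, and $\gamma$ is balanced.

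Uniqueness of $\pi_\gamma$ then follows immediately: the stack simulation depends only on $\ella(\gamma)$, and the argument above pins down $\pi(i)$ at every pop position as the position of the top-of-stack push, after which the push positions are forced by the involution property. The only non-bookkeeping observation in the whole proof is the dictionary between the LIFO rule of a stack and the non-crossing condition~(3); once this dictionary is in place I do not anticipate any real obstacle.
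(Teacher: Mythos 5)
The paper dispatches this proposition with ``The proof is straightforward'' and gives no argument, so there is nothing to compare against directly; your write-up supplies exactly the proof the paper leaves to the reader, and it is correct. The only minor caveat worth flagging: in the converse direction, your inductive claim that the top of the stack at pop position $i$ is the entry pushed at $\pi(i)$ also relies on $\pi(i)$ still being on the stack at time $i$, which follows from the induction hypothesis since it is only removed at $\pi(\pi(i)) = i$; and one should note that by (2) every push $k$ has $\pi(k)>k$ and every pop $i$ has $\pi(i)<i$, so the case analysis in your dichotomy is exhaustive. With those details made explicit, your argument is complete and is the natural one the paper intends.
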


The proof is straightforward.

\subsection{Non-P-recursive automaton}\label{ss:2SA-bad}
We are now ready to present a construction of such automaton~$\Ga_1$,
which is given in Figure~\ref{gamma2}. The construction is based on
a smaller automaton $\Ga_2$ we introduced in~\cite{GP1}.

\hskip3.4cm
\begin{figure}
\begin{subfigure}[b]{.48\textwidth}
\tikzset{every state/.style={minimum size=28pt}}
\begin{tikzpicture}[shorten >=1pt,node distance=2cm,on grid,auto,scale=.55,every state/.append style={transform shape}]
   	\node[state,fill=lightgray,xshift=2\ml,yshift=-2\ml] (s1)   {\huge$\varepsilon_1$};
	\node[state,fill=lightgray,xshift=7\ml,yshift=-2\ml] (s2)    {\huge$\varepsilon_2$};
	\node[state,fill=lightgray,xshift=12\ml,yshift=-2\ml] (s3)   {\huge$\varepsilon_3$};
	\node[state,fill=lightgray,xshift=2\ml,yshift=-6\ml] (s6)    {\huge$\varepsilon_6$};
	\node[state,fill=lightgray,xshift=7\ml,yshift=-6\ml] (s5)    {\huge$\varepsilon_5$};
	\node[state,fill=lightgray,xshift=12\ml,yshift=-6\ml] (s4)    {\huge$\varepsilon_4$};
	\node[state,fill=lightgray,xshift=2\ml,yshift=-10\ml] (s8)    {\huge$\varepsilon_8$};
	\node[state,fill=lightgray,xshift=12\ml,yshift=-10\ml] (s7)   {\huge$\varepsilon_7$};
	\node[state,xshift=8.666\ml,yshift=-3.333\ml] (p2)   {$y_1$};
	\node[state,xshift=10.333\ml,yshift=-4.666\ml] (p3)   {$x_2^{-1}$};
	\node[state,xshift=8.666\ml,yshift=-2\ml] (p4)   {$x_0^{-1}$};
	\node[state,xshift=10.333\ml,yshift=-2\ml] (p5)   {$y_1$};
	\node[state,xshift=12\ml,yshift=-4\ml] (p6)   {$x_2^{-1}$};
	\node[state,xshift=9.5\ml,yshift=-6\ml] (p7)   {$y_2^{-1}$};
	\node[state,xshift=2\ml,yshift=-8\ml] (p8)   {$x_1^{-1}$};
	\node[state,xshift=1\ml,yshift=0\ml] (v1)   {$x_2$};
	\node[state,xshift=3\ml,yshift=0\ml] (v2)   {$y_2$};
	\node[state,xshift=6\ml,yshift=0\ml] (v3)   {$x_1^{-1}$};
	\node[state,xshift=8\ml,yshift=0\ml] (v4)   {$y_0$};
	\node[state,xshift=11\ml,yshift=0\ml] (v5)   {$x_1^{-1}$};
	\node[state,xshift=13\ml,yshift=0\ml] (v6)   {$y_1$};
	\node[state,xshift=14\ml,yshift=-1\ml] (v7)   {$x_0^{-1}$};
	\node[state,xshift=14\ml,yshift=-3\ml] (v8)   {$y_0$};
	\node[state,xshift=14\ml,yshift=-5\ml] (v9)   {$y_0^{-1}$};
	\node[state,xshift=14\ml,yshift=-7\ml] (v10)   {$x_0$};
	\node[state,xshift=11\ml,yshift=-8\ml] (v11)   {$y_1^{-1}$};
	\node[state,xshift=13\ml,yshift=-8\ml] (v12)   {$y_0$};
	\node[state,xshift=0\ml,yshift=-6\ml] (v13)   {$x_1^{-1}$};
	\node[state,xshift=2\ml,yshift=-4\ml] (v14)   {$x_0^{-1}$};
	\node[state,xshift=7\ml,yshift=-10\ml] (p0)   {$x_1^{-1}$};
	\node[state,xshift=7\ml,yshift=-12\ml] (p1)   {$x_0^{-1}$};
	\path[->]
    (s1) edge  node {} (s2)
    (s2) edge  node {} (p4)
    (p4) edge  node {} (p5)
    (p5) edge  node {} (s3)
    (s3) edge  node {} (p6)
    (p6) edge  node {} (s4)
    (s4) edge  node {} (p7)
    (p7) edge  node {} (s5)
    (s5) edge  node {} (s2)
    (s2) edge  node {} (p2)
    (p2) edge  node {} (p3)
    (p3) edge  node {} (s4)
    (s5) edge  node {} (s6)
    (s6) edge  node {} (p8)
    (p8) edge  node {} (s8)
    (s8) edge  node {} (p0)
    (p0) edge  node {} (s7)
    (s8) edge  node {} (p1)
    (p1) edge  node {} (s7)
    (s7) edge [bend right=30] node {} (s8)
    (s1) edge  node {} (v1)
    (v1) edge  node {} (v2)
    (v2) edge  node {} (s1)
    (s2) edge  node {} (v3)
    (v3) edge  node {} (v4)
    (v4) edge  node {} (s2)
    (s3) edge  node {} (v5)
    (v5) edge  node {} (v6)
    (v6) edge  node {} (s3)
    (s3) edge  node {} (v7)
    (v7) edge  node {} (v8)
    (v8) edge  node {} (s3)
    (s4) edge  node {} (v9)
    (v9) edge  node {} (v10)
    (v10) edge  node {} (s4)
    (s4) edge  node {} (v12)
    (v12) edge  node {} (v11)
    (v11) edge  node {} (s4)
    (s6) edge [bend left=30] node {} (v13)
    (s6) edge [bend left=30] node {} (v14)
    (v13) edge [bend left=30] node {} (s6)
    (v14) edge [bend left=30] node {} (s6)
       ;
\end{tikzpicture}
\end{subfigure}
\quad
\begin{subfigure}[b]{.48\textwidth}
\begin{tikzpicture}[shorten >=1pt,node distance=2.3cm,on grid,auto,scale=.80,every node/.append style={transform shape}]
      \large
      \node[state] (a)   {$s_1$};
   \node[state] (b) [right=of a] {$s_2$};
   \node[state] (c) [right=of b] {$s_3$};
   \node[state](d) [below =of c] {$s_4$};
   \node[state](e) [left=of d] {$s_5$};
   \node[state](f) [left=of e] {$s_6$};
   \node[state](g) [below=of d] {$s_7$};
   \node[state](h) [below=of f] {$s_8$};
    \path[->]
    (a) edge  node {} (b)
          edge  [loop above] node {\small~$xy$} ()
    (b) edge  node  {\small~$0_x^{-1}1_y$} (c)
    edge  node  {\small~$1_yx^{-1}$} (d)
          edge [loop above] node {\small~$1_x^{-1}0_y$} ()
    (c) edge  node {\small~$x^{-1}$} (d)
          edge [loop above] node {\small~$1_x^{-1}1_y$} ()
          edge [loop right] node {\small~$0_x^{-1}0_y$} ()
    (d) edge  node {\small~$y^{-1}$} (e)
          edge [loop below] node {\small~$1_y^{-1}1_x$} ()
          edge [loop right] node {\small~$0_y^{-1}0_x$} ()
    (e) edge node {} (b)
          edge node {} (f)
    (f) edge  node {\small~$1_x^{-1}$} (h)
          edge [loop above] node {\small~$1_x^{-1}$} ()
          edge [loop left] node {\small~$0_x^{-1}$} ()
    (g) edge [bend right=40] node {} (h)
    (h) edge node {\small~$1_x^{-1}$} (g)
         edge [bend right=40] node {\small~$0_x^{-1}$} (g);
\end{tikzpicture}
\end{subfigure}
\caption{The automata~$\Gamma_1$ (left), and~$\Gamma_2$ (right).}
\label{gamma2}
\end{figure}
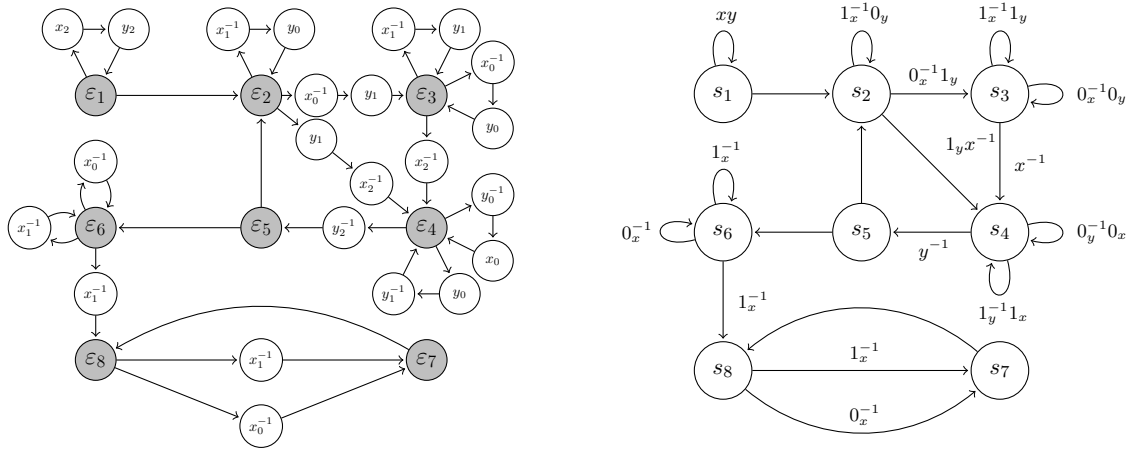

\begin{lem}\label{l:NZL}
There exists a two-stack automaton~$\Gamma_1$, such that
$\al_n:=G(\Gamma_1,n) \in \{0,1\}$ for all $n$, and such
that the word \ts $\oa = (\al_1\al_2\ldots)$ \ts is an infinite
binary word which contains every finite binary word as a subword.
\end{lem}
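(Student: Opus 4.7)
The plan is to view $\Gamma_1$ as an expanded realization of the smaller two-stack automaton $\Gamma_2$ shown on the right of Figure~\ref{gamma2}, constructed in~\cite{GP1}: every edge of $\Gamma_2$ labeled by a nonempty word $w = w^{(1)} \cdots w^{(\ell)}$ over $X \cup X^{-1} \cup Y \cup Y^{-1}$ is replaced in $\Gamma_1$ by a directed path whose $\ell$ intermediate vertices carry the letters $w^{(1)}, \ldots, w^{(\ell)}$ in order, and the states $s_1, \ldots, s_8$ of $\Gamma_2$ correspond to the $\varepsilon$-vertices $\varepsilon_1, \ldots, \varepsilon_8$ of $\Gamma_1$. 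The first step is to verify that the resulting $\Gamma_1$ is a valid two-stack automaton in the sense of \S\ref{ss:2SA-setup}: within each expanded edge the letters of $w$ alternate between $X$-type and $Y$-type symbols, and every crossing between two edges of $\Gamma_2$ passes through an $\varepsilon$-vertex, so no edge $v_i \to v_j$ in $\Gamma_1$ violates $\ella(v_i) \not\sim \ella(v_j)$. A balanced path of length $n$ in $\Gamma_1$ from $\varepsilon_1$ to $\varepsilon_2$ then corresponds bijectively to a balanced run of total length $n$ in $\Gamma_2$ from $s_1$ to $s_2$, so one can carry out the entire analysis inside~$\Gamma_2$.

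Next I would interpret $\Gamma_2$ as a two-stack machine. The loop at $s_1$ pushes matched pairs onto the two stacks to form a seed of controllable size; the loops at $s_2$ and $s_3$ transfer bit-labeled symbols between the stacks, recording a binary string $b$ via their loop counts; the loops at $s_4$ reverse the transfer; and $s_5, s_6, s_7, s_8$ empty what remains, with the accept transition returning to $s_2$. The key technical claim is a determinism lemma: given a state of $\Gamma_2$, the current contents of both stacks, and the remaining path length, there is at most one continuation that yields a balanced run ending at $s_2$. From this it follows immediately that $G(\Gamma_1, n) \in \{0,1\}$ for every~$n$, which is the first assertion of the lemma.

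For the density statement, I would parametrize each balanced path by its seed size $k$ (the number of $s_1$-loops) and by the binary word $b \in \{0,1\}^k$ determined by the loop choices at $s_2$ and $s_3$, and observe that the total length $n$ is an explicit affine function of $k$ and of the number of $1$'s in $b$. For any prescribed finite binary word $w$, one exhibits pairs $(k, b)$ with $b$ containing $w$ at a controlled position, and traces how the corresponding lengths produce a contiguous window of $\oa$ reproducing $w$. Performing this construction for every $w \in \{0,1\}^\ast$ yields the claimed density. The main obstacle is the determinism lemma: one must check, case by case on each state of $\Gamma_2$, that every branching choice is forced by the top-of-stack symbols together with the parity and length constraints imposed by the remaining length. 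This case analysis is the technical heart of the argument, essentially an elaboration of the bookkeeping developed for the prototype automaton in~\cite{GP1}.
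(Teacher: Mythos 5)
Your overall strategy — build $\Gamma_1$ by expanding each labelled edge of the GP1 automaton $\Gamma_2$ into a path of labelled vertices, set up a bijection between balanced paths of $\Gamma_1$ and valid runs of $\Gamma_2$, and count the latter — is precisely what the paper does, and your observation that the $\varepsilon$-vertices correspond to the $s_i$ is correct. The gap is in how you parametrize and count the balanced paths of $\Gamma_2$, and it is not a cosmetic one.

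You claim the balanced paths are parametrized by the seed size $k$ together with a \emph{free} binary word $b\in\{0,1\}^k$ recorded by the loop choices at $s_2$ and $s_3$, with $n$ an affine function of $k$ and the number of $1$'s in $b$. This cannot be right, and in fact would contradict the very statement you are proving: if $b$ were a free parameter and $n$ depended only on $k$ and $|b|_1$, then many distinct $b$'s would yield balanced paths of the same length and $G(\Gamma_1,n)$ would exceed $1$. Looking at the transitions of $\Gamma_2$: the $s_2$-loop is $1_x^{-1}0_y$, the $s_2\to s_3$ edge is $0_x^{-1}1_y$, and the $s_3$-loops are $1_x^{-1}1_y$ and $0_x^{-1}0_y$. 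These are exactly the carry, flip, and copy steps of a binary increment; the machine deterministically counts $1,2,\ldots,k$ on the stacks. There is no free word $b$ — the only genuine degrees of freedom are $k$ (the $s_1$-loop count) and a terminal parameter $j$ governing the $s_6,s_7,s_8$ phase, constrained so that the $j$-th binary digit of $k$ is $1$. Correspondingly your density argument (``exhibit $(k,b)$ with $b$ containing $w$'') does not go through. The paper's actual mechanism, inherited from GP1's Lemma~3, is: fix $k$; as $j$ ranges over $1,2,\ldots$, the length $n(j,k)$ increases by exactly one per unit of $j$, and $\alpha_{n(j,k)}$ equals the $j$-th bit of $k$; so the window of $\oa$ starting at $n(1,k)=2+14k+6\sum_{i=1}^k\lfloor\log_2 i\rfloor$ reads out $k$ in binary, and letting $k$ range over all positive integers yields every finite binary word as a subword. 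Your ``determinism lemma'' is a reasonable abstraction of what GP1's counting delivers, but it would need an independent proof, and as stated your sketch of the consequences drawn from it is built on the incorrect $(k,b)$ parametrization.
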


\begin{proof}
We give an explicit automaton~$\Gamma_1$ in Figure~1.
This automaton is formed by modifying the automaton~$\Gamma_2$,
given in~\cite{GP1}. Here we use $\varepsilon_1,\ldots,\varepsilon_8$
to denote the same trivial label~$\varepsilon$;
we make this distinction only for the purpose of illustration.
The vertex~$v_1$ is the shaded vertex labelled $\varepsilon_1$,
and the vertex~$v_2$ is the shaded vertex labelled~$\varepsilon_8$.
The vertex labelled~$\varepsilon_i$ in~$\Gamma_1$ corresponds to the
vertex labelled~$s_i$ in~$\Gamma_2$.

The primary difference between $\Gamma_1$ and $\Gamma_2$ is
that~$\Gamma_1$ has labels on vertices while~$\Gamma_2$ has labels on edges.
The labels were also changed by replacing~$0_x$,~$1_x$, and~$x$ with~$x_0$,~$x_1$,
and~$x_2$ respectively, and similarly for~$y$.  Since the lengths of the paths
change slightly, we get a slightly different formula, but the analysis is similar.

In counting paths we follow the proof of Lemma~3 in \cite{GP1}.
The valid paths through~$\Gamma_2$ have
$$
\mu \,= \, j+6k+2\sum_{i=1}^{k}\. \lfloor\log_2i\rfloor \quad \text{edges,}
$$
for some positive integers~$j$ and~$k$ such that the~$j$-th binary digit
of~$k$ is a~$1$. Every path through~$\Gamma_1$ will similarly
have $(\mu+1)$ vertices with label~$\varepsilon.$

Such paths will also have a total of~$4k$ vertices labelled~$x_2$,~$x_2^{-1}$,~$y_2$
or~$y_2^{-1}$, since~$k$ copies each of~$x_2$ and~$y_2$ are written and removed
in the computation. Similarly, every binary integer from 1 to~$k$ is written
and removed from both tapes, so the vertices with the remaining 8 labels are used
$$
\nu \, = \,
4k \. +\. 4\sum_{i=1}^{k}\. \lfloor\log_2i\rfloor \quad \text{times in total.}
$$

In summary, we have~$G(\Gamma_1,n)=1$ for all $n=(\mu+1) +\nu + 4k$,
where $j$ and~$k$ are positive integers such that the~$j$-th binary digit
of~$k$ is a~$1$, and~$G(\Gamma_1,n)=0$ otherwise. The word \ts
$\oa = (\al_1\al_2\ldots)$ \ts then contains the positive integer~$k$
written out in binary starting at
location
$$
n\, =\, 2\. + \. 14k \. + \. 6\. \sum_{i=1}^{k}\.\lfloor\log_2i\rfloor\.,
$$
and will therefore contain every finite binary word as a subword.

Finally, note that in the notion of ``valid path'' from~\cite{GP1},
it was possible for some instance of~$x^{-1}$ to cancel with a
later instance of~$x$.  For our purposes, this difference is
irrelevant since the words defined by paths in~$\Gamma_2$
do not have such cancellations.  \end{proof}

\bigskip

\section{Main Lemma and the proof of Theorem~\ref{t:NZT}}\label{s:proof-main-thm}

In this section we first change our setting from pattern avoidance to
slightly more general but equivalent notion of \emph{partial pattern avoidance}.
We state the Main Lemma~\ref{l:mainL} and show that it implies Theorem~\ref{t:NZT}.

\subsection{Partial patterns} \label{ss:proof-main-thm-partial-patterns}
A 0-1 matrix is called a {\em partial pattern} if
every row and column contains at most one~$1$. Clearly, every permutation pattern
is also a partial pattern. We say that a permutation matrix $M$ \emph{contains}
a partial pattern~$L$, if~$L$ can be obtained from~$M$ by deleting some
rows and columns; we say that $M$ {\em avoids}~$L$ otherwise. Given a
set~$\mathcal{F}$ of partial patterns, let $\cC_n(\cf)$ denote the set
of~$n\times n$ matrices~$M$ which avoid all partial patterns in~$\mathcal{F}$.
By analogy with the usual permutation patterns, let $C_{n}(\mathcal{F}) = |\cC_n(\cf)|$.

\begin{prop}\label{prop2}
Let $\cf_1$ be a finite set of partial patterns.  Then there exists
a finite set of the usual permutation patterns~$\cf_2$, such
that $C_{n}(\mathcal{F}_1)=C_n(\mathcal{F}_2)$ for all~$n \in \nn$.
\end{prop}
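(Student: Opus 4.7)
The plan is to replace each partial pattern $L \in \mathcal{F}_1$ by the finite set of its minimal permutation completions, and to take $\mathcal{F}_2$ to be the union of these sets.

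Given a partial pattern $L$ of dimensions $a \times b$ with $p$ ones (so $a-p$ empty rows and $b-p$ empty columns), let $k := a + b - p$ and define $\mathcal{E}(L)$ to be the set of $k \times k$ permutation matrices $\omega$ for which some choice of $a$ rows and $b$ columns restricts to $L$. A short count of $1$'s shows that in any such $\omega$ the $b-p$ unchosen rows must place their $1$'s precisely in the $b-p$ empty columns of $L$, and dually for the $a-p$ unchosen columns; thus $\omega$ is determined by finitely many combinatorial choices (interleavings of chosen/unchosen rows and columns, plus bijections between empty rows and unchosen columns and between empty columns and unchosen rows), so $\mathcal{E}(L)$ is finite. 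Set $\mathcal{F}_2 := \bigcup_{L \in \mathcal{F}_1} \mathcal{E}(L)$, which is also finite.

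The key claim is that an $n \times n$ permutation matrix $M$ contains some $L \in \mathcal{F}_1$ if and only if it contains some $\omega \in \mathcal{F}_2$. The ``if'' direction is immediate from transitivity of containment, since every $\omega \in \mathcal{E}(L)$ contains $L$ by construction. For ``only if'', assume $M$ contains $L$ via rows $I = \{i_1 < \cdots < i_a\}$ and columns $J = \{j_1 < \cdots < j_b\}$. For each empty row index $\alpha$ of $L$, the unique $1$ in row $i_\alpha$ of $M$ lies in some column $j'_\alpha \notin J$ (since every entry of $L$ in row $\alpha$ is zero); symmetrically, for each empty column index $\beta$ of $L$, the unique $1$ in column $j_\beta$ of $M$ lies in some row $i'_\beta \notin I$. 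Because $M$ is a permutation matrix, the $j'_\alpha$ are pairwise distinct and the $i'_\beta$ are pairwise distinct. The submatrix of $M$ on the augmented row set $I \cup \{i'_\beta\}$ and augmented column set $J \cup \{j'_\alpha\}$ is then a $k \times k$ submatrix of $M$ which restricts to $L$ on $I \times J$ and, by a direct check, carries exactly one $1$ per row and per column; hence it is a permutation pattern in $\mathcal{E}(L)$ contained in $M$.

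Combining both directions, $M$ avoids $\mathcal{F}_1$ iff $M$ avoids $\mathcal{F}_2$, so $C_n(\mathcal{F}_1) = C_n(\mathcal{F}_2)$. The only delicate point is the ``only if'' verification that the augmented submatrix is genuinely a permutation matrix; this reduces to checking that our choices of $j'_\alpha$ and $i'_\beta$ fall outside $J$ and $I$ respectively and are internally distinct, both of which are immediate from $M$ being a permutation matrix.
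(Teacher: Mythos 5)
Your proof is correct, and it takes a genuinely different (and somewhat sharper) route than the paper. The paper sets $k = p + q$ (rows plus columns of $L$, ignoring the number of ones), defines $\mathcal{F}_2(L)$ to be \emph{all} permutation patterns of size at most $k$ that contain $L$, and argues by a downward induction / peeling argument: if $n > k$, then rank considerations force a $1$ of $M$ to lie in a row and column disjoint from the embedded copy of $L$, so deleting that row and column yields a smaller permutation matrix still containing $L$, and one iterates until size $\le k$. You instead identify the exact minimal size $a+b-p$ of a permutation completion of $L$, and rather than peel down, you build up: given an embedding of $L$ in $M$, you explicitly augment the row set by the rows absorbing the $1$'s of $L$'s empty columns, and the column set by the columns absorbing the $1$'s of $L$'s empty rows, producing in one step a permutation submatrix of the minimal size that contains $L$. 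Both approaches are valid; yours is more direct (no induction), produces a tighter forbidden set (matrices of exactly the minimal size $a+b-p$, rather than all sizes up to $a+b$), and makes the underlying combinatorics of ``completing a partial pattern to a permutation'' explicit, which would matter if one cared about the size of $\mathcal{F}_2$. The paper's version is shorter to state and suffices for the qualitative claim being made.
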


\begin{proof}
First, let us prove the result for a single partial pattern.
Let $L$ be a partial pattern of size~$p\times q$, and let $k=p+q$.
Denote by $\cp_{n}(L)$ be the set of $n\times n$ permutation matrices
containing~$L$.  Let us show by induction that
for all $n \ge k$, every permutation matrix $M\in \cp_n(L)$
contains a matrix in~$\cp_k(L)$.  Indeed, the claim is
trivially true for~$n=k$.  For larger~$n$, observe that every
$n\times n$ permutation matrix ~$M$ which contains~$L$ must
also contain some $i$-th~row and $j$-th~column, such that
$M_{p,q}=1$, and neither $i$-th~row nor $j$-th~column intersect~$L$.
This follows from the fact that otherwise \ts rank$(M) \le i+j <n$.
Deleting these row and column gives a smaller permutation matrix
which contains~$L$, proving the induction claim.

We conclude that \ts $\cf_2(L) := \cup_{\ell \le k} \cp_{\ell}(L)$ \ts
is the desired set of matrices for $\cf=\{L\}$.  In full generality,
take $\cf_2 = \cup_{L\in \cf} \cf_2(L)$.  The details are straightforward.
\end{proof}

It therefore suffices to disprove the Noonan--Zeilberger Conjecture~\ref{conj:NZT}
for partial patterns.

\begin{lem}[Main Lemma]\label{l:mainL}
Let~$\Gamma$ be a two-stack automaton. Then there exist
sets~$\mathcal{F}$ and~$\mathcal{F}^{\prime}$ of partial patterns,
and some integers $c,d\ge 1$, such that
$$
C_{cn+d}(\mathcal{F})-C_{cn+d}(\mathcal{F}^{\prime}) \. = \. G(\Gamma,n)
\hskip.15cm {\rm mod}\hskip.15cm  2\ts, \ \ \, \text{for all \ \. $n\in \nn$}\ts.
$$
\end{lem}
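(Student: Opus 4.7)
The plan is to encode length-$n$ balanced paths in $\Gamma$ as $(cn+d)\times(cn+d)$ permutation matrices that avoid a carefully designed set of partial patterns, with $c$ and $d$ depending on $\Gamma$. By Proposition~\ref{prop}, a balanced path is equivalent to the data of a word $\ella(\gamma_1)\cdots\ella(\gamma_n)$ together with a stack-non-crossing involution $\pi_\gamma$ matching each $x_i$ with its later $x_i^{-1}$, each $y_i$ with its later $y_i^{-1}$, and fixing the $\varepsilon$-positions. I would therefore split the permutation matrix into $n$ consecutive \emph{gadgets} of width $c$, plus a fixed boundary of width $d$ that anchors $v_1$ and $v_2$; the partial patterns will then be designed in four layers, each responsible for one piece of the Proposition~\ref{prop} characterization.

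The first layer of partial patterns is purely local to each gadget and forces the gadget to display one of finitely many ``cells'', each in bijection with the vertices of $\Gamma$ (recording the label $\ella(v)$ together with a small amount of bookkeeping data). The second layer acts on pairs of adjacent gadgets and forbids any label pair $(\ella(v_i),\ella(v_j))$ for which $v_i\to v_j$ is not an edge of $\Gamma$; this restricts the surviving matrices to walks from $v_1$ to~$v_2$. The third layer examines widely separated gadgets and forces each $x_i$-cell to be joined by a distinguished $1$-entry to a later $x_i^{-1}$-cell, similarly for $y_i$, and forces every $\varepsilon$-cell to be self-matched; this produces the involution $\pi_\gamma$. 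The fourth layer encodes condition~(3) of Proposition~\ref{prop} by forbidding the four-cell partial patterns corresponding to two crossing $X$-pairs, and, separately, two crossing $Y$-pairs, while leaving $X$-versus-$Y$ crossings legal, precisely as the $\sim$-relation of Section~\ref{ss:2SA-setup} demands.

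After these four layers, the elements of $\cC_{cn+d}(\cf)$ will be in bijection with pairs consisting of a balanced path together with an auxiliary configuration recording local choices of where the $1$-entries sit inside each cell and at the boundary. The auxiliary layer $\cf'$ is then chosen as a slight enlargement of $\cf$ by additional partial patterns arranged so that $\cC_{cn+d}(\cf)\setminus\cC_{cn+d}(\cf')$ admits an explicit fixed-point-free involution on all auxiliary configurations except a single designated representative per balanced path. Counting parities of fibres gives
$$
C_{cn+d}(\cf) - C_{cn+d}(\cf') \, \equiv \, G(\Gamma,n) \pmod 2\ts,
$$
as required.

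The main obstacle is the interaction of the four layers. Each individual layer is conceptually clean, but the layers constrain overlapping submatrices, so spurious configurations can arise at gadget seams, from $1$-entries that simultaneously serve several purposes, or from unintended ways of satisfying the matching condition using distant cells. Pinning down exactly which configurations survive all of $\cf$, and then engineering $\cf'$ so that the residual parity-defect is precisely $G(\Gamma,n)\pmod 2$, will require a concrete gadget design together with a detailed case analysis; this is where the lengthy construction and example in Sections~\ref{s:contruction}--\ref{s:TL} appear necessary, and where I would expect the bulk of the technical effort to lie.
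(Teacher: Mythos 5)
Your high-level skeleton matches the paper's: block decomposition into $n$ gadgets plus a fixed boundary, four layers of forbidden partial patterns targeting (local cell structure), (adjacency of gadgets), (matching/involution), and (non-crossing within a stack) — i.e., exactly the pieces of Proposition~\ref{prop} — together with a parity argument on the difference $\cC_{cn+d}(\cf)\setminus\cC_{cn+d}(\cf')$ via a sign-reversing involution. But there is a genuine gap in your premise that the layers of $\cf$ alone can force $\cC_{cn+d}(\cf)$ to ``be in bijection with balanced-path-plus-auxiliary pairs.'' Pattern avoidance can never force the \emph{presence} of structure. For example, the identity permutation matrix of size $cn+d$ (or any permutation with no occurrence of your gadget cells whatsoever) trivially avoids any reasonable set of nontrivial partial patterns, so $\cC_{cn+d}(\cf)$ inevitably contains a lot of unstructured matrices no matter how the gadgets are designed. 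Layer~1 of your plan, as stated, cannot do what you ask of it.

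The paper circumvents this precisely by pushing all structure-forcing into the subtraction and the involution. It fixes a twin pair of blocks $B,B'$ from the alphabet that are treated symmetrically by $\cf=\cf_1\cup\dots\cup\cf_5$ but are added as forbidden patterns in $\cf'=\cf\cup\{B,B'\}$. The difference set $\cd_n$ is then exactly the $\cf$-avoiders containing at least one $B$ or $B'$, which discards the identity and all other junk in one stroke. The involution $\phi$ flips the leftmost ``unblocked'' $B\leftrightarrow B'$; it cancels every matrix whose $B/B'$ skeleton is malformed, and its fixed points are exactly those matrices where $B'$ is absent and every $B$ is the marked block of a pattern in $\cf_4'$. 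Combined with the anchoring constraints on $P$ and $Q$ in $\cf_2,\cf_3$ — which your proposal also omits — this rigidity cascades to pin down the whole matrix as an $M(\gamma,\pi_\gamma)$ for a balanced $\gamma$. In short, the presence-forcing you wanted from $\cf$ alone is in fact produced by the interaction of $\cf'\setminus\cf=\{B,B'\}$, the leftmost-$B$-switch involution, and the marked-submatrix conditions, and without identifying this mechanism your plan would not get off the ground.
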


The proof of the Main Lemma is given in Section~\ref{s:MLP}.

\subsection{Proof of Theorem~\ref{t:NZT}}\label{ss:proof-main-thm-proof-NZT}
By Lemma~\ref{l:NZL}, there exists a two-stack automaton~$\Gamma_1$,
such that the infinite binary word $\oa=(\al_1\al_2\ldots)$ given by
$\al_n=G(\Gamma_1,n)$,
contains every finite binary word as a subword. By Lemma~\ref{l:mainL},
there exist integers~$c$ and~$d$ and two sets~$\mathcal{F}$
and~$\mathcal{F}^{\prime}$ of partial patterns such that
$C_{cn+d}(\mathcal{F})-C_{cn+d}(\mathcal{F}^{\prime})=
G(\Gamma_1,n) \hskip.15cm{\rm mod}\hskip.15cm  2$, for all~$n$.

If Conjecture~\ref{conj:NZT} is true, then both $\{C_n(\mathcal{F})\}$
and~$\{C_n(\mathcal{F}^{\prime})\}$ are P-recursive sequences.
Since P-recursive sequences are closed under taking the
differences and subsequences with indices in arithmetic progressions
(see e.g.~\cite[$\S$6.4]{Sta}),
this means that the sequence $\{a_n\}$, defined as \ts
$a_n=\{C_{cn+d}(\mathcal{F})-C_{cn+d}(\mathcal{F}^{\prime})\}$,
is also P-recursive.  On the other hand, from above, we have
$\al_n= a_n\hskip.15cm{\rm mod}\hskip.15cm 2$.
This gives a contradiction with Theorem~\ref{l:mod2}.

The second part of the theorem requires a quantitative form of the Main Lemma
and is given as Corollary~\ref{c:count}.  \qed

\bigskip

\section{The construction of an automaton in the Main Lemma}\label{s:contruction}

\subsection{Notation} \label{ss:construction-notation}
The construction of sets of matrices $\cf, \cf'$
has two layers and is quite involved, so
we try to simplify it by choosing a clear notation.
We use $\ca_g$ to denote a certain subset of $g\times g$ matrices,
which we call an \emph{alphabet} and use as building blocks.
We use English capitals with various decorations, notably
$A,A',B,B',E,L,P,Q,R,S,T_k$ and~$Z_p$, to
denote \ts $0$-$1$~matrices of size at most $g\times g$.
We use script capital letters \ts $\cf_i,\cf_i',\cw_i, \cw_i',$ \ts
to denote the sets of larger matrices (partial patterns) which form sets
$\cf,\cf'$. Each is of size at most $8\ts g\times 8\ts g$, and some of the
matrices are denoted $W_i$ and~$W_i'$.

On a bigger scale, we use $M=M(\ast,\ast)$
to denote large block matrices, with individual blocks $M^{i,j}$
being either zero or matrices in the alphabet~$\ca_g$.
For the proof of Theorem~\ref{t:NZT} we take $g=10$, but for
theorems~\ref{t:undecide} and~\ref{t:wilfian} we need larger~$g$.
When writing matrices, we use a dot $(\ts\cdot\ts{})$ within a matrix to represent
a single~0 entry, and a circle~$(\ts\circ\ts{})$ to represent a $g\times g$
submatrix of zeros.

\subsection{The alphabet}
A permutation matrix is called {\em simple} if it contains no permutation
matrix as a proper submatrix consisting of consecutive rows and columns,
other than the trivial $1\times 1$ permutation matrix.

Define an alphabet~$\mathcal{A}_g$ of all~$g\times g$ simple permutation
matrices which contain the following matrix as a submatrix:
{\small
$$
L\. = \. \left(
\arraycolsep=1pt\def\arraystretch{.8}\begin{array}{*{7}c}
\cdot & \cdot & \cdot & \cdot & 1 & \cdot & \cdot \\
\cdot & \cdot & \cdot & \cdot & \cdot & 1 & \cdot \\
\cdot & \cdot & \cdot & \cdot & \cdot & \cdot & 1 \\
\cdot & \cdot & \cdot & 1 & \cdot & \cdot & \cdot \\
1 & \cdot & \cdot & \cdot & \cdot & \cdot & \cdot \\
\cdot & 1 & \cdot & \cdot & \cdot & \cdot & \cdot \\
\cdot & \cdot & 1& \cdot & \cdot & \cdot & \cdot
\end{array}\right).
$$
}

\begin{prop}
We have: \. $|\ca_g|\.\to\. g!/e^{2}$ as $g \to \infty$.
\end{prop}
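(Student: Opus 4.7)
The plan is to sandwich $|\ca_g|$ between two quantities with known asymptotics so that the result falls out by subtraction.

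For the upper bound, I would invoke the classical asymptotic of Albert, Atkinson and Klazar: the total number of simple permutations in $S_g$ is $\sim g!/e^{2}$. Since every element of $\ca_g$ is in particular a simple permutation matrix, this gives the free bound $|\ca_g| \le g!/e^{2}\,(1+o(1))$.

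For the matching lower bound, I would control the \emph{deficit}, namely the number of simple permutations of size~$g$ that \emph{avoid} the pattern $L$. The key observation is that $L$ is a genuine $7\times 7$ permutation matrix (its $1$'s occupy columns $5,6,7,4,1,2,3$ of rows $1,\ldots,7$), hence a single permutation pattern rather than an honest partial pattern. Consequently, the Marcus-Tardos theorem (formerly the Stanley-Wilf conjecture) applies as a black box and gives $C_g(L) \le c^{\,g}$ for some constant $c=c(L)$. In particular, the count of simple permutations of size~$g$ avoiding $L$ is at most $c^{\,g} = o(g!/e^{2})$, and subtracting from the Albert-Atkinson-Klazar asymptotic yields $|\ca_g| \ge g!/e^{2}\,(1+o(1))$.

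Combining the two bounds gives $|\ca_g|\sim g!/e^{2}$. There is no real obstacle here: the proof is essentially a two-line synthesis of two well-established deep results. The only point worth checking is that $L$ really is a $7\times 7$ permutation matrix, so that Marcus-Tardos applies directly; any alternative strategy (say, a second-moment argument showing that a uniformly random simple permutation contains $L$ with high probability) would be substantially heavier and seems unnecessary given the exponential slack between $c^{\,g}$ and $g!/e^{2}$.
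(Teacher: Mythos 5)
Your proof is correct and follows essentially the same route as the paper's: both arguments combine the Albert--Atkinson--Klazar asymptotic (a random permutation of size $g$ is simple with probability tending to $1/e^{2}$) with the observation that avoiding the fixed $7\times 7$ pattern $L$ is an asymptotically negligible event. The only difference is cosmetic: the paper simply asserts that the probability of avoiding $L$ tends to zero, while you make this explicit by invoking the Marcus--Tardos exponential bound to get $c^{g}=o(g!/e^{2})$ and then subtract; this is a valid and somewhat more self-contained way to justify the same step.
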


\begin{proof}
It was shown in~\cite{AAK} that the probability that a random $g\times g$
permutation matrix~$M$ is simple tends to $1/e^{2}$ as $g \to \infty$
(see also~\cite[A111111]{OEIS}). On the other hand, the probability
that $M$ avoids~$L$ tends to~$0$ as $g \to \infty$. Thus, the probability
that $M\in \mathcal{A}_g$ tends to~$1/e^2$, as desired. \end{proof}

By the proposition, we can fix an integer~$g$ large enough that
$|\mathcal{A}_g|> 5+m+r$, where $m$ is the number of vertices in
$\Gamma$ and $r$ is the number of distinct labels in $X\cup Y$
on vertices of~$\Gamma$.

\smallskip

We build our forbidden partial patterns out of elements of~$\mathcal{A}_g$
as follows.
Choose five special matrices $P,Q,B,B^\prime,E\in \mathcal{A}_g$,
as well as two classes of matrices,~$T_1,\ldots,T_{m}\in \mathcal{A}_g$,
and~${Z_1,\ldots, Z_r\in \mathcal{A}_g}$.  Here the matrices
$T_1,\ldots, T_m$ represent the~$m$ vertices in~$\Gamma$.
Let~$T_i$ denote the matrix corresponding to~$v_i$. The matrices
$Z_1,\dots,Z_r$ represent the~$r$ labels in $X\cup Y$.
Let~$s(Z_p)$ denote the label which corresponds to~$Z_p$.
Let us emphasize that these choices are arbitrary as the only important
properties of these $(5+m+r)$ matrices is that they are all in
$\mathcal{A}_g$ and distinct.

\subsection{Forbidden matrices}
Let~$\mathcal{F}_1$ denote the set of all $g\times (g+1)$ or $(g+1)\times g$
partial patterns formed by taking a matrix~$A$ in~$\mathcal{A}_g$,
and inserting a row or column of all zeros somewhere in the middle of~$A$.

Let~$\mathcal{F}_2$ denote the set of all $(2g+1)\times (5g+1)$ or $(5g+1)\times (2g+1)$
partial patterns whose bottom left $g\times g$ consecutive submatrix is
a $B$ or $B^\prime$ and whose top right $g\times g$ consecutive submatrix
is $T_j$ for some~$j$.

Let~$\mathcal{F}_3$ denote the four element set consisting of the $(2g+1)\times g$
and $g\times (2g+1)$ partial pattern formed by inserting $g+1$ rows of zeros below~$Q$,
inserting $g+1$ rows of zeros above $P$,  inserting $g+1$ columns of zeros to
the right of~$Q$, or inserting $g+1$ columns of zeros to the left of~$P$.

Let \ts $\mathcal{F}_4 = \mathcal{W}_1\cup\mathcal{W}_2\cup\mathcal{W}_3\cup\mathcal{W}_4\cup\mathcal{W}_5$\ts,
where $\cw_i$ are defined as follows.  Let $\mathcal{W}_1$, $\mathcal{W}_2$ and $\mathcal{W}_3$ denote the sets of
matrices of the form $W_1$, $W_2$ and $W_3$, respectively:
{\small
\vskip-.1cm
$$
{W_1 = \left(
\arraycolsep=1pt\def\arraystretch{.8}\begin{array}{*{8}c}
\circ&\circ&T_i&\circ&\circ&\circ&\circ&\circ\\
\circ&\circ&\circ&\circ&\circ&T_j&\circ&\circ\\
L&\circ&\circ&\circ&\circ&\circ&\circ&\circ\\
\circ&\circ&\circ&\circ&\circ&\circ&\circ&Z_p\\
\circ&\circ&\circ&\circ&\circ&\circ&T_k&\circ\\
\circ&B^\prime&\circ&\circ&\circ&\circ&\circ&\circ\\
\circ&\circ&\circ&\circ&R&\circ&\circ&\circ\\
\circ&\circ&\circ&Z_p&\circ&\circ&\circ&\circ
\end{array}\right), \ 
W_2 = \left(
\arraycolsep=1pt\def\arraystretch{.8}\begin{array}{*{8}c}
\circ&\circ&\circ&\circ&Z_p&\circ&\circ&\circ\\
\circ&\circ&\circ&T_i&\circ&\circ&\circ&\circ\\
\circ&\circ&\circ&\circ&\circ&\circ&T_j&\circ\\
\circ&L&\circ&\circ&\circ&\circ&\circ&\circ\\
Z_p&\circ&\circ&\circ&\circ&\circ&\circ&\circ\\
\circ&\circ&\circ&\circ&\circ&\circ&\circ&T_k\\
\circ&\circ&B^\prime&\circ&\circ&\circ&\circ&\circ\\
\circ&\circ&\circ&\circ&\circ&R&\circ&\circ
\end{array}\right), \ 
W_3 =\left(
\arraycolsep=1pt\def\arraystretch{.8}\begin{array}{*{7}c}
\circ&\circ&T_i&\circ&\circ&\circ&\circ\\
\circ&\circ&\circ&\circ&\circ&T_j&\circ\\
L&\circ&\circ&\circ&\circ&\circ&\circ\\
\circ&\circ&\circ&E&\circ&\circ&\circ\\
\circ&\circ&\circ&\circ&\circ&\circ&T_k\\
\circ&B^\prime&\circ&\circ&\circ&\circ&\circ\\
\circ&\circ&\circ&\circ&R&\circ&\circ
\end{array}\right)}.
$$
}

\nin
In all three cases, we require $L,R\in\{B,B^\prime\}$ and~$v_i\rightarrow v_j\rightarrow v_k$.
In $W_1$, we require $\ella(v_j)=s(Z_p)$. In $W_2$, we require $\ella(v_j)=(s(Z_p))^{-1}$.
In $W_3,$ we require $\ella(v_j)=\varepsilon$.

Similarly, let $\mathcal{W}_4$ and $\mathcal{W}_5$ denote the set of all matrices
of the form $W_4$ and $W_5$ respectively:
{\small
$$
{ W_4=\left(
\arraycolsep=2pt\def\arraystretch{.8}\begin{array}{*{6}c}
\circ&\circ&\circ&\circ&T_1&\circ\\
\circ&P&\circ&\circ&\circ&\circ\\
\circ&\circ&E&\circ&\circ&\circ\\
\circ&\circ&\circ&\circ&\circ&T_k\\
B^\prime&\circ&\circ&\circ&\circ&\circ\\
\circ&\circ&\circ&R&\circ&\circ
\end{array}\right)\hskip-.05cm,\hskip.2cm W_5=\left(
\arraycolsep=2pt\def\arraystretch{.8}\begin{array}{*{6}c}
\circ&\circ&T_i&\circ&\circ&\circ\\
\circ&\circ&\circ&\circ&\circ&T_2\\
L&\circ&\circ&\circ&\circ&\circ\\
\circ&\circ&\circ&E&\circ&\circ\\
\circ&\circ&\circ&\circ&Q&\circ\\
\circ&B^\prime&\circ&\circ&\circ&\circ
\end{array}\right)}.
$$
}

\nin
Here, in $W_4$, we require $R\in\{B,B^\prime\}$ and $v_1\rightarrow v_k$,
and in $W_5,$ we require $L\in\{B,B^\prime\}$ and $v_i\rightarrow v_2$.
Finally, let~$\mathcal{F}_5$ denote the set of all patterns of the form
{\small
$$
{\left(
\arraycolsep=2pt\def\arraystretch{.8}\begin{array}{*{3}c}
\circ&Z_p&\circ\\
\circ&\circ&Z_q\\
B&\circ&\circ
\end{array}\right)\.,\hskip.3cm
\left(\arraycolsep=2pt\def\arraystretch{.8}\begin{array}{*{3}c}
\circ&Z_p&\circ\\
\circ&\circ&Z_q\\
B^\prime&\circ&\circ
\end{array}\right)}\hskip.3cm \mbox{\ns or}\hskip.3cm
{\left(
\arraycolsep=2pt\def\arraystretch{.8}\begin{array}{*{3}c}
\circ&\circ&T_j\\
Z_p&\circ&\circ\\
\circ&Z_q&\circ
\end{array}\right)}\., \ \ \text{\ns where~$s(Z_p) \sim s(Z_q)$\ts.}
$$
}

\smallskip

\begin{lem}[Explicit Construction]\label{l:mainL2}
Given a two-stack automaton~$\Gamma$, let
$$
{\mathcal{F}:=\mathcal{F}_1\cup \mathcal{F}_2\cup \mathcal{F}_3 \cup \mathcal{F}_4\cup \mathcal{F}_5} \quad
\text{and} \quad \mathcal{F}^\prime:=\mathcal{F}\cup\{B,B^\prime\}\ts,
$$
where $\cf_1,\ldots,\cf_5,B,B'$ are defined as above. Then, for all $n$, we have:
$$
C_{\kK}(\mathcal{F})-C_{\kK}(\mathcal{F}^{\prime})=G(\Gamma,n) \hskip.15cm {\rm mod}\hskip.15cm  2, \ \ \text{where $\kK=(3n+2)g$\ts.}
$$
\end{lem}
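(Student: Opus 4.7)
The plan is to exhibit a parity-preserving correspondence between balanced paths of length $n$ in $\Gamma$ and those permutation matrices $M$ of size $m \times m$, $m = (3n+2)g$, which avoid $\mathcal{F}$ yet contain $B$ or $B^\prime$ as a submatrix. Since $\mathcal{F}^\prime = \mathcal{F} \cup \{B, B^\prime\}$, the difference $C_{m}(\mathcal{F}) - C_{m}(\mathcal{F}^\prime)$ counts exactly these matrices, so the lemma reduces to showing that this count is congruent to $G(\Gamma, n)$ modulo $2$.

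First I would use $\mathcal{F}_1$ to extract a block decomposition. If $A \in \mathcal{A}_g$ occurs as a submatrix of the permutation matrix $M$, then since each of the $g$ columns of $A$ already contains its unique $1$ inside a row of $A$, any row strictly between two rows of $A$ is zero on the columns of $A$, and symmetrically for columns. Such a row or column gap immediately yields a pattern in $\mathcal{F}_1$, so each alphabet occurrence inside $M$ must use $g$ consecutive rows and $g$ consecutive columns. Using the simplicity of letters in $\mathcal{A}_g$ together with the forced occurrence of $L$ inside every letter, one then verifies that these consecutive occurrences align to a single $(3n+2) \times (3n+2)$ grid of $g \times g$ blocks. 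Row/column sum considerations then force that every such $M$ is encoded by a permutation $\pi \in S_{3n+2}$ of block positions together with an assignment of an alphabet letter $A_i \in \mathcal{A}_g$ to each nonzero block.

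Next I would analyze how each remaining family $\mathcal{F}_2, \mathcal{F}_3, \mathcal{F}_4, \mathcal{F}_5$ constrains this block encoding. The family $\mathcal{F}_3$ pins the letters $P$ and $Q$ into the boundary rows and columns of the grid, accounting for the extra $+2$ in $3n+2$ as initial and final markers. The family $\mathcal{F}_2$ restricts the relative positions of $T_j$ blocks against $B, B^\prime$ anchors. The patterns $W_1, W_2, W_3 \in \mathcal{F}_4$ encode local three-step transitions $v_i \to v_j \to v_k$ of $\Gamma$ paired with the three cases $\ella(v_j) \in \{s(Z_p), s(Z_p)^{-1}, \varepsilon\}$ for the label of the middle vertex: each such forbidden pattern rules out a triple $T_i, T_j, T_k$ flanked by the given $B^\prime$ and $R \in \{B, B^\prime\}$ anchors whenever it would be inconsistent with an actual edge traversal or label matching of $\Gamma$, while $W_4, W_5$ enforce the analogous boundary conditions at $v_1$ and $v_2$. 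Finally, $\mathcal{F}_5$ forbids the three "crossing" configurations of two label blocks $Z_p, Z_q$ with $s(Z_p) \sim s(Z_q)$, which is precisely condition~(3) of Proposition~\ref{prop}. The cumulative effect is that each $M \in \mathcal{C}_m(\mathcal{F}) \setminus \mathcal{C}_m(\mathcal{F}^\prime)$ corresponds to a vertex sequence $\gamma_1, \ldots, \gamma_n$ with $\gamma_1 = v_1$, $\gamma_n = v_2$, together with a noncrossing matching of label blocks --- that is, to a balanced path of length $n$ in $\Gamma$ --- modulo a free choice of $B$ versus $B^\prime$ at the anchor positions.

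The final step is a parity argument in which the free $B \leftrightarrow B^\prime$ swap at anchor positions defines a $\mathbb{Z}/2$-involution on the valid matrices. Any matrix with two or more independently swappable anchors comes in pairs and contributes $0$ modulo $2$, while each balanced path produces a unique matrix (or an odd-sized orbit under the involution) contributing $1$, and summing yields the claimed congruence. The hardest part will be the Step~2 case analysis: each of the five families must be shown to enforce exactly its intended structural constraint, ruling out any "parasitic" valid matrix that avoids $\mathcal{F}$ but is not encoded by any path; dually, one must check that every balanced path does produce an $M$ avoiding all of $\mathcal{F}$. The parity step is likewise delicate, because $B^\prime$ plays a distinguished role in the fixed positions of $W_1, \ldots, W_5$ while $B$ does not, so one must verify case by case that the swap really is an involution on $\mathcal{C}_m(\mathcal{F})$, rather than accidentally creating a new forbidden pattern.
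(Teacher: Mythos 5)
Your strategy matches the paper's at every structural step: $C_{\kK}(\cf) - C_{\kK}(\cf')$ counts $\cd_n := \cC_{\kK}(\cf) \sm \cC_{\kK}(\cf')$; $\cf_1$ forces every alphabet letter to occupy a full $g\times g$ block, yielding a $(3n+2)\times(3n+2)$ block grid; $\cf_2,\ldots,\cf_5$ pin down $P$, $Q$, the diagonal of $B$'s and $T_j$'s, and the noncrossing condition; and a $B\leftrightarrow B'$ toggle supplies the parity argument. So the overall architecture is exactly the paper's. But the toggle as you describe it is not yet well-defined, and the missing device is the heart of the argument. ``The free $B\leftrightarrow B'$ swap at anchor positions'' does not say \emph{which} anchor is swapped, and swapping an arbitrary $B$ to $B'$ (or back) can introduce or destroy a pattern from $\cf_4$, taking the matrix out of $\cd_n$. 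The paper's fix is to call an anchor \emph{blocked} if replacing it would leave $\cd_n$, and to define $\phi$ as the map that swaps the \emph{leftmost unblocked} anchor. Showing this is a genuine involution takes its own lemma (one must check that after the swap, the same anchor is again the leftmost unblocked one), and its fixed points come out to be precisely the matrices that avoid $B'$ and in which every occurrence of $B$ is the distinguished marked submatrix of some pattern in the modified family $\cf_4'$ with $L=R=B$. Those fixed points --- not arbitrary ``unique matrices'' --- are then shown in two further lemmas to be exactly the matrices $M(\gamma,\pi_\gamma)$ for balanced paths $\gamma$.

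One more small error worth flagging: your claim that ``any matrix with two or more independently swappable anchors comes in pairs and contributes $0$ modulo $2$'' misstates the threshold. Under $\phi$, a matrix with even \emph{one} unblocked anchor is paired with its image, so the fixed points are the matrices with \emph{no} unblocked anchor at all; the ``two or more'' framing (and the ``odd-sized orbit'' language) suggests you are picturing a $(\mathbb Z/2)^k$ action on the $k$ anchors rather than a single deterministic involution, and that picture does not directly give a bijection with balanced paths.
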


The Main Lemma~\ref{l:mainL} follows immediately from this result.

\subsection{Counting Partial Patterns}
We will now analyze the above construction in the specific case of~$\Gamma_1$.

\begin{thm}\label{t:count}
There exists a set $\mathcal{F}$ of at most $6854$ partial patterns
of size at most $80\times 80$ such that $\{C_n(\mathcal{F})\}$
is not P-recursive.
\end{thm}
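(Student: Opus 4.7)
The plan is to specialize Lemma~\ref{l:mainL2} to the automaton $\Gamma_1$ from Lemma~\ref{l:NZL}, with $g=10$, and verify the quantitative bounds by direct counting.

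First, I would read off the parameters of $\Gamma_1$ from Figure~\ref{gamma2}: inspection gives $m=31$ vertices (eight $\varepsilon$-labelled $s_1,\ldots,s_8$, nine $p_0,\ldots,p_8$, and fourteen $v_1,\ldots,v_{14}$) and $r=6$ distinct labels $x_0,x_1,x_2,y_0,y_1,y_2$ in $X\cup Y$. For $g=10$, the proposition in Section~\ref{s:contruction} gives $|\mathcal{A}_{10}|\approx 10!/e^2 \gg 5+m+r = 42$, so the construction of Section~\ref{s:contruction} applies: choose distinct alphabet letters $P,Q,B,B',E,T_1,\ldots,T_{31},Z_1,\ldots,Z_6\in\mathcal{A}_{10}$ and form $\mathcal{F}=\mathcal{F}_1\cup\cdots\cup\mathcal{F}_5$ and $\mathcal{F}'=\mathcal{F}\cup\{B,B'\}$. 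The biggest partial patterns produced are the elements of $\mathcal{W}_1$ and $\mathcal{W}_2$, which are $8\times 8$ block matrices with $g\times g$ blocks, so every pattern has size at most $8g\times 8g = 80\times 80$.

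Second, I would bound $|\mathcal{F}|$ piece by piece. For $\mathcal{F}_1$, although the definition quantifies over all of $\mathcal{A}_g$, the same proof works if we restrict to the $42$ distinguished letters (since no other letter ever appears as a consecutive block in a valid matrix counted by $C_{(3n+2)g}(\mathcal{F})$), giving $|\mathcal{F}_1|\le 42\cdot 2(g-1)=756$. Direct enumeration gives $|\mathcal{F}_2|=4m=124$ (two orientations, $B$ vs.\ $B'$, and $m$ choices of $T_j$), $|\mathcal{F}_3|=4$, and $|\mathcal{F}_5|=(2+m)\cdot 18=594$, where $18=r_X^2+r_Y^2$ counts ordered pairs $(Z_p,Z_q)$ with $s(Z_p)\sim s(Z_q)$. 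For $\mathcal{F}_4=\mathcal{W}_1\cup\cdots\cup\mathcal{W}_5$, each of $\mathcal{W}_1,\mathcal{W}_2,\mathcal{W}_3$ is indexed by the directed 2-paths in $\Gamma_1$ stratified by the label type of the middle vertex, times $4$ for $(L,R)\in\{B,B'\}^2$, while $\mathcal{W}_4$ and $\mathcal{W}_5$ are indexed by edges incident to $v_1=s_1$ or $v_2=s_8$ with one binary parameter. Because $\Gamma_1$ is sparse, tabulating $\sum_{v}\deg^-(v)\deg^+(v)$ stratified by label keeps $|\mathcal{F}_4|$ in the low hundreds, and the grand total $|\mathcal{F}'|=|\mathcal{F}|+2$ stays well below $6854$.

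Finally, I conclude exactly as in Section~\ref{ss:proof-main-thm-proof-NZT}. By Lemma~\ref{l:mainL2},
$$
C_{(3n+2)g}(\mathcal{F}) \. - \. C_{(3n+2)g}(\mathcal{F}') \ \equiv \ G(\Gamma_1, n) \hskip.15cm {\rm mod}\hskip.15cm 2\ts.
$$
If both $\{C_n(\mathcal{F})\}$ and $\{C_n(\mathcal{F}')\}$ were P-recursive, their difference on the arithmetic progression $n\mapsto (3n+2)g$ would also be P-recursive, and reducing mod~$2$ would give a P-recursive-mod-$2$ sequence equal to $(G(\Gamma_1,n)\ {\rm mod}\ 2)$, contradicting Lemma~\ref{l:NZL} via Theorem~\ref{l:mod2}. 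Hence at least one of $\mathcal{F},\mathcal{F}'$ is the desired witness, with at most $6854$ partial patterns of size at most $80\times 80$. The main obstacle is justifying the restriction of $\mathcal{F}_1$ to the $42$ distinguished letters — a structural claim that requires tracing through the proof of Lemma~\ref{l:mainL2} in Section~\ref{s:MLP} to confirm that insertions into other elements of $\mathcal{A}_g$ are never appealed to; everything else is routine enumeration once the degree sequence of $\Gamma_1$ is tabulated.
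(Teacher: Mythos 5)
Your general plan — specialize Lemma~\ref{l:mainL2} to $\Gamma_1$ with $g=10$, verify that each forbidden matrix fits inside $8g\times 8g = 80\times 80$, and then count $|\mathcal{F}|$ directly — is indeed the paper's approach, and several of your sub-counts ($|\mathcal{F}_1|=756$, $|\mathcal{F}_3|=4$, $|\mathcal{F}_5|=594$) agree with the paper's. But your count of $|\mathcal{F}_2|$ is wrong as stated, and this is not a harmless optimization: you quietly change the definition of $\mathcal{F}_2$ without noticing you are doing so. The paper defines $\mathcal{F}_2$ to be the set of \emph{all} $(2g+1)\times(5g+1)$ or $(5g+1)\times(2g+1)$ partial patterns with $B$ or $B'$ in the bottom-left $g\times g$ corner and some $T_j$ in the top-right $g\times g$ corner; a partial pattern is free to place a single $1$ anywhere in the remaining middle row/column region. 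The paper therefore counts a factor that enumerates the possible positions of that middle entry (the paper writes ``$41$ entries in the middle, satisfied in $42$ ways'', giving $|\mathcal{F}_2| = 2\cdot 31\cdot 2\cdot 42 = 5208$). Your $|\mathcal{F}_2| = 4m = 124$ drops this factor entirely, which amounts to replacing $\mathcal{F}_2$ by the subfamily of patterns whose middle is all zero.

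The conclusion ``$|\mathcal{F}|\le 6854$'' can survive this change only if the zero-middle subfamily already enforces the same structural rigidity in the proof of Lemma~\ref{TL2}, namely that every $B$ block forces a $T_j$ block at distance exactly $4g$. That claim is in fact true, but it requires an argument: given a $B$ block and a $T_j$ block too far apart in a permutation matrix, one must show that a middle row and $3g+1$ middle columns can be chosen so that the unique $1$ in that middle row misses all the chosen columns (possible because there are at least $4g$ available middle columns and only one to avoid). You notice this very issue for $\mathcal{F}_1$ — correctly flagging that restricting to the $42$ used alphabet letters requires tracing through Section~\ref{s:MLP} — but you do not apply the same scrutiny to $\mathcal{F}_2$, presenting $124$ as a ``direct enumeration'' of the paper's definition when it is actually an enumeration of a smaller, modified set. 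As written your count is simply incorrect relative to the paper's $\mathcal{F}_2$; if instead you intend the modified $\mathcal{F}_2$, the proof of Lemma~\ref{l:mainL2} must be re-examined, and you do not do so. The remaining vagueness about $|\mathcal{F}_4|$ (``low hundreds'' versus the paper's precise $292$, obtained from $71$ length-$2$ paths in $\Gamma_1$ times $4$ choices of $(L,R)$ plus $4+4$ for $\mathcal{W}_4,\mathcal{W}_5$) is less serious but should also be made concrete if one wants the stated $6854$.
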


Converting these partial patterns into the usual permutation
patterns would require many more patterns.
However all the patterns avoided would still
be size at most $80\times 80$.

\begin{cor}\label{c:count}
There exists a set $\mathcal{F}$ of $80\times 80$ permutation
matrices such that the sequence $\{C_n(\mathcal{F})\}$ is not P-recursive.
In particular, $|\mathcal{F}| < 80! < 10^{119}$.
\end{cor}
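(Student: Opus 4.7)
The plan is to lift the set of partial patterns produced by Theorem~\ref{t:count} to a family of honest $80 \times 80$ permutation matrices yielding the same avoidance count for all large~$n$. First I would invoke Theorem~\ref{t:count} to obtain a set $\mathcal{F}_0$ of at most $6854$ partial patterns of size at most $80 \times 80$ with $\{C_n(\mathcal{F}_0)\}$ non-P-recursive. Inspecting the construction in Section~\ref{s:contruction}, every $L \in \mathcal{F}_0$ is either a small non-square partial pattern from $\mathcal{F}_1 \cup \mathcal{F}_2 \cup \mathcal{F}_3$ with $p + q \le 72$, or a block matrix from $\mathcal{F}_4 \cup \mathcal{F}_5$ that is in fact itself a permutation matrix of size $30$, $60$, $70$, or $80$ (each of $W_1,\ldots,W_5$ has exactly one nonzero $g \times g$ block per block-row and per block-column, and each such block lies in $\ca_g$).

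Next, to the non-square patterns I would apply Proposition~\ref{prop2}, obtaining an equivalent finite set of permutation patterns each of size at most $p + q \le 72$. For the block permutation matrices nothing additional is needed: if $L$ is already a permutation matrix, then partial-pattern containment of $L$ coincides with permutation-pattern containment of $L$, since any $k$ rows and $k$ columns of a permutation matrix whose $k \times k$ intersection is a permutation matrix necessarily form a matched row/column pair. This yields a finite family $\mathcal{F}_1$ of permutation patterns of size at most $80$, with $C_n(\mathcal{F}_1) = C_n(\mathcal{F}_0)$ for all~$n$. To equalize sizes, for each $\pi \in \mathcal{F}_1$ of size $k \le 80$ I would form $\cw(\pi) := \{\tau \in S_{80} : \tau \text{ contains } \pi\}$ and set $\mathcal{F} := \bigcup_{\pi \in \mathcal{F}_1} \cw(\pi)$. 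An easy check (using that any $\pi$-matching $k$-subset inside $\sigma \in S_n$ with $n \ge 80$ can be enlarged to an $80$-subset whose relative order lies in $\cw(\pi)$) shows $C_n(\mathcal{F}) = C_n(\mathcal{F}_1)$ for all $n \ge 80$, so $\{C_n(\mathcal{F})\}$ is not P-recursive since P-recursiveness is determined by the tail.

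Finally, for the cardinality bound, trivially $|\mathcal{F}| \le |S_{80}| = 80!$, and strictness holds because $\mathcal{F} = S_{80}$ would force $C_n(\mathcal{F}) = 0$ for $n \ge 80$, which is trivially P-recursive. The only conceptually nontrivial point is that the largest patterns in the construction, namely $W_1, W_2 \in \mathcal{W}_1 \cup \mathcal{W}_2$ of size $80 \times 80$, are already genuine permutation matrices, so Proposition~\ref{prop2} need not be applied to them (which would otherwise push the permutation pattern size up to $160 \times 160$, ruining the bound). Everything else is routine size bookkeeping.
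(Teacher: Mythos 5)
Your proof is correct and fills in exactly the argument the paper sketches but does not formally write out (the two sentences preceding Corollary~\ref{c:count}). You correctly identify the crucial observation that makes the $80\times 80$ bound survive the conversion: the partial patterns in $\mathcal{F}_4\cup\mathcal{F}_5$ are already honest permutation matrices (one $\mathcal{A}_g$-block per block-row and per block-column), so Proposition~\ref{prop2} only needs to be applied to $\mathcal{F}_1\cup\mathcal{F}_2\cup\mathcal{F}_3$, whose shapes satisfy $p+q\le 72$; the padding-up to $S_{80}$ and the tail argument are routine and correct.
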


\begin{proof}[Proof of Theorem~\ref{t:count}]
Observe that $\Gamma_1$ has $31$ vertices and uses $6$ labels in $X\cup Y$.
Therefore we need $5+31+6=42$ matrices in $\mathcal{A}_g$.
Let $g=10$. Consider the following simple $9\times 9$ pattern
{\small
$$L^\prime=\left(
\arraycolsep=1pt\def\arraystretch{.6}\begin{array}{*{9}c}
\cdot &\cdot & \cdot & \cdot & \cdot & 1 & \cdot & \cdot&\cdot  \\
\cdot  & 1&\cdot & \cdot & \cdot & \cdot & \cdot & \cdot&\cdot  \\
\cdot &\cdot & \cdot & \cdot & \cdot & \cdot & 1 & \cdot&\cdot  \\
\cdot &\cdot & \cdot & \cdot & \cdot & \cdot & \cdot &\cdot&1  \\
\cdot &\cdot & \cdot & \cdot & 1 & \cdot & \cdot & \cdot &\cdot \\
1 & \cdot & \cdot & \cdot & \cdot & \cdot & \cdot&\cdot&\cdot   \\
\cdot &\cdot  & 1 & \cdot & \cdot & \cdot & \cdot & \cdot&\cdot \\
\cdot &\cdot &\cdot & \cdot & \cdot & \cdot & \cdot & 1 &  \cdot \\
\cdot &\cdot & \cdot & 1& \cdot & \cdot & \cdot & \cdot&\cdot
\end{array}\right).
$$
}

Note that there are 60 ways to insert 1 into $L^\prime$ to form a simple
$10\times 10$ pattern. Indeed, the 1 may inserted anywhere other than
the 4 corners or the 36 locations that would form a $2\times 2$
consecutive submatrix. All 60 of these $10\times 10$ are distinct
and in $\mathcal{A}_g$.

For $\mathcal{F}_1$, we actually only need to include the 42 matrices
in $\mathcal{A}_{10}$ which are actually used, so
$|\mathcal{F}_1|=42\cdot 9\cdot 2=756$.  Similarly,
for $\mathcal{F}_2$, there are $2$ choices for the bottom left
$10\times 10$ consecutive submatrix and $31$ choices for the top
right $10\times 10$ consecutive submatrix. There are $41$ entries
in the middle and at most one of them can be a $1$, which can be
satisfied in $42$ ways. Therefore,
$|\mathcal{F}_2|=2\cdot 31\cdot 2\cdot 42=5208$.
Clearly, $|\mathcal{F}_3|=4$.

Let us show that $|\cf_4|=292$.  Indeed, a matrix in
$\mathcal{W}_1\cup\mathcal{W}_2\cup\mathcal{W}_3$
is defined by the path $v_i\rightarrow v_j\rightarrow v_k$ and
the choices for $L$ and $R$. There are $71$ paths in $\Gamma$ of length~$3$,
so we have $|\mathcal{W}_1\cup\mathcal{W}_2\cup\mathcal{W}_3|=71\cdot 4=284$.
A matrix in $\mathcal{W}_4$ is defined by the vertex $v_k$ and the choices for~$R$,
so we have $|\mathcal{W}_4|=4$.  Similarly, a matrix in $\mathcal{W}_5$ is defined by the
vertex $v_i$ and the choices for~$L$, so $|\mathcal{W}_5|=4$.

Finally, for $\mathcal{F}_5$, there are $6$ choices for $Z_p$, $3$ choices for $Z_q$ and
$31+2$ choices for the $B$, $B^\prime$ or~$T_j$. Therefore,
$|\mathcal{F}_5|=6\cdot 3\cdot 33=594$.
In total, $\mathcal{F}$ consists of \ts
$|\cf| = 756+5208+4+292+594=6854$ \ts partial patterns
of dimensions at most $80\times 80$. The set $\mathcal{F}^\prime$ has two extra
matrices, but can be made smaller than $\cf$ since avoiding $B^\prime$ makes all
matrices $W_i \in \mathcal{F}_4$ redundant.
\end{proof}

\bigskip

\section{Proof of the Explicit Construction Lemma~\ref{l:mainL2}}\label{s:MLP}

\nin
In this section we give a proof of Lemma~\ref{l:mainL2} by reducing it to
three technical lemmas which are proved in Section~\ref{s:TL}.
Briefly, since $\cf \ssu \cf'$, we have $\cC_n(\cf') \subseteq \cC_n(\cf)$
for all $n$.  Denote $\cd_n = \cC_n(\cf') \sm \cC_n(\cf)$.
We construct an explicit involution $\phi$ on~$\cd_n$
and analyze the set of fixed points~$\cd_n'$.  We show that the set
$\cd_n'$ has a very rigid structure emulating the working
of a given two-stack automaton~$\Gamma$.

\subsection{Preliminaries} The key idea of an involution~$\phi$
defined below is a switch $B \lra B'$ between submatrices $B$ and~$B'$,
in such a way that the fixed points~$\cd_n'$ of $\phi$ avoid~$B'$.
The remaining copies of $B$ create a general diagonal structure
of the matrices in~$\cd_n'$, and enforce the location of all other
submatrices from the alphabet.  We invite the reader to consult the
example in the next section to have a visual understanding of our
approach.

We also need a convenient notion of a \emph{marked submatrix}.
Such marked submatrix will always be a~$B$, and is located at a
specific position in forbidden matrices~$W_i$.  This is best illustrated
in the matrix formulas below, where marked submatrix~$B$ is boxed.

\smallskip

Let \ts $\mathcal{F}_4^\prime =
\mathcal{W}_1^\prime\cup\mathcal{W}_2^\prime\cup
\mathcal{W}_3^\prime\cup\mathcal{W}_4^\prime\cup
\mathcal{W}_5^\prime$, where $\cw_i' \ssv \cw_i$ are defined
to have no submatrices~$B'$ (so $L=R=B$ in the notation above),
and where the elements of $\cw_4'$ have a unique marked
submatrix~$B$.  Precisely, let
$\mathcal{W}^\prime_1$, $\mathcal{W}^\prime_2$
and $\mathcal{W}^\prime_3$ denote the set of matrices
of the form $W_1^\prime$, $W_2^\prime$ and $W_3^\prime$,
respectively:
{\small
$$
{ W^\prime_1=\left(\arraycolsep=1.7pt\def\arraystretch{.8}\begin{array}{*{8}c}
\circ&\circ&T_i&\circ&\circ&\circ&\circ&\circ\\
\circ&\circ&\circ&\circ&\circ&T_j&\circ&\circ\\
B&\circ&\circ&\circ&\circ&\circ&\circ&\circ\\
\circ&\circ&\circ&\circ&\circ&\circ&\circ&Z_p\\
\circ&\circ&\circ&\circ&\circ&\circ&T_k&\circ\\
\circ&\bbs &\circ&\circ&\circ&\circ&\circ&\circ\\
\circ&\circ&\circ&\circ&B&\circ&\circ&\circ\\
\circ&\circ&\circ&Z_p&\circ&\circ&\circ&\circ
\end{array}\right)\mts\hskip-.05cm,\hskip.05cm W^\prime_2=\left(
\arraycolsep=1.7pt\def\arraystretch{.8}\begin{array}{*{8}c}
\circ&\circ&\circ&\circ&Z_p&\circ&\circ&\circ\\
\circ&\circ&\circ&T_i&\circ&\circ&\circ&\circ\\
\circ&\circ&\circ&\circ&\circ&\circ&T_j&\circ\\
\circ&B&\circ&\circ&\circ&\circ&\circ&\circ\\
Z_p&\circ&\circ&\circ&\circ&\circ&\circ&\circ\\
\circ&\circ&\circ&\circ&\circ&\circ&\circ&T_k\\
\circ&\circ&\bbs &\circ&\circ&\circ&\circ&\circ\\
\circ&\circ&\circ&\circ&\circ&B&\circ&\circ
\end{array}\right)\mts\hskip-.05cm,\hskip.05cm  W^\prime_3=\left(
\arraycolsep=1.7pt\def\arraystretch{.8}\begin{array}{*{8}c}
\circ&\circ&T_i&\circ&\circ&\circ&\circ\\
\circ&\circ&\circ&\circ&\circ&T_j&\circ\\
B&\circ&\circ&\circ&\circ&\circ&\circ\\
\circ&\circ&\circ&E&\circ&\circ&\circ\\
\circ&\circ&\circ&\circ&\circ&\circ&T_k\\
\circ&\bbs &\circ&\circ&\circ&\circ&\circ\\
\circ&\circ&\circ&\circ&B&\circ&\circ
\end{array}\right)}.
$$
}
Similarly, let $\mathcal{W}_4^\prime$, and $\mathcal{W}_5^\prime$ denote
the set of matrices of the form $W_4^\prime$ and $W_5^\prime$, respectively:
{\small
$$
{ W^\prime_4=\left(
\arraycolsep=2pt\def\arraystretch{.8}\begin{array}{*{6}c}
\circ&\circ&\circ&\circ&T_1&\circ\\
\circ&P&\circ&\circ&\circ&\circ\\
\circ&\circ&E&\circ&\circ&\circ\\
\circ&\circ&\circ&\circ&\circ&T_k\\
\bbs &\circ&\circ&\circ&\circ&\circ\\
\circ&\circ&\circ&B&\circ&\circ
\end{array}\right)\hskip-.05cm,\hskip.2cm W^\prime_5=\left(
\arraycolsep=2pt\def\arraystretch{.8}\begin{array}{*{6}c}
\circ&\circ&T_i&\circ&\circ&\circ\\
\circ&\circ&\circ&\circ&\circ&T_2\\
B&\circ&\circ&\circ&\circ&\circ\\
\circ&\circ&\circ&E&\circ&\circ\\
\circ&\circ&\circ&\circ&Q&\circ\\
\circ&\bbs &\circ&\circ&\circ&\circ
\end{array}\right)}.
$$
}
Of course, all these $W_i'$ satisfy the same conditions
as $W_i$ in the previous section.

\subsection{Construction of the involution~$\phi$} \ts
From this point on, let $\kK=(3n+2)g$. Given a $\kK\times \kK$
permutation matrix~$M$, let $M^{i,j}$ refer to the $g\times g$~submatrix in
rows $g(i-1)+1$ through~$g\ts i$, and columns $g(j-1)+1$ through~$g\ts j$.

First, observe that
$\cd_n:= \cC_{\kK}(\mathcal{F})\sm \cC_{\kK}(\mathcal{F}^{\prime})$
is the set of all~$\kK\times \kK$ matrices avoiding~$\mathcal{F}$
with at least one submatrix~$B$ or~$B^\prime$.
Since~$\cd_n$ avoids~$\mathcal{F}_1$, every submatrix $B$ or~$B^\prime$
in a matrix in~$\cd_n$ must be a consecutive $g\times g$ block.
A submatrix $B$ or~$B^\prime$ in a matrix~$M$ of~$\cd_n$ is
called {\em blocked} if replacing it with $B^\prime$ or~$B$, respectively,
would result in a matrix not in~$\cd_n$.

\begin{lem}\label{TL1}
Consider the map~$\phi$ on~$\cd_n$ which takes the leftmost unblocked
submatrix~$B$ or~$B^\prime$ and replaces it with~$B^\prime$ or~$B$ respectively.
The map~$\phi$ is an involution on~$\cd_n$.  Furthermore, the fixed points
of~$\phi$ are the~$\kK\times \kK$ matrices~$M$ such that:
\begin{enumerate}
\item matrix~$M$ avoids~$\mathcal{F}$,
\item matrix~$M$ avoids~$B^\prime$,
\item matrix~$B$ is a submatrix of~$M$, and
\item every submatrix~$B$ inside~$M$ is a marked submatrix
of a matrix in~$\mathcal{F}_4^\prime$.
\end{enumerate}
\end{lem}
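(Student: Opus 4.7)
The plan is to verify three things: well-definedness of $\phi$, the involution property, and the fixed-point characterization. Well-definedness is immediate from the definition of ``unblocked'': either $M$ has no unblocked $B/B^\prime$ submatrix (in which case we set $\phi(M)=M\in\cd_n$, producing a fixed point), or the leftmost such submatrix is swapped, yielding a matrix in $\cC_\kK(\cf)$ that still contains a $B$ or $B^\prime$ block and hence remains in $\cd_n$.

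For the involution, let $(i,j)$ be the leftmost unblocked $B/B^\prime$ submatrix of $M$; WLOG it contains a $B$. Then $(i,j)$ is still unblocked in $\phi(M)$, since undoing the swap returns $M\in\cd_n$. The key claim I will establish is that the blocked/unblocked status of every other position $(i^\prime,j^\prime)\ne(i,j)$ is unaffected by swapping at $(i,j)$; this immediately shows $(i,j)$ remains leftmost unblocked in $\phi(M)$ and hence $\phi^2=\id$. To prove the claim, I invoke the rigid block structure forced by $\cf_1$ avoidance (every simple sub-pattern appears as a consecutive $g\times g$ block) and case-analyze which forbidden pattern $F$ could be created by swapping at $(i^\prime,j^\prime)$. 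Patterns in $\cf_2$ accept either $B$ or $B^\prime$ at their unique $B/B^\prime$-slot, so swapping at $(i,j)$ is irrelevant and their $T_j$-slot cannot coincide with the $B/B^\prime$ block $(i,j)$. Patterns in $\cf_5$ come in matched $B$- and $B^\prime$-variants, so any mismatch between $M$ and $\phi(M)$ would force a contradiction with $M\in\cC_\kK(\cf)$. For $\cf_4$, the only rigid $B/B^\prime$-slot is the central (or bottom-left) $B^\prime$ of $W_1,\ldots,W_5$, while $L,R$ are flexible; running through the possible roles of $(i,j)$ in a pattern detected at $(i^\prime,j^\prime)$, any case in which the swap would alter pattern detection forces $M$ to contain a forbidden pattern already, contradiction.

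For the fixed point characterization, (1) and (3) follow at once from $M\in\cd_n$. For (2), the same case analysis shows that swapping $B^\prime\to B$ at any position never creates a new forbidden pattern, since any such pattern (the only sensitive case is the $B$-variant of $\cf_5$) would already be present in $M$ via its $B^\prime$-variant; hence every $B^\prime$ in a fixed point is unblocked, forcing $M$ to be $B^\prime$-free. For (4), each $B$ in a fixed point is blocked; ruling out $\cf_1,\cf_2,\cf_3,\cf_5$ as before, the created forbidden pattern must lie in $\cf_4$. Combined with (2), the $L,R$ slots of this pattern are necessarily $B$, which is exactly the $W_k^\prime$ configuration with $(i,j)$ as the marked $B$. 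Conversely, if $M$ satisfies (1)--(4), condition (2) eliminates $B^\prime$ submatrices, and condition (4) means the $B\to B^\prime$ swap at each $B$ transforms the $W_k^\prime$ structure around it into $W_k\in\cf_4\subset\cf$; so every $B$ is blocked and $M$ is fixed.

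The main obstacle will be the blocked-status invariance in the involution step, particularly the $\cf_4$ subcase, since $B$ and $B^\prime$ play asymmetric roles there; matching each rigid central or bottom-left $B^\prime$ slot and the flexible $L,R$ slots against all possible roles of $(i,j)$ and $(i^\prime,j^\prime)$ is mechanical but lengthy. Fortunately the very same case analysis drives both the (2) and (4) arguments of the fixed-point characterization, so the technical work is amortized across the proof.
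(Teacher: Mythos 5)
Your proposal is correct and takes essentially the same approach as the paper. Both arguments hinge on the same two observations: (i) a $B\leftrightarrow B^\prime$ swap at an unblocked position cannot create patterns in $\cf_1$, $\cf_2$, $\cf_3$, or $\cf_5$ (using the simplicity/consecutive-block rigidity), so a block can only be blocked via $\cf_4$; and (ii) $\cf_4$ is closed under replacing $B\leftrightarrow B^\prime$ at the flexible $L,R$ slots while the marked slot must specifically be $B^\prime$, which both drives the fixed-point characterization and shows blocked status is unaffected by the swap at the leftmost unblocked position, giving $\phi^2=\id$. (Your stated invariance claim is slightly stronger than the paper's, which only needs that the leftmost unblocked position is preserved, but both follow from the same $\cf_4$-closure argument; you also gloss over the $\cf_1$/$\cf_3$ cases, but the argument there is the one the paper spells out and it goes through.)
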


\noindent
Let~$\cd_n^\prime={\rm Fix}(\phi)$ denote the set of all fixed points of~$\phi$.
Since~$\phi$ is an involution, we conclude that
${|\cd_n|=|\cd_n^\prime| \hskip.15cm{\rm mod}\hskip.15cm 2}$,
so it suffices to show that
$|\cd_n^\prime|=G(\Gamma,n) \hskip.15cm{\rm mod}\hskip.15cm 2$.

\subsection{The structure of $\cd_n^\prime$}
Let~$\gamma$ be a path from~$v_1$ to~$v_2$ which is not necessarily balanced,
and let $\pi\in S_n$. Denote by $M:=M(\gamma,\pi)$ the $\kK\times \kK$
permutation matrix given by:
\begin{enumerate}
\item~$M^{2,2}=P$,
\item~$M^{3n+1,3n+1}=Q$,
\item~$M^{3i+2,3i-2}=B$ for all~$i$,
\item~$M^{3i-2,3i+2}=T_j$ for all~$i$, where~$\gamma_i=v_j$,
\item~$M^{3i,3j}=E$ whenever~$\ella(\gamma_i)=\varepsilon$ and~$\pi(i)=j$,
\item~$M^{3i,3j}=Z_p$ whenever~$\ella(\gamma_i)=s(Z_p)$ and~$\pi(i)=j$,
\item~$M^{3i,3j}=Z_p$ whenever~$\ella(\gamma_i)=(s(Z_p))^{-1}$ and~$\pi(i)=j$,
\item~$M^{i,j}=0$ is a zero matrix otherwise.
\end{enumerate}

\begin{lem}\label{TL2}
Every matrix in~$\cd_n^\prime$ is of the form $M(\gamma,\pi)$, where $\gamma$
is a path of length $n$ in $\Gamma$, and $\pi\in S_n$.
\end{lem}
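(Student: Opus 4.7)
The plan is to leverage the four characterizing conditions of fixed points from Lemma~\ref{TL1} to pin down the block structure of any $M\in \cd_n^\prime$. Organize $M$ into its $(3n+2)\times (3n+2)$ grid of $g\times g$ blocks $M^{i,j}$. Condition~(2) eliminates $B^\prime$ entirely, while conditions~(3)--(4) force every occurrence of $B$ to sit in the marked position of some $W_i^\prime\in \mathcal{F}_4^\prime$; reading off the rest of that $W_i^\prime$ then supplies additional alphabet blocks at prescribed relative offsets, and these will chain together to produce the entire $M(\gamma,\pi)$ template.

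The first step is block-alignment: since $\mathcal{F}_1$ forbids inserting a zero row or column into any element of $\mathcal{A}_g$, every appearance of an alphabet matrix inside~$M$ must occupy a consecutive $g\times g$ window, hence must coincide with one of the blocks $M^{i,j}$. Next I would walk the chain of $B$-blocks. The leftmost $B$ cannot be the marked entry of any of $W_1^\prime,W_2^\prime,W_3^\prime,W_5^\prime$, each of which demands another $B$ strictly to its upper-left; so it must come from a copy of $W_4^\prime$, pinning $M^{2,2}=P$, the leftmost $B$ at $M^{5,1}$, a $T_1$ at $M^{1,5}$, and an $E$ or $Z_p$ at $M^{3,3}$. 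Each subsequent $B$ is marked in some $W_1^\prime$, $W_2^\prime$, or $W_3^\prime$, which supplies the next $T_j$-block on the antidiagonal, the next label-block ($E$ or $Z_p$), and yet another $B$ further to the right. The chain eventually terminates inside a $W_5^\prime$, which places $M^{3n+1,3n+1}=Q$ and $T_2$ in the upper-right. The edge relations $v_i\to v_j$ built into $W_1^\prime,\ldots,W_5^\prime$ guarantee that the sequence of $T$-blocks reads off a valid walk $\gamma$ of length~$n$ in $\Gamma$, and the companion label blocks define a map $\pi$ from row-block indices to column-block indices; because $M$ is a permutation matrix, this map is injective, so $\pi\in S_n$.

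The main obstacle is rigidity---ruling out stray nonzero entries or extra alphabet blocks beyond those mandated by the chain of $W_i^\prime$'s. The budget is tight: the forced blocks already occupy exactly $(3n+2)g$ rows and columns, so any extra nonzero entry of~$M$ would have to be a stranded~$1$ outside every alphabet block, or an alphabet block in an unexpected position. These I would rule out using $\mathcal{F}_1$ (which breaks block alignment for any shifted alphabet block), $\mathcal{F}_3$ (which prevents $P$ and $Q$ from sliding away from their designated corners), and $\mathcal{F}_2$ (which forbids any $B$ or $B^\prime$ from sitting in the bottom-left of a window whose top-right contains a~$T_j$ with too much slack between them). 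Carefully verifying that these patterns really do close off all alternative configurations---in particular that no $B$-chain can branch or self-intersect, and that no label block can float into a position not of the form $M^{3i,3j}$---is where the careful choice of sizes in $\mathcal{F}_1,\mathcal{F}_2,\mathcal{F}_3$ and in the $W_i^\prime$ earns its keep, and I expect this bookkeeping to be the most delicate part of the argument.
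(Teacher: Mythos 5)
Your proposal reconstructs essentially the paper's proof: $\mathcal{F}_1$ forces consecutiveness of alphabet blocks, $\mathcal{F}_3$ pins $P$ and $Q$ to their corners, the marked-$B$ condition from Lemma~\ref{TL1} seeds and chains the $B$-diagonal through $\mathcal{F}_4^\prime$ (with the start of the chain necessarily marked in $W_4^\prime$ and the end in $W_5^\prime$), and the path $\gamma$ and permutation $\pi$ are read off from the $T$- and $E/Z_p$-blocks, with an entry count ruling out stray nonzero blocks. The one subtlety to keep straight in a full write-up is that $\mathcal{F}_1$ alone only forces alphabet submatrices to occupy \emph{consecutive} $g\times g$ windows, not yet aligned with the fixed grid of blocks $M^{i,j}$; that alignment must be propagated outward from the $\mathcal{F}_3$-pinned $P$ at $M^{2,2}$ through the $B$-chain, rather than being a direct consequence of $\mathcal{F}_1$ as your opening sentence suggests.
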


Note, however, not every matrix~$M(\gamma,\pi)$ is in $\cd_n^\prime$.
The following lemma gives a complete characterization.
Recall that given a balanced path $\gamma$, there is a
unique permutation~$\pi_\gamma$ associated with $\gamma$ given in
Proposition~\ref{prop}.

\begin{lem}\label{TL3}
Let~$\gamma$ be a path in~$\Gamma$ of length $n$, and let~$\pi\in S_n$.
Then,~$M(\gamma,\pi)\in \cd_n^\prime$ if and only if~$\gamma$ is balanced
and~$\pi=\pi_\gamma$.
\end{lem}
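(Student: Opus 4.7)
The plan is to prove Lemma~\ref{TL3} by establishing both directions through the rigid block structure of $M(\gamma,\pi)$ encoded in Lemmas~\ref{TL1} and~\ref{TL2}. The forward direction uses membership in $\cd_n^\prime$ to force the three balance conditions of Proposition~\ref{prop}, while the reverse direction is a careful but routine verification of conditions~(1)--(4) of Lemma~\ref{TL1} for the candidate matrix $M(\gamma,\pi_\gamma)$.

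For the forward direction, assume $M := M(\gamma,\pi)\in\cd_n^\prime$ and apply Lemma~\ref{TL1}(4): every $B$-block $M^{3i+2,\,3i-2}$ for $i=1,\ldots,n$ must embed as the marked submatrix of some pattern in $\cf_4^\prime$. For the boundary case $i=1$, the block $M^{5,1}$ has no $T$- or $B$-block above-left of it, so only a $W_4^\prime$ pattern can host it; this pattern contains the unique $P$-block at $M^{2,2}$ and a central $E$-block at $M^{3,3}$, forcing $\ella(\gamma_1)=\varepsilon$ and $\pi(1)=1$. The case $i=n$ is symmetric via $W_5^\prime$ and the unique $Q$-block, giving $\pi(n)=n$. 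For interior $i$, the choice among $W_1^\prime, W_2^\prime, W_3^\prime$ is forced by $\ella(\gamma_i)$: a label $\varepsilon$ admits only $W_3^\prime$ and hence $\pi(i)=i$; a label $s(Z_p)$ admits only $W_1^\prime$, producing a matched $Z_p$-pair at $M^{3i,3\pi(i)}$ and $M^{3\pi(i),3i}$ with $\pi(i)>i$ and $\ella(\gamma_{\pi(i)})=s(Z_p)^{-1}$; the case $s(Z_p)^{-1}$ is symmetric via $W_2^\prime$. These matches give conditions~(1) and~(2) of Proposition~\ref{prop} and show that $\pi$ is the involution pairing each push with its matching pop along~$\gamma$. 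For condition~(3), any same-stack crossing $i<j<\pi(i)<\pi(j)$ would yield a $\cf_5$ pattern: the $Z_p, Z_q$-blocks at $M^{3i,3\pi(i)}, M^{3j,3\pi(j)}$ together with any $B$-block $M^{3k+2,3k-2}$ satisfying $j\le k\le \pi(i)$ realize the first $\cf_5$ configuration, and the remaining crossing shapes involving $s(Z_p)^{-1}$-labels or $T$-corners produce the other two $\cf_5$ patterns. Uniqueness of $\pi_\gamma$ in Proposition~\ref{prop} then gives $\pi=\pi_\gamma$.

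For the reverse direction, take $\gamma$ balanced, set $\pi=\pi_\gamma$, and verify Lemma~\ref{TL1}(1)--(4) for $M(\gamma,\pi_\gamma)$. Conditions~(2) and~(3) are immediate from the construction. For~(4), the plan is to explicitly exhibit each $B$-block as the marked submatrix of an appropriate $W_j^\prime$, choosing $W_3^\prime, W_1^\prime, W_2^\prime$ according to $\ella(\gamma_i)$ for interior~$i$ and $W_4^\prime, W_5^\prime$ at the endpoints; the two-stack automaton axiom $\ella(v_i)\not\sim\ella(v_j)$ along edges guarantees $\pi_\gamma(i)\ge i+2$ whenever $\pi_\gamma(i)>i$, providing the vertical and horizontal separation needed to fit the two $Z_p$-blocks inside $W_1^\prime$ or $W_2^\prime$. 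For condition~(1): $\cf_1$-avoidance holds because each alphabet block in $M$ occupies $g$ consecutive rows and columns; $\cf_2,\cf_3,\cf_4$ are ruled out by the global layout (the $B$-blocks lie strictly below-left of every $T$-block and are well-separated from $P, Q$); and $\cf_5$-avoidance is exactly Proposition~\ref{prop}(3) for~$\pi_\gamma$.

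The main obstacle is the combinatorial bookkeeping in the forward direction: one must prove that each $B$-block admits essentially a unique embedding as a marked submatrix, with all rows and columns forced to specific positions in $M$. This will rely on the simplicity of alphabet elements (ruling out non-canonical row/column choices via $\cf_1$), the uniqueness of the $P$- and $Q$-blocks (anchoring $W_4^\prime$ and $W_5^\prime$ and excluding them from interior $B$-blocks), the mutually exclusive label conditions on $W_1^\prime, W_2^\prime, W_3^\prime$ (tying the choice of $W_j^\prime$ to $\ella(\gamma_i)$), and a delicate case analysis of $\cf_5$ that accounts for the pair of $Z_p$-blocks associated to each matched write/read.
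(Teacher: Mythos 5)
Your overall plan tracks the paper's proof closely: both directions hinge on Lemma~\ref{TL1}(4) forcing each block $M^{3i+2,3i-2}=B$ to be the marked submatrix of some $W_j^\prime$, with a case analysis by pattern type, $\mathcal{F}_5$ used to rule out same-stack crossings, and the converse a direct verification of conditions (1)--(4). However, two nontrivial steps in the paper's argument are elided or misstated in your sketch, and both are genuine gaps rather than routine details.

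First, in the reverse direction your justification of $\mathcal{F}_1$-avoidance is incorrect. You write that it ``holds because each alphabet block in $M$ occupies $g$ consecutive rows and columns,'' but this is exactly the wrong direction of concern: a pattern in $\mathcal{F}_1$ is a copy of some $A\in\mathcal{A}_g$ that is \emph{not} a consecutive $g\times g$ block, and such a copy could a~priori be assembled by choosing one entry of $1$ from each of $g$ different blocks $M^{a,b}$ with interleaving rows and columns. The paper rules this out by a genuinely delicate argument: since $A$ is simple, every $1$ of $A$ would lie in a distinct block; since $A\in\mathcal{A}_g$, it contains the specific submatrix $L$; then by inspecting where the center $1$ of $L$ falls relative to the main diagonal, the top three $1$'s of $L$ are forced into $Z$-blocks, two of which are $\sim$-related, producing a forbidden $\mathcal{F}_5$ pattern. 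Your sketch does not contain this argument and as written the reverse direction is unsupported.

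Second, in the forward direction you assert that the local matching ``shows that $\pi$ is the involution pairing each push with its matching pop,'' but the preceding analysis only pins down $\pi(i)$ at indices $i$ with $\pi(i)\ge i$: nothing you establish directly prevents a non-involutive $\pi$ from satisfying properties (1)--(3) of Proposition~\ref{prop}. The paper closes this gap by observing that the same analysis applies verbatim to $\pi^{-1}$ (reading the blocks $M^{3i,3\pi(i)}$ by columns instead of rows), so $\pi^{-1}$ also agrees with $\pi_\gamma$ on the set where $\pi^{-1}(i)\ge i$, and combining the two gives $\pi=\pi^{-1}=\pi_\gamma$. Your sketch should state this separate $\pi^{-1}$ argument rather than treating the involution property as a consequence of the per-index matching.

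One smaller remark: organizing the forward direction by the label $\ella(\gamma_i)$ (and peeling off $i=1,n$ as boundary cases via $P$, $Q$) is a fine variant of the paper's organization by the sign of $\pi(i)-i$, and for the label case $\ella(\gamma_i)=s(Z_p)$ you do correctly use the $W_1^\prime$ versus $W_2^\prime$ label conditions to force $\pi(i)>i$ versus $\pi(i)<i$. That part of your sketch is sound.
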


Lemmas~\ref{TL1},~\ref{TL2} and~\ref{TL3} easily imply the Main Lemma.

\subsection{Proof of Lemma~\ref{l:mainL2}}
We have $C_{\kK}(\mathcal{F})-C_{\kK}(\mathcal{F}^{\prime})=|\cd_n|$ by definition.
Lemma~$\ref{TL1}$ shows that $\phi$ is an involution,
so $|\cd_n|=|{\rm Fix} (\phi)|=|\cd_n^\prime| \hskip.15cm{\rm mod}\hskip.15cm 2.$
Combining lemmas~\ref{TL2} and~\ref{TL3}, we get $|\cd_n^\prime|=G(\Gamma,n)$.
Thus, $C_{cn+d}(\mathcal{F})-C_{cn+d}(\mathcal{F}^{\prime})=G(\Gamma,n)
\hskip.15cm{\rm mod}\hskip.15cm 2$, as desired.\qed

\bigskip

\section{Example}\label{s:example}

\nin
Let us illustrate the construction in a simple case.
Consider a two-stack automaton~$\Gamma_3$ given in Figure~2.
Note that~$\Gamma_3$ has a unique balanced path
$\gamma=v_1v_3v_5v_3v_6v_4v_2v_4v_2$.

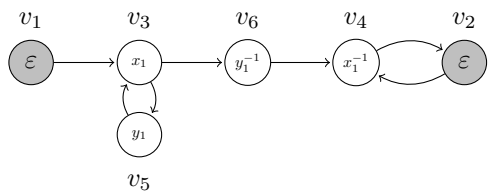
\begin{figure}[hbt]
\tikzset{every state/.style={minimum size=28pt}}
\begin{tikzpicture}[shorten >=1pt,node distance=2cm,on grid,auto,scale=.6,every state/.append style={transform shape}]
       \node[draw=white,xshift=0\ml,yshift=.7\ml] {$v_1$};
       \node[draw=white,xshift=1.8\ml,yshift=.7\ml] {$v_3$};
       \node[draw=white,xshift=3.6\ml,yshift=.7\ml] {$v_6$};
       \node[draw=white,xshift=5.4\ml,yshift=.7\ml] {$v_4$};
       \node[draw=white,xshift=7.2\ml,yshift=.7\ml] {$v_2$};
       \node[draw=white,xshift=1.8\ml,yshift=-2\ml] {$v_5$};
   	\node[state,fill=lightgray,xshift=0\ml,yshift=0\ml] (s1)   {\huge$\varepsilon$};
	\node[state,xshift=3\ml,yshift=0\ml] (s2)   {$x_1$};
	\node[state,xshift=6\ml,yshift=0\ml] (s3)   {$y_1^{-1}$};
	\node[state,xshift=9\ml,yshift=0\ml] (s4)   {$x_1^{-1}$};
	\node[state,fill=lightgray,xshift=12\ml,yshift=0\ml] (s5)   {\huge$\varepsilon$};
	\node[state,xshift=3\ml,yshift=-2\ml] (s6)   {$y_1$};
	\path[->]
    (s1) edge  node {} (s2)
    (s2) edge  node {} (s3)
    (s3) edge  node {} (s4)
    (s4) edge  [bend left=30]  node {} (s5)
    (s5) edge  [bend left=30]  node {} (s4)
    (s6) edge  [bend left=30]  node {} (s2)
    (s2) edge  [bend left=30]  node {} (s6)
       ;
\end{tikzpicture}
\caption{The two-stack automaton~$\Gamma_3$.}
\end{figure}

Let us show that the following matrix $M=M(\gamma,\pi_\gamma)$
is unique in the set of fixed points~$\cd_9^\prime$.
Here in the matrix, we have~$s(Z_1)=x_1$ and~$s(Z_2)=y_1$ (see below).


As in the definition of $M(\gamma,\pi_\gamma)$, observe that $M$ has
a diagonal of $B$ entries below the main diagonal, and a diagonal of
$T_i$ entries above the main diagonal. The $T_i$ entries give
the vertices of the path $\gamma$, in order. Observe that $M$
also has a~$P$ in the top left and $Q$ in the bottom right,
as in the definition of $M(\gamma,\pi_\gamma)$.

We have here the involution $\pi_\gamma=(2\ 8)(3\ 5)(4\ 6)$, and the
locations of the $E$, $Z_1$ and $Z_2$ matrices form the permutation matrix
for $\pi_\gamma$. Each matrix $E$ corresponds to a time when the
path $\gamma$ visited a vertex labelled $\varepsilon$.
Similarly, each $Z_p$ above the diagonal corresponds
to a pair of times when a given instance of a label
was written and removed from one of the stacks.

{\small
$$M \, = \,
{\left(
\arraycolsep=1pt\def\arraystretch{.8}\begin{array}{*{29}c}
\circ & \circ & \circ & \circ & T_1 & \circ & \circ & \circ & \circ & \circ & \circ & \circ & \circ  & \circ & \circ & \circ & \circ & \circ & \circ & \circ & \circ & \circ & \circ & \circ & \circ & \circ & \circ & \circ & \circ\\
\circ & P & \circ & \circ & \circ & \circ & \circ & \circ & \circ & \circ & \circ & \circ & \circ  & \circ & \circ & \circ & \circ & \circ & \circ & \circ & \circ & \circ & \circ & \circ & \circ & \circ & \circ & \circ & \circ\\
\circ & \circ & E & \circ & \circ & \circ & \circ & \circ & \circ & \circ & \circ & \circ & \circ  & \circ & \circ & \circ & \circ& \circ & \circ & \circ & \circ & \circ & \circ & \circ & \circ & \circ & \circ & \circ & \circ\\
\circ & \circ & \circ & \circ & \circ & \circ & \circ & T_3& \circ & \circ & \circ & \circ & \circ  & \circ & \circ & \circ & \circ& \circ & \circ & \circ & \circ & \circ & \circ & \circ & \circ & \circ & \circ & \circ & \circ\\
B & \circ & \circ & \circ & \circ & \circ & \circ & \circ & \circ & \circ & \circ & \circ & \circ & \circ & \circ & \circ & \circ& \circ & \circ & \circ & \circ & \circ & \circ & \circ & \circ & \circ & \circ & \circ & \circ\\
\circ & \circ & \circ & \circ & \circ & \tikz\draw[red,fill=red] (0,0) circle (.5ex); & \tikz\draw[red,fill=red] (0,0) circle (.5ex); & \tikz\draw[red,fill=red] (0,0) circle (.5ex); & \tikz\draw[red,fill=red] (0,0) circle (.5ex); & \tikz\draw[red,fill=red] (0,0) circle (.5ex); & \tikz\draw[red,fill=red] (0,0) circle (.5ex); & \tikz\draw[red,fill=red] (0,0) circle (.5ex); & \tikz\draw[red,fill=red] (0,0) circle (.5ex); & \tikz\draw[red,fill=red] (0,0) circle (.5ex); & \tikz\draw[red,fill=red] (0,0) circle (.5ex); & \tikz\draw[red,fill=red] (0,0) circle (.5ex); & \tikz\draw[red,fill=red] (0,0) circle (.5ex);& \tikz\draw[red,fill=red] (0,0) circle (.5ex); & \tikz\draw[red,fill=red] (0,0) circle (.5ex); & \tikz\draw[red,fill=red] (0,0) circle (.5ex); & \tikz\draw[red,fill=red] (0,0) circle (.5ex); & \tikz\draw[red,fill=red] (0,0) circle (.5ex); & \tikz\draw[red,fill=red] (0,0) circle (.5ex); & {\color{red}Z_1}  & \circ & \circ & \circ & \circ & \circ\\
\circ & \circ & \circ & \circ & \circ & \tikz\draw[red,fill=red] (0,0) circle (.5ex); & \circ & \circ & \circ & \circ & T_5 & \circ & \circ & \circ & \circ & \circ & \circ & \circ & \circ & \circ & \circ & \circ & \circ & \tikz\draw[red,fill=red] (0,0) circle (.5ex); & \circ & \circ & \circ & \circ & \circ\\
\circ & \circ & \circ & B & \circ & \tikz\draw[red,fill=red] (0,0) circle (.5ex); & \circ & \circ & \circ & \circ & \circ & \circ & \circ & \circ & \circ & \circ & \circ& \circ & \circ & \circ & \circ & \circ & \circ & \tikz\draw[red,fill=red] (0,0) circle (.5ex); & \circ & \circ & \circ & \circ & \circ\\
\circ & \circ & \circ & \circ & \circ & \tikz\draw[red,fill=red] (0,0) circle (.5ex); & \circ & \circ & \tikz\draw[cyan,fill=cyan] (0,0) circle (.5ex); & \tikz\draw[cyan,fill=cyan] (0,0) circle (.5ex); & \tikz\draw[cyan,fill=cyan] (0,0) circle (.5ex); & \tikz\draw[cyan,fill=cyan] (0,0) circle (.5ex); & \tikz\draw[cyan,fill=cyan] (0,0) circle (.5ex); & \tikz\draw[cyan,fill=cyan] (0,0) circle (.5ex); & {\color{cyan}Z_2} & \circ & \circ & \circ & \circ & \circ & \circ & \circ & \circ & \tikz\draw[red,fill=red] (0,0) circle (.5ex); & \circ & \circ & \circ & \circ & \circ\\
\circ & \circ & \circ & \circ & \circ & \tikz\draw[red,fill=red] (0,0) circle (.5ex); & \circ & \circ & \tikz\draw[cyan,fill=cyan] (0,0) circle (.5ex); & \circ & \circ & \circ & \circ & T_3 & \tikz\draw[cyan,fill=cyan] (0,0) circle (.5ex); & \circ & \circ & \circ  & \circ & \circ & \circ & \circ & \circ & \tikz\draw[red,fill=red] (0,0) circle (.5ex); & \circ & \circ & \circ & \circ & \circ\\
\circ & \circ & \circ & \circ & \circ & \tikz\draw[red,fill=red] (0,0) circle (.5ex); & B & \circ & \tikz\draw[cyan,fill=cyan] (0,0) circle (.5ex); & \circ & \circ & \circ & \circ & \circ & \tikz\draw[cyan,fill=cyan] (0,0) circle (.5ex); & \circ & \circ & \circ & \circ & \circ & \circ & \circ & \circ & \tikz\draw[red,fill=red] (0,0) circle (.5ex);& \circ & \circ & \circ & \circ & \circ\\
\circ & \circ & \circ & \circ & \circ & \tikz\draw[red,fill=red] (0,0) circle (.5ex); & \circ & \circ & \tikz\draw[cyan,fill=cyan] (0,0) circle (.5ex); & \circ & \circ & \tikz\draw[red,fill=red] (0,0) circle (.5ex); & \tikz\draw[red,fill=red] (0,0) circle (.5ex); & \tikz\draw[red,fill=red] (0,0) circle (.5ex); & \tikz\draw[black,fill=black] (0,0) circle (.5ex); & \tikz\draw[red,fill=red] (0,0) circle (.5ex); & \tikz\draw[red,fill=red] (0,0) circle (.5ex); & {\color{red}Z_1}& \circ & \circ & \circ & \circ & \circ & \tikz\draw[red,fill=red] (0,0) circle (.5ex); & \circ & \circ & \circ & \circ & \circ\\
\circ & \circ & \circ & \circ & \circ & \tikz\draw[red,fill=red] (0,0) circle (.5ex); & \circ & \circ & \tikz\draw[cyan,fill=cyan] (0,0) circle (.5ex); & \circ & \circ & \tikz\draw[red,fill=red] (0,0) circle (.5ex); & \circ & \circ & \tikz\draw[cyan,fill=cyan] (0,0) circle (.5ex); & \circ & T_6 & \tikz\draw[red,fill=red] (0,0) circle (.5ex); & \circ & \circ& \circ & \circ & \circ & \tikz\draw[red,fill=red] (0,0) circle (.5ex); & \circ & \circ & \circ & \circ & \circ \\
\circ & \circ & \circ & \circ & \circ & \tikz\draw[red,fill=red] (0,0) circle (.5ex); & \circ & \circ & \tikz\draw[cyan,fill=cyan] (0,0) circle (.5ex); & B & \circ & \tikz\draw[red,fill=red] (0,0) circle (.5ex); & \circ & \circ & \tikz\draw[cyan,fill=cyan] (0,0) circle (.5ex); & \circ & \circ & \tikz\draw[red,fill=red] (0,0) circle (.5ex); & \circ & \circ& \circ & \circ & \circ & \tikz\draw[red,fill=red] (0,0) circle (.5ex); & \circ & \circ & \circ & \circ & \circ \\
\circ & \circ & \circ & \circ & \circ & \tikz\draw[red,fill=red] (0,0) circle (.5ex); & \circ & \circ & {\color{cyan}Z_2} & \tikz\draw[cyan,fill=cyan] (0,0) circle (.5ex); & \tikz\draw[cyan,fill=cyan] (0,0) circle (.5ex); & \tikz\draw[black,fill=black] (0,0) circle (.5ex); & \tikz\draw[cyan,fill=cyan] (0,0) circle (.5ex); & \tikz\draw[cyan,fill=cyan] (0,0) circle (.5ex); & \tikz\draw[cyan,fill=cyan] (0,0) circle (.5ex); & \circ & \circ & \tikz\draw[red,fill=red] (0,0) circle (.5ex); & \circ & \circ& \circ & \circ & \circ & \tikz\draw[red,fill=red] (0,0) circle (.5ex); & \circ & \circ & \circ & \circ & \circ \\
\circ & \circ & \circ & \circ & \circ & \tikz\draw[red,fill=red] (0,0) circle (.5ex); & \circ & \circ & \circ & \circ & \circ & \tikz\draw[red,fill=red] (0,0) circle (.5ex); & \circ & \circ & \circ & \circ & \circ & \tikz\draw[red,fill=red] (0,0) circle (.5ex); & \circ & T_4& \circ & \circ & \circ & \tikz\draw[red,fill=red] (0,0) circle (.5ex); & \circ & \circ & \circ & \circ & \circ \\
\circ & \circ & \circ & \circ & \circ & \tikz\draw[red,fill=red] (0,0) circle (.5ex); & \circ & \circ & \circ & \circ & \circ & \tikz\draw[red,fill=red] (0,0) circle (.5ex);& B & \circ & \circ & \circ & \circ & \tikz\draw[red,fill=red] (0,0) circle (.5ex); & \circ & \circ& \circ & \circ & \circ & \tikz\draw[red,fill=red] (0,0) circle (.5ex); & \circ & \circ & \circ & \circ & \circ \\
\circ & \circ & \circ & \circ & \circ & \tikz\draw[red,fill=red] (0,0) circle (.5ex); & \circ & \circ & \circ & \circ & \circ & {\color{red}Z_1} & \tikz\draw[red,fill=red] (0,0) circle (.5ex); & \tikz\draw[red,fill=red] (0,0) circle (.5ex); & \tikz\draw[red,fill=red] (0,0) circle (.5ex); & \tikz\draw[red,fill=red] (0,0) circle (.5ex); & \tikz\draw[red,fill=red] (0,0) circle (.5ex); & \tikz\draw[red,fill=red] (0,0) circle (.5ex); & \circ & \circ& \circ & \circ & \circ & \tikz\draw[red,fill=red] (0,0) circle (.5ex); & \circ & \circ & \circ & \circ & \circ \\
\circ & \circ & \circ & \circ & \circ & \tikz\draw[red,fill=red] (0,0) circle (.5ex); & \circ & \circ & \circ & \circ & \circ & \circ & \circ & \circ & \circ & \circ & \circ & \circ & \circ & \circ & \circ & \circ & T_2 & \tikz\draw[red,fill=red] (0,0) circle (.5ex); & \circ & \circ & \circ & \circ & \circ\\
\circ & \circ & \circ & \circ & \circ & \tikz\draw[red,fill=red] (0,0) circle (.5ex); & \circ & \circ & \circ & \circ & \circ & \circ & \circ & \circ & \circ & B & \circ & \circ & \circ & \circ & \circ & \circ & \circ & \tikz\draw[red,fill=red] (0,0) circle (.5ex); & \circ & \circ & \circ & \circ & \circ\\
\circ & \circ & \circ & \circ & \circ & \tikz\draw[red,fill=red] (0,0) circle (.5ex); & \circ & \circ & \circ & \circ & \circ & \circ & \circ & \circ & \circ & \circ & \circ & \circ & \circ & \circ & E & \circ & \circ & \tikz\draw[red,fill=red] (0,0) circle (.5ex);& \circ & \circ & \circ & \circ & \circ\\
\circ & \circ & \circ & \circ & \circ & \tikz\draw[red,fill=red] (0,0) circle (.5ex); & \circ & \circ & \circ & \circ & \circ & \circ & \circ & \circ & \circ & \circ & \circ & \circ & \circ & \circ & \circ & \circ & \circ & \tikz\draw[red,fill=red] (0,0) circle (.5ex); & \circ & T_4 & \circ & \circ & \circ\\
\circ & \circ & \circ & \circ & \circ & \tikz\draw[red,fill=red] (0,0) circle (.5ex); & \circ & \circ & \circ & \circ & \circ & \circ & \circ & \circ & \circ & \circ & \circ & \circ & B & \circ & \circ & \circ & \circ & \tikz\draw[red,fill=red] (0,0) circle (.5ex);& \circ & \circ & \circ & \circ & \circ\\
\circ & \circ & \circ & \circ & \circ & {\color{red}Z_1} & \tikz\draw[red,fill=red] (0,0) circle (.5ex); & \tikz\draw[red,fill=red] (0,0) circle (.5ex); & \tikz\draw[red,fill=red] (0,0) circle (.5ex); & \tikz\draw[red,fill=red] (0,0) circle (.5ex); & \tikz\draw[red,fill=red] (0,0) circle (.5ex); & \tikz\draw[red,fill=red] (0,0) circle (.5ex); & \tikz\draw[red,fill=red] (0,0) circle (.5ex); & \tikz\draw[red,fill=red] (0,0) circle (.5ex); & \tikz\draw[red,fill=red] (0,0) circle (.5ex); & \tikz\draw[red,fill=red] (0,0) circle (.5ex); & \tikz\draw[red,fill=red] (0,0) circle (.5ex); & \tikz\draw[red,fill=red] (0,0) circle (.5ex); & \tikz\draw[red,fill=red] (0,0) circle (.5ex); & \tikz\draw[red,fill=red] (0,0) circle (.5ex); & \tikz\draw[red,fill=red] (0,0) circle (.5ex); & \tikz\draw[red,fill=red] (0,0) circle (.5ex); & \tikz\draw[red,fill=red] (0,0) circle (.5ex); & \tikz\draw[red,fill=red] (0,0) circle (.5ex); & \circ & \circ & \circ & \circ & \circ\\
 \circ & \circ & \circ & \circ & \circ & \circ & \circ & \circ & \circ & \circ & \circ & \circ & \circ & \circ & \circ & \circ & \circ & \circ & \circ & \circ & \circ & \circ & \circ & \circ & \circ & \circ & \circ & \circ & T_2\\
\circ & \circ & \circ & \circ & \circ & \circ & \circ & \circ & \circ & \circ & \circ & \circ & \circ & \circ & \circ & \circ & \circ & \circ & \circ & \circ & \circ & B & \circ & \circ & \circ & \circ & \circ & \circ & \circ\\
\circ & \circ & \circ & \circ & \circ & \circ & \circ & \circ & \circ & \circ & \circ & \circ & \circ & \circ & \circ & \circ & \circ & \circ & \circ & \circ & \circ & \circ & \circ & \circ & \circ & \circ & E & \circ & \circ\\
\circ & \circ & \circ & \circ & \circ & \circ & \circ & \circ & \circ & \circ & \circ & \circ & \circ & \circ & \circ & \circ & \circ & \circ & \circ & \circ & \circ & \circ & \circ & \circ & \circ & \circ & \circ & Q & \circ\\
\circ & \circ & \circ & \circ & \circ & \circ & \circ & \circ & \circ & \circ & \circ & \circ & \circ & \circ & \circ & \circ & \circ & \circ & \circ & \circ & \circ & \circ & \circ & \circ & B& \circ & \circ & \circ & \circ
\end{array}\right)}.
$$
}

\smallskip

The red and blue squares in $M$ connect each $Z_p$ with
the corresponding times along $\gamma$ that the label was
written and removed. Red represents~$X$, while blue represents~$Y$,
as defined in Section~\ref{s:2SA}. Notice that when two of these
squares cross (marked black), it means that the first label written was also
the first label removed.  This can only happen when the two labels
are written on different stacks, so squares of the same color cannot cross.

The matrix~$M$ avoids~$\mathcal{F}_2$, since the only copes of~$P$
and~$Q$ are near the top left and bottom right corner. Similarly,
matrix~$M$ avoids~$\mathcal{F}_3$, since no~$T_i$ is too far up
and to the right of any~$B$. Clearly,~$M$ avoids~$B^\prime$,
so~$M$ avoids~$\mathcal{F}_4$.

Now recall the matrix~$L$ in the
definition of the alphabet~$\ca_{g}$:
{\small
$$L \, = \,
{\left(
\arraycolsep=2pt\def\arraystretch{.8}\begin{array}{*{7}c}
\cdot&\cdot&\cdot&\cdot&1&\cdot&\cdot\\
\cdot&\cdot&\cdot&\cdot&\cdot&1&\cdot\\
\cdot&\cdot&\cdot&\cdot&\cdot&\cdot&1\\
\cdot&\cdot&\cdot&1&\cdot&\cdot&\cdot\\
1&\cdot&\cdot&\cdot&\cdot&\cdot&\cdot\\
\cdot&1&\cdot&\cdot&\cdot&\cdot&\cdot\\
\cdot&\cdot&1&\cdot&\cdot&\cdot&\cdot
\end{array}\right)}.
$$}

\noindent
Observe that~$M$ avoids~$\mathcal{F}_1$, since there is no
submatrix where each~$1$ comes from a different~$g\times g$ block.
Finally, the fact that~$M$ avoids~$\mathcal{F}_5$ corresponds
to the fact that the lines coming out of $Z_p$ and $Z_q$ never
cross when~$Z_p\sim Z_q$.

Clearly, matrix~$M$ contains~$B$ and avoids~$B^\prime$. One can verify
that every submatrix~$B$ in~$M$ is a marked submatrix in some
matrix $\mathcal{F}^\prime_4$.  Since~$M$ satisfies all of the conditions
of Lemma~\ref{TL1}, we conclude that $M\in \cd_n^\prime$.

\bigskip

\section{Proofs of technical lemmas}\label{s:TL}

\subsection{Proof of Lemma~\ref{TL1}.}
First, let us show that every matrix~$M$ which satisfies the
following properties, is a fixed point of~$\phi$~:
\begin{enumerate}
\item matrix~$M$ avoids~$\mathcal{F}$,
\item matrix~$M$ avoids~$B^\prime$,
\item matrix~$B$ is a submatrix of~$M$, and
\item every submatrix~$B$ in~$M$ is a marked submatrix
in some matrix $\mathcal{F}_4^\prime$.
\end{enumerate}

\smallskip

\noindent
Observe that (1) and (3) imply that~$M$ avoids~$\mathcal{F}$ but not
$\cf'=\mathcal{F}\cup\{B,B^\prime\}$. Therefore, we have $M \in \cd_n$.
Further, (2)~and (4) imply that~$M$ is a fixed point of~$\phi$,
since $M$ has no~$B^\prime$, and replacing any~$B$ with a~$B^\prime$
will create a matrix in~$\mathcal{F}_4$.

Next, we show that every matrix in $\cd_n$ which violates the above criteria
is fixed by $\phi^2$ but not by~$\phi$. Observe that a matrix~$M\in \cd_n$
which satisfies~(2) also satisfies (1) and~(3). Consider now a
matrix~$M\in \cd_n$ which violates either (2) or (4).
Clearly, we have~$\phi(M)\in \cd_n$. It suffices to show that~$\phi(M)\neq M$
and that~$\phi^2(M)=M$.

Let~$N$ be a matrix in~$\cd_n$. Denote by~$A$ a submatrix~$B$ or~$B^\prime$
in~$N$. Let~$N^\prime$ be the matrix formed by replacing~$A$
with~$B^\prime$ or~$B$, respectively. Similarly, let~$A^\prime$ denote this
submatrix $B$ or~$B^\prime$ in~$N^\prime$.

Assume that~$N^\prime$ did not avoid~$\mathcal{F}_1$.
Then there would exist a submatrix $S\in \mathcal{A}_g$ of~$N^\prime$
which is not a consecutive~$g\times g$ block. Note that $S$ must intersect~$A^\prime$.
Consider the submatrix $S\cap A^\prime$. This must be a permutation matrix,
since~$S$ and~$A^\prime$ are permutation matrices. Furthermore, $S\cap A^\prime$
must be a consecutive submatrix of~$S$, since~$A^\prime$ is a consecutive
submatrix of~$N^\prime$. Since $S$ is simple, we have that~$S\cap A^\prime$
is a single~1 entry. This entry might be a 0 in~$N$, but it does not matter
because we could replace it with another 1 entry from~$A$, to form a
non-consecutive submatrix~$S$ in~$N$. Thus,~$N$ does not avoid~$\mathcal{F}_1$,
contradicting the assumption that~$N\in \cd_n$.

Since~$N^\prime$ avoids~$\mathcal{F}_1$, any instance of a matrix
from~$\mathcal{F}_2,$~$\mathcal{F}_3$, or~$\mathcal{F}_5$
intersecting~$A^\prime$ in~$N^\prime$ must contain~$A^\prime$ or
intersect~$A^\prime$ in a single row or column. In either case,
replacing~$A$ with~$A^\prime$ gives another matrix in~$\mathcal{F}_2,$
$\mathcal{F}_3$, or~$\mathcal{F}_5$, contradicting the fact that~$N^\prime$
avoids $\mathcal{F}_2$,~$\mathcal{F}_3$ and~$\mathcal{F}_5$.

Therefore, if the submatrix~$A$ is blocked it must be because~$A^\prime$
is contained in a matrix in~$\mathcal{F}_4$. Since~$\mathcal{F}_4$
is closed under replacing any~$B$ with a~$B^\prime$, it must be that~$A=B$
and~$A^\prime=B^\prime$. Any matrix which violates (2) therefore contains
a instance of~$B^\prime$, which is necessarily unblocked, and so is not
fixed by~$\phi$. For matrices in~$\cd_n$ which satisfy (2), containing
an unblocked instance of~$B$ is equivalent to violating (4). Therefore,
any matrix which violates (4) is not fixed by~$\phi$, so~$\phi(M)\neq M$.

Let~$\ord(N)$ denote the leftmost unblocked instance of~$B$ or~$B^\prime$ in~$N$,
and let~$A=\ord(N)$.  Observe that~$A^\prime$ is also clearly unblocked in~$N^\prime$.
Assume that~$A^\prime\neq \ord(N^\prime)$. There must be another unblocked
instance~$S$ of~$B$ or~$B^\prime$, which is further to the left than~$A^\prime$.
However,~$S$ would also be unblocked in~$N$, contradicting the assumption
that~$A=\ord(N)$. Therefore, $A^\prime=\ord(N^\prime)$, which implies that
$\phi(N^\prime)=N$.  We conclude that
$\phi(\phi(M))=M$ for all~$M$.

In summary, every matrix~$M$ satisfying (1) through~(4) is fixed by~$\phi$,
and every matrix in~$\cd_n$ not satisfying (1)~through (4) is fixed
by~$\phi^2$ but not $\phi$.  Therefore, map~$\phi$ is an involution
whose fixed points are exactly the matrices satisfying (1)
through~(4).\qed

\bigskip

\subsection{Proof of Lemma~\ref{TL2}.}
Let~$M$ be a matrix in~$\cd_n^\prime$. Lemma~\ref{TL1} shows that
\begin{enumerate}
\item matrix~$M$ avoids~$\mathcal{F}$,
\item matrix~$M$ avoids~$B^\prime$,
\item matrix~$B$ is a submatrix of~$M$, and
\item every instance of~$B$ in~$M$ is the marked submatrix
in some matrix in~$\mathcal{F}_4^\prime$.
\end{enumerate}

\noindent
Observe that every matrix in~$\mathcal{F}_4^\prime$ has a~$B$ or a~$P$ above
the marked submatrix~$\bbs$. Therefore, for every instance of~$B$ in~$M$,
there must be another~$B$ or a~$P$ somewhere above it. Since there
is at least one~$B$ in~$M$, there must be at least one~$P$ in~$M$.
This~$P$ must have at least $g$ rows above it and $g$ columns to the left,
since it is contained in some matrix in~$\mathcal{W}^\prime_4$.
Note that matrix~$P$ cannot have any more rows above it or columns to the left,
since~$M$ avoids~$\mathcal{F}_3$. Therefore,~$M^{2,2}$ is the unique
instance of~$P$ in~$M$. Similar analysis shows that~$M^{3n+1,3n+1}$
is the unique instance of~$Q$ in~$M$.

Similarly, every matrix in~$\mathcal{F}_4^\prime$ has a~$T_j$ at least~$4g$
rows above and $4g$ columns to the right of the~$\bbs$.
Since~$M$ avoids~$\mathcal{F}_2$, there can be no more rows or columns
inserted in between this~$B$ and~$T_j$. Therefore, for every instance
of~$B$ in~$M$, there must be some~$T_j$ exactly $4g$ rows above and
$4g$ columns to the right. In particular, this means that if~$M^{i,j}=B$,
then~$M^{i-4,j+4}=T_j$ for some~$j$.

The~$T_1$ above the~$P$ must be $4g$ rows above and $4g$ columns
to the right of the~$B$ to the left of the~$P$. This is only possible
if this~$B$ is in location~$M^{5,1}$, and the~$T_1$ is in location~$M^{1,5}$.

If~$M^{i,j}=B$ and~$i\neq n$, then there must another instance of~$B$ exactly
$3g$ columns to the right, and some~$T_k$ exactly $g$ rows above,
since every matrix in~$\mathcal{W}^\prime_1$, $\mathcal{W}^\prime_2$,
$\mathcal{W}^\prime_3$ or $\mathcal{W}^\prime_4$ has this pattern.
We know that these submatrices cannot be further away because
they are squeezed between the~$B$ and the~$T_j$. The new~$T_k$ must
be $4g$ rows below and $4g$ columns to the right of the new instance
of~$B$. This can only be achieved by letting~$M^{i+3,j+3}=B$
and~$M^{i-1,j+7}=T_k$.

Thus, the instances of~$B$ in a matrix~$M$ in~$D^\prime_{n}$
are exactly the matrices~${M^{3i+2,3i-2}}$, for~$1\leq i\leq n$.
Similarly, we have $M^{3i-2,3i+2}=T_j$ for some~$j$. Let~$\gamma_i$ denote
the vertex in~$\Gamma$ corresponding to the matrix~$M^{3i-2,3i+2}$.
The restrictions in~$\mathcal{F}_4^\prime$ about adjacent~$v_i$
ensure that $\gamma=\gamma_1\ldots\gamma_n$ is a path from $v_1$ to~$v_2$.

When we specify this path~$\gamma$, we uniquely define all
of the submatrices~$M^{i,j}$ except for those of the form~$M^{3i,3j}$.
Further, the restrictions from~$\mathcal{F}_4^\prime$ tell us that
each~$M^{3i,3j}$ is either a matrix of zeros or equal to~$E$ or~$Z_p$.
If the matrix at location~$M^{3i,3j}$ is nonzero, then it is uniquely
determined by the restrictions from $\mathcal{F}_4^\prime$ on the rows
of~$M$.

Let~$\pi$ be the permutation such that~$\pi(i)$ is the unique~$j$,
such that $M^{3i,3j}$ is~$E$ or~$Z_p$. Let us prove that $M=M(\gamma,\pi)$.
We already showed that $M^{2,2}=P$, $M^{3n+1,3n+1}=Q$, and
$M^{3i+2,3i-2}=B$ for all~$i$.
We know that~$M^{3i-2,3i+2}=T_j$ where~$\gamma_i=v_j$,
by the definition of~$\gamma$.

By definition, submatrix $M^{3i,3j}$ is non-zero if and only if~$\pi(i)=j$.
The restrictions on $E$ and~$Z_p$ from $\mathcal{F}_4^\prime$ imply
that $M^{3i,3j}=E$ for ${\ella(\gamma_i)=\varepsilon}$, $M^{3i,3j}=Z_p$
for~$\ella(\gamma_i)=s(Z_p)$, and
$M^{3i,3j}=Z_p$ for ${\ella(\gamma_i)=(s(Z_p))^{-1}}$.
We have a total of~$\kK$ entries~$1's$,
so all remaining~$M^{i,j}$ are zeros matrices.
This implies that $M=M(\gamma,\pi)$.\qed

\bigskip

\subsection{Proof of the \textbf{\textit{only if}} part of Lemma~\ref{TL3}.}
First, we show that if~$M(\gamma,\pi)\in \cd_n^\prime$, then~$\gamma$ is
balanced and~${\pi=\pi_\gamma}$.
By Proposition \ref{prop}, it suffices to show that the permutation~$\pi$ satisfies
the following four properties:
\begin{enumerate}
\item~$\ella(\gamma_{i})=\varepsilon$ for all~$\pi(i)=i$ ,
\item~$\ella(\gamma_{i})\in X\cup Y$ and~$\ella(\gamma_{\pi(i)})=\ella(\gamma_{i})^{-1}$, for all~$\pi(i)>i$,
\item there are no~$i$ and~$j$ with~$\ella(\gamma_{i})\sim\ella(\gamma_j)$ such that~$i<j<\pi(i)<\pi(j)$, and
\item permutation~$\pi$ is an involution.
\end{enumerate}

\smallskip

\noindent{\it Proof of {\rm(1):}} \. Consider the case where~$\pi(i)=i$.
The submatrix~$M^{3i,3i}$ is non-zero then. In this case, the $B$ at
$M^{3i+2,3j-2}$ must be the marked submatrix~$\bbs$ in some instance
of a matrix in~$\mathcal{W}_3^\prime$, $\mathcal{W}_4^\prime$
or~$\mathcal{W}_5^\prime$. Thus, $M^{3i,3i}=E$ and~$M^{3i-2,3j-2}=T_j$,
where~$\ella(\gamma_i)=\ella(v_j)=\varepsilon$, so~$\pi$ satisfies~(1).

\smallskip

\noindent{\it Proof of {\rm(2):}} \. Consider the case where~$\pi(i)>i$,
so $M^{3i,3\pi(i)}$ is non-zero. In this case, the submatrix $B$ at
$M^{3i+2,3i-2}$ must be~$\bbs$ in some instance
of a matrix in~$\mathcal{W}_1^\prime$. Thus, $M^{3i,3\pi(i)}=Z_p$,
and $M^{3i-2,3i+2}=T_j$, where~$\ella(\gamma_i)=\ella(v_j)=s(Z_p)$.
Similarly,  the submatrix~$B$ at $M^{3\pi(i)+2,3\pi(i)-2}$ must be
a marked submatrix in some matrix in~$\mathcal{W}_2^\prime$.
Thus, $M^{3\pi(i)-2,3\pi(i)+2}=T_k$, where
$\ella(\gamma_{\pi(i)})=\ella(v_k)=(s(Z_p))^{-1}$.
We conclude that $\ella(\gamma_{i})\in X\cup Y$ and
$\ella(\gamma_{\pi(i)})=\ella(\gamma_{i})^{-1}$, so~$\pi$ satisfies~(2).
\. Similar analysis shows that if~$\pi(i)<i$, then
$\ella(\gamma_{\pi(i)})\in X\cup Y$ and
$\ella(\gamma_{i})=\ella(\gamma_{\pi(i)})^{-1}$,
so $\pi^{-1}$ satisfies~(2) as well.

\smallskip

\noindent{\it Proof of {\rm(3):}} \. Assume that there exist $i$ and~$j$
with $\ella(\gamma_{i})\sim\ella(\gamma_j)$ such that~${i<j<\pi(i)<\pi(j)}$.
Let ${s(Z_p)=\ella(\gamma_i)}$ and ${s(Z_q)=\ella(\gamma_j)}$.
We then have $M^{3i,3\pi(i)}=Z_p$ and ${M^{3j,3\pi(j)}=Z_q}$.
Observe that ${M^{3j+2,3j-2}=B}$.  Since $3i<3j<3j+2$ and
$3j-2<3\pi(i)<3\pi(j)$, these three matrices together
form a pattern which is in~$\mathcal{F}_5$, a contradiction.
This implies that~$\pi$ satisfies~(3).
\. Similar analysis shows that there are no~$i$ and~$j$
with $\ella(\gamma_{i})\sim\ella(\gamma_j)$, such that~$\pi(i)<\pi(j)<i<j$,
so~$\pi^{-1}$ satisfies~(3) as well.

\smallskip

\noindent{\it Proof of {\rm(4):}} \. Note that the conditions on~$\pi$
only mention values~$i$ for which~$\pi(i)\geq i$. Therefore, even
if~$\pi$ were not an involution, the permutation~$\pi$ would have to agree
with $\pi_\gamma$ at all~$i$ such that~$\pi(i)\geq i$, by the
uniqueness of the involution~$\pi_\gamma$. However, a similar analysis
shows that $\pi^{-1}$ must also satisfy conditions (1), (2) and~(3),
so $\pi^{-1}$ also agrees with $\pi_\gamma$ at all~$i$ such
that~$\pi^{-1}(i)\geq i$.  Combining these two observations, we
conclude that $\pi=\pi^{-1}=\pi_\gamma$.\qed

\bigskip

\subsection{Proof of the \textbf{\textit{if}} part of Lemma~\ref{TL3}.}
It remains to prove that $M(\gamma,\pi_\gamma) \in \cd_n^\prime$.
Clearly, matrix~$B$ is a submatrix of $M(\gamma,\xi\pi_\gamma)$.
In addition, matrix $M(\gamma,\pi_\gamma)$ has exactly~$n$
submatrices~$B$, and each of them is the marked submatrix~$\bbs$
in some matrix in~$\mathcal{F}_4^\prime$.

Recall that there is a unique $P$ and $Q$ in~$M(\gamma,\pi_\gamma)$, and they are
sufficiently close to the edge to ensure that $M(\gamma,\pi_\gamma)$
avoids~$\mathcal{F}_2$. Similarly, we know the locations of all submatrices
$B$ and~$T_i$, and they ensure that $M(\gamma,\pi_\gamma)$
avoids~$\mathcal{F}_3$. Clearly, matrix $M(\gamma,\pi_\gamma)$ avoids~$B^\prime$,
and therefore also avoids~$\mathcal{F}_4$.
Also, if $M(\gamma,\pi_\gamma)$ did not avoid~$\mathcal{F}_5$, this would
contradict the fact that there are no~$i$ and~$j$ with
$\ella(\gamma_{i})\sim\ella(\gamma_j)$ such that
$i<j<\pi_\gamma(i)<\pi_\gamma(j)$.

Finally, we need to show that $M(\gamma,\pi_\gamma)$ avoids~$\mathcal{F}_1$.
Assume that $M(\gamma,\pi_\gamma)$ does not avoid $\mathcal{F}_1$,
and that $A$ is a $g\times g$ submatrix of~$M(\gamma,\pi_\gamma)$,
such that $A\in \mathcal{A}_g$ and $A$ is not a consecutive $g\times g$
block in~$M(\gamma,\pi_\gamma)$.
Since~$A$ is simple, every~$1$ in~$A$ comes from a different
block~$M^{i,j}$.  By definition of~$\ca_g$, matrix~$L$
is a submatrix of~$A$ (see $\S$\ref{ss:construction-notation}).
Assume the~$1$ in the center of~$L$ is above or on the main diagonal of $M$. It must therefore be in an $E$, $T_j$, or $Z_p$. The top three $1$'s
in $L$ must be in matrices of the form $Z_q$, since there is
not enough room for any of them to be in a $T_k$.

Since the relation~``$\sim$'' partitions the set of $Z_p$'s
into two equivalence classes, two of these $1$'s must be
in some~$Z_p$ and~$Z_q$ with~$Z_p\sim Z_q$.
However, there is a submatrix~$B$ that is below and to the left
of both~$Z_p$ and~$Z_q$. Together they form a matrix in~$\mathcal{F}_5$,
contradicting the fact that $M(\gamma,\pi_\gamma)$ was shown above to avoid~$\mathcal{F}_5$.
Similar analysis gives a contradiction if the~$1$ in the center
of~$L$ is below the main diagonal.

We conclude that $M(\gamma,\pi_\gamma)$ satisfies all four conditions
from Lemma~\ref{TL1}.  This implies ${M(\gamma,\pi_\gamma)\in \cd_n^\prime}$,
as desired.\qed

\bigskip

\section{Decidibility}\label{s:undecide}

\subsection{Simulating Turing Machines}
It is well known and easy to see that two-stack automata can
simulate (nondeterministic) \emph{Turing machines}.
This simulation works by using the two-stacks to represent the tape
of the Turing machine (see e.g.~\cite{HMU}). To recap, let one
stack contain everything written on the tape to the left of the head,
while the other stack contains everything written to the right.
Moving the head of the Turing machine left or right corresponds
to removing a symbol from one stack and writing a
(possibly different) symbol on the other stack.  This simulation
is direct and can be done in polynomial time.  We refer
to \cite{HMU,Sip} for the definitions and details.

\subsection{Proof of Theorem~\ref{t:undecide}}\label{ss:undecide-proof}
Let $M$ be an arbitrary deterministic Turing machine, with no input.
Consider the two-stack automaton which simulates~$M$. If $M$ does not halt,
$G(\Gamma,n)=0$ for all~$n$. If $M$ eventually halts,
then $\Gamma$ will have a single balanced path, and $G(\Gamma,n)=1$
for some~$n$.

Construct the $F$ and $F^\prime$ associated with this $\Gamma$ as
in Lemma~\ref{l:mainL}. By analogy, we then have \.
$C_n(\mathcal{F})=C_n(\mathcal{F^\prime})$~mod~$2$ \. for all \.
$n=d$~mod~$c$ \. \emph{if and only if} \. $M$ \ts does not halt.
Recall also that the \emph{halting problem} is undecidable
(see e.g.~\cite{Sip}).

Therefore, by the argument above, the construction in the proof of
Lemma~\ref{l:mainL} can be emulated by a Turing machine,
and the construction satisfies \.
$C_n(\mathcal{F})=C_n(\mathcal{F^\prime})$~mod~$2$ \. for all \.
$n\neq d$~mod~$c$. Thus, given~$M$, we can construct finite sets
$\cf$ and $\cf^\prime$ permutation patterns,
such that $C_n(\mathcal{F})=C_n(\mathcal{F^\prime})$ for all $n\ge 1$
if and only if $M$ does not halt. Therefore, it is undecidable whether
$C_n(\mathcal{F})=C_n(\mathcal{F^\prime})$ for all $n\ge 1$.\qed

\subsection{Implications} \label{ss:fin-rem-undecide}
Theorem~\ref{t:undecide} has rather interesting implications for
our understanding of pattern avoidance (cf.~$\S$\ref{ss:fin-rem-P-recursive}).
For example, a standard argument (see e.g.~\cite[p.~2]{Poon}),
 gives the following surprising result:

\begin{cor}\label{cor:undecide-ZFC}
There exist two finite sets of patterns $\cf$ and~$\cf'$, such that the
problem of whether \.
$C_n(\cf) = C_n(\cf') $ \. {\rm mod\.{}2} \. for all \ts $n\in \nn$, \ts
is independent of~{\rm \ts ZFC}.
\end{cor}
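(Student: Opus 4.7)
The plan is to combine Theorem~\ref{t:undecide} with the standard technique (going back to G\"odel and used explicitly for concrete statements in Poonen~\cite{Poon}) that promotes undecidability of a problem to ZFC-independence of individual instances. Specifically, I would let $T$ be a deterministic Turing machine with no input that enumerates all formal $\mathrm{ZFC}$-proofs in some fixed effective encoding and halts the moment it encounters a proof of ``$0=1$''. Applying the explicit construction from the proof of Theorem~\ref{t:undecide} to $T$, and then passing from partial patterns to honest permutation patterns via Proposition~\ref{prop2}, yields finite sets $\cf,\cf'\ssu S_k$ satisfying
$$
C_n(\cf) \equiv C_n(\cf') \pmod{2} \ \text{ for all } n\in\nn \quad \Longleftrightarrow \quad T \text{ does not halt.}
$$
This biconditional is not merely true but provable in $\mathrm{ZFC}$, because the reduction in Theorem~\ref{t:undecide} is primitive recursive and the supporting verification (Main Lemma~\ref{l:mainL} together with Lemma~\ref{l:mainL2}) is entirely finitary.

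Under the standing assumption that $\mathrm{ZFC}$ is consistent, the right-hand side of the biconditional is $\mathrm{Con}(\mathrm{ZFC})$. If $\mathrm{ZFC}$ proved the left-hand side, it would therefore prove $\mathrm{Con}(\mathrm{ZFC})$, contradicting G\"odel's second incompleteness theorem. Conversely, if $\mathrm{ZFC}$ proved the negation of the left-hand side, it would prove the $\Sigma_1$-sentence ``$T$ halts''; any such proof would exhibit an explicit halting computation, producing an outright inconsistency in $\mathrm{ZFC}$ and contradicting consistency. Hence the pattern-avoidance statement is independent of $\mathrm{ZFC}$, giving the corollary.

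The only point needing care (rather than an honest obstacle) is verifying that the reduction underlying Theorem~\ref{t:undecide} is genuinely formalizable in $\mathrm{ZFC}$, so that the biconditional above is a $\mathrm{ZFC}$-theorem and not merely a true metastatement; this is clear by inspection, since every ingredient --- the two-stack automaton simulating $T$, the alphabet $\ca_g$, the sets $\cf_1,\ldots,\cf_5$, and the involution $\phi$ --- is explicit and elementary. The argument is not special to $\mathrm{ZFC}$: the same conclusion holds with any consistent, recursively axiomatizable theory strong enough to encode basic computability in place of $\mathrm{ZFC}$.
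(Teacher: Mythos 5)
Your approach is the one the paper intends --- it gives no proof and simply cites the ``standard argument'' in Poonen~\cite{Poon}, which you have written out in full. The first direction is fine: a $\mathrm{ZFC}$-proof of the left-hand side would yield a $\mathrm{ZFC}$-proof of $\mathrm{Con}(\mathrm{ZFC})$, contradicting G\"odel's second incompleteness theorem under the bare assumption that $\mathrm{ZFC}$ is consistent.

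There is, however, a genuine gap in the converse direction. You assert that a $\mathrm{ZFC}$-proof of the $\Sigma_1$-sentence ``$T$ halts'' would ``exhibit an explicit halting computation, producing an outright inconsistency.'' This does not follow: a classical proof of an existential arithmetic statement need not supply a witness, and consistency of $\mathrm{ZFC}$ alone does not rule out $\mathrm{ZFC}\vdash\neg\mathrm{Con}(\mathrm{ZFC})$. (For comparison, assuming $\mathrm{Con}(\mathrm{ZFC})$ the theory $\mathrm{ZFC}+\neg\mathrm{Con}(\mathrm{ZFC})$ is consistent by G\"odel's second theorem, yet it proves a false $\Sigma_1$-sentence.) The standard repair is to assume $\Sigma_1$-soundness --- equivalently $1$-consistency --- of $\mathrm{ZFC}$ rather than mere consistency: under that hypothesis, $\mathrm{ZFC}\vdash$~``$T$ halts'' really does imply $T$ halts, hence $\mathrm{ZFC}$ is inconsistent, a contradiction. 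Alternatively, one can replace $\mathrm{Con}(\mathrm{ZFC})$ by a Rosser sentence (and accordingly replace $T$) to get independence from consistency alone, via the Rosser trick rather than G\"odel's second theorem. Either fix is routine, but as written your argument for the second direction claims more than the stated hypothesis of consistency can deliver, and the specific justification --- that the proof would ``exhibit'' a computation --- is incorrect.
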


We would be curious to see an explicit bound on the size of such sets
of patterns, but it is likely to be quite large.
Here is another immediate application of the theorem:

\begin{cor}\label{cor:undecide-first-change}
For all $k$ large enough, there exist two finite sets of patterns
$\cf$ and~$\cf'$, such that the smallest $n$ for which \.
$C_n(\cf) \ne C_n(\cf')$ \. {\rm mod~2}, \ts satisfies \ts
{\rm $n> \rA(n)$}.
\end{cor}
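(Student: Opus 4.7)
The plan is to deduce the corollary from Theorem~\ref{t:undecide} by the textbook observation that an undecidable problem cannot admit a witness whose size is bounded by a computable function of the input. Here the role of the computable bound is played by the Ackermann function~$\rA$, which is total recursive.

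First, I would fix a computable notion of \emph{size}~$k$ for a pair $(\cf,\cf')$ of finite sets of patterns — for instance, the total bit-length of a canonical encoding of both sets as lists of permutation matrices. All that matters is that there are only finitely many pairs of size~$\le k$, and that ``has size $\le k$'' is decidable. This merely pins down what the parameter~$k$ in the statement refers to.

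Next, I would suppose for contradiction that the corollary fails: then for arbitrarily large~$k$, every pair $(\cf,\cf')$ of size~$\le k$ which is ever unequal mod~$2$ first becomes unequal at some $n \le \rA(k)$. From this I would assemble a decision procedure for the problem shown undecidable in Theorem~\ref{t:undecide}. On input $(\cf,\cf')$, compute its size~$k$; compute $\rA(k)$; and for each $n = 1,2,\ldots, \rA(k)$ evaluate $C_n(\cf)$ and $C_n(\cf')$ modulo~$2$ by brute force over the $n!$ permutation matrices of order~$n$. Output ``equal mod~$2$ for all~$n$'' iff no disagreement is seen in this range. Under the assumed bound this answer is correct, which contradicts Theorem~\ref{t:undecide}.

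The argument is entirely schematic: the Ackermann function may be replaced by any total recursive function $f$, and the same proof shows that the ``first change'' location cannot be bounded by $f(k)$ for all large~$k$. I do not anticipate any genuine obstacle; the only judgement call is the precise size parameter~$k$, and any reasonable encoding-length measure is compatible with the construction of~Lemma~\ref{l:mainL}.
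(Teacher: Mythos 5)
Your overall approach — deduce the corollary from Theorem~\ref{t:undecide} by the contrapositive ``a computable bound on the first-disagreement location would make the problem decidable'' — is exactly the standard argument the paper has in mind (it calls this an ``immediate application,'' and you also correctly read the displayed $\rA(n)$ as a typo for $\rA(k)$ with $k$ a size parameter). The one genuine issue is a quantifier slip between your negation and your decision procedure. The negation of ``for all $k$ large enough, $\exists$ a pair of size $\le k$ with first disagreement $> \rA(k)$'' is, as you write, ``for \emph{arbitrarily large} $k$ (i.e.\ infinitely many, but not all), every pair of size $\le k$ has first disagreement $\le \rA(k)$.'' But your decision procedure, given an input of size $k_0$, searches only up to $\rA(k_0)$. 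Nothing in the assumed negation guarantees that $k_0$ is one of the good values of $k$; the input pair might first disagree somewhere strictly between $\rA(k_0)$ and $\rA(k)$ for the smallest good $k\geq k_0$, and the procedure would then wrongly report equality. Since the set of good $k$ need not be computable, you cannot repair this by searching for a good $k\geq k_0$ either.

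What your argument does prove cleanly is the formulation ``there exists a pair $(\cf,\cf')$ whose first mod-$2$ disagreement exceeds $\rA$ of its size'': the negation of \emph{that} is ``the bound $\rA(\text{size})$ holds for every pair,'' which does validate your brute-force procedure and contradicts Theorem~\ref{t:undecide}. If one insists on the literal ``for all $k$ large enough'' quantification, a direct construction is cleaner: feed a busy-beaver Turing machine on $m$ states (one halting after more than $\rA(g(m))$ steps, where $g$ is the computable size blow-up of the reduction in Section~\ref{s:undecide}) into the construction behind Theorem~\ref{t:undecide}, and let $m$ range; this produces, for each such $m$, an explicit pair whose first disagreement exceeds $\rA$ of its size. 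Either fix is fine; the point is only that the decision procedure as written is not justified by the negation as written.
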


Here $\rA(n)$ is the Ackermann function (see e.g.~\cite{AF}),
but any other computable function can be used in its place.

\subsection{Speculations} \label{ss:fin-rem-undecide-conj}
These results give us confidence in the following open prolems,
further extending Theorem~\ref{t:undecide}.

\begin{conj}[Parity problem] \label{conj:undecide-even}
The problem of whether \. $C_n(\cf) = 0$ \. {\rm mod~2} \. for all \ts $n\in \nn$, \ts
is~undecidable.
\end{conj}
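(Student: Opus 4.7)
The plan is to reduce the halting problem to the parity problem in the style of Theorem~\ref{t:undecide}. The crucial missing ingredient is a single-pattern-set version of the Main Lemma~\ref{l:mainL}: one would need to show that for every two-stack automaton~$\Ga$ there exist a finite set~$\cf^\ast$ of partial patterns and constants $c, d \ge 1$ such that $C_{cn+d}(\cf^\ast) \equiv G(\Ga, n) \pmod 2$ for all $n \in \nn$, while $C_m(\cf^\ast) \equiv 0 \pmod 2$ for every $m \not\equiv d \pmod c$.

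Granted such a strengthening, the argument of $\S$\ref{ss:undecide-proof} applies nearly verbatim. Given a deterministic Turing machine~$M$ with no input, use the standard stack simulation to build a two-stack automaton~$\Ga_M$ with $G(\Ga_M, n) \in \{0,1\}$ and such that the sequence $\{G(\Ga_M, n)\}$ is identically zero iff $M$ does not halt. Applying the hypothetical strengthened lemma to~$\Ga_M$ and then Proposition~\ref{prop2} produces a finite set of permutation patterns~$\cf^\ast$ with $C_n(\cf^\ast) \equiv 0 \pmod 2$ for every $n$ iff $M$ does not halt, and undecidability of halting transfers to undecidability of the parity problem.

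The main obstacle is the strengthening. In the current construction, $\cf' = \cf \cup \{B, B'\}$ exists precisely so that $C_{\kK}(\cf) - C_{\kK}(\cf')$ isolates matrices containing a~$B$ or~$B'$, after which the involution~$\phi$ of Lemma~\ref{TL1} pairs up everything outside $\cd_n^\prime$. To eliminate the subtraction I would try to absorb the role of the toggled submatrix directly into $\cf^\ast$: augment each permutation matrix with an auxiliary "decoration" (for instance, an additional pair of special consecutive blocks, only one of which may appear, governed by two mutually exclusive partial patterns added to $\cf^\ast$) so that the analogue of $\phi$ becomes a free $\zz/2$-action on $\cC_n(\cf^\ast) \sm \cd_n^\prime$ itself. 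This amounts to enlarging~$\ca_g$ and rewriting the sets $\cf_i, \cw_i, \cw_i^\prime$ so that every configuration outside the rigid balanced-path image of Lemma~\ref{TL2} still carries a canonical leftmost flippable submatrix, while the balanced-path matrices remain fixed by the rigid structure derived in the proof of Lemma~\ref{TL2}.

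Killing the parity on residues $m \not\equiv d \pmod c$ looks more tractable: one can add partial patterns enforcing the global $P \ldots Q$ block-structure so that $\cC_m(\cf^\ast) = \emptyset$ outside the progression $m = (3n+2)g$, which is already almost the case in Lemma~\ref{TL2}; any residual matrices can be killed by a separate small cancellation involution. The delicate point, and where I expect the bulk of the technical work, is making these rigidity constraints compatible with the free $\zz/2$-action underlying the replacement of~$\phi$ — the temptation is to use the same marker both for flipping and for length-pinning, and the constructions will conflict unless the two roles are carefully separated. Once this is done, the proof of Conjecture~\ref{conj:undecide-even} follows the template of Theorem~\ref{t:undecide} without further changes.
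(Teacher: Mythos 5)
This statement is Conjecture~\ref{conj:undecide-even}, which the paper does \emph{not} prove; it is stated as open, and the authors write immediately afterward that establishing it ``goes beyond the scope of the paper'' and would require ``a great deal more effort.'' There is therefore no proof in the paper to compare against, and your proposal is not a proof either---by your own account the ``crucial missing ingredient'' and ``the bulk of the technical work'' are unresolved.

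Your diagnosis of the obstacle is, however, correct and matches what the paper implicitly signals. The Main Lemma~\ref{l:mainL} only controls the \emph{difference} $C_{cn+d}(\cf)-C_{cn+d}(\cf')$, and the entire mechanism of Lemma~\ref{TL1} depends on that subtraction: the involution~$\phi$ acts on $\cd_n=\cC_{\kK}(\cf')\setminus\cC_{\kK}(\cf)$, i.e.\ precisely on those avoiders that contain a $B$ or $B'$, and the subtraction is what discards the (typically huge) set of avoiders containing no marker at all. To settle the parity conjecture one needs a single set $\cf^\ast$ with $C_{cn+d}(\cf^\ast)\equiv G(\Ga,n)\pmod 2$ and $C_m(\cf^\ast)\equiv 0\pmod 2$ off the progression. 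This is strictly harder, because any replacement involution must now also pair up, fixed-point-freely, all the $\cf^\ast$-avoiders lacking any toggleable block---a population the present construction never has to touch. Your suggestion of a ``decoration block'' inducing a free $\zz/2$-action is a sensible direction, but as written it is a gesture rather than a construction: you do not say what the decoration is, why the resulting action is free outside the rigid balanced-path matrices of Lemma~\ref{TL2}, or how the block-diagonal rigidity argument survives when the ambient matrix need not contain a single $B$. Likewise, pinning the length to the progression $m=(3n+2)g$ by forcing the $P\!\ldots Q$ structure only works once that structure is forced, which again relies on markers being present. So the gap is real and sits exactly where you flag it; the reduction skeleton is sound, but the key lemma it invokes is not available.
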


We believe that our tools will prove useful to establish this conjecture,
likely with a great deal more effort. This goes beyond the scope of the paper.

\begin{conj}[Wilf--equivalence problem]
\label{conj:undecide-equal}
The problem of whether \. $C_n(\cf_1) = C_n(\cf_2)$ \.
for all \ts $n\in \nn$ \ts is undecidable.
\end{conj}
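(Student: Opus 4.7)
The plan is to reduce from the halting problem, following the template of Theorem~\ref{t:undecide} but upgrading the mod-$2$ equality of the Main Lemma~\ref{l:mainL} to an \emph{exact} equality. Given a deterministic Turing machine~$M$ with empty input, I would simulate it by a two-stack automaton~$\Gamma$ so that $G(\Gamma,n)=1$ for a single~$n$ if $M$ halts and $G(\Gamma,n)=0$ for all~$n$ if $M$ runs forever. Associating to~$\Gamma$ finite pattern sets $\cf_1,\cf_2$ with $C_n(\cf_1)=C_n(\cf_2)$ for \emph{every}~$n$ precisely when $M$ fails to halt would then translate the undecidability of halting directly into undecidability of Wilf--equivalence.

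The principal obstruction is to strengthen Lemma~\ref{l:mainL} from $|\cd_n|\equiv G(\Gamma,n)\pmod{2}$ to $|\cd_n|=G(\Gamma,n)$ on the nose. Inspection of Section~\ref{s:MLP} shows that $C_n(\cf)-C_n(\cf')=|\cd_n|$ holds exactly, and the modulus appears only because the involution~$\phi$ is used to reduce $|\cd_n|$ to the fixed-point count $|\cd_n'|$. To eliminate this modulus one must eliminate the non-fixed points of~$\phi$ themselves, i.e.\ arrange that $\cd_n=\cd_n'$, so that every matrix contributing to~$\cd_n$ already has the rigid ``computation diagonal'' structure of Lemma~\ref{TL2}. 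Concretely, I would enlarge the forbidden set with an additional layer $\cf_6$ of partial patterns forbidding any submatrix~$B$ from appearing anywhere other than as the marked~$\bbs$ inside an element of~$\cf_4'$, and rerun the proofs of lemmas~\ref{TL2} and~\ref{TL3} without recourse to~$\phi$, verifying that $\cd_n=\{M(\gamma,\pi_\gamma):\gamma\text{ balanced}\}$ exactly.

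If successful, the non-halting case yields $\cd_n=\emptyset$ for every~$n$ in the arithmetic progression $n\equiv d\pmod{c}$, so $C_n(\cf)=C_n(\cf')$ there; for indices outside the progression a rigid $M(\gamma,\pi_\gamma)$ configuration cannot exist at all, so by the same argument $|\cd_n|=0$ and equality extends to all~$n$ (any residual asymmetry would be absorbed by appending a disjoint ``filler'' component contributing equally to $C_n(\cf_1)$ and $C_n(\cf_2)$, in the spirit of~$\S$\ref{ss:undecide-proof}). The hard part will be the explicit design of $\cf_6$: because an unblocked $B$ in a $\phi$-pair can sit in essentially arbitrary position, the catalog of bad local configurations is very large, and further $\cf_5$-style patterns are likely needed to suppress parasitic $Z_p$--$Z_q$ incidences newly exposed by the added rigidity. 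One must simultaneously verify that no legitimate $M(\gamma,\pi_\gamma)$ is accidentally forbidden by~$\cf_6$, which will require a version of the case analysis of Section~\ref{s:TL} extended throughout the entire matrix rather than only on the computation diagonal. This considerably heavier combinatorial bookkeeping is consistent with the authors' own remark that establishing Conjecture~\ref{conj:undecide-equal} will require ``a great deal more effort.''
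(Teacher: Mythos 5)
The statement you're attacking is stated in the paper as an open \emph{conjecture} (Conjecture~\ref{conj:undecide-equal}), not a theorem; the paper offers no proof of it and, in fact, explicitly flags it as ``more speculative'' than the parity version (Conjecture~\ref{conj:undecide-even}) and hints that it might require fundamentally different ideas. So there is no proof in the paper to compare yours against---what must be assessed is whether your proposal would actually close the conjecture.

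It would not, and the gap is exactly at the step you yourself identify as ``the hard part.'' Your plan is to upgrade Lemma~\ref{l:mainL} from $|\cd_n|\equiv G(\Gamma,n)\pmod{2}$ to $|\cd_n|=G(\Gamma,n)$ by adding a layer~$\cf_6$ of partial patterns that forces every occurrence of~$B$ to sit in a marked position. But ``marked'' is a \emph{positive, existential} condition: it demands that certain companion blocks ($P$, $T_j$, another~$B$, etc.)\ \emph{be present} at prescribed offsets, and that these in turn be marked, propagating globally. Pattern avoidance, by contrast, only expresses negative, universally quantified constraints: ``no submatrix realizes this shape.'' The paper's entire design works around precisely this asymmetry---they cannot force the diagonal computation structure directly, so instead they introduce the $B\leftrightarrow B'$ toggle~$\phi$, accept a large and uncontrolled set of non-fixed points, and retreat to counting mod~$2$ where those non-fixed points cancel in pairs. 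Deleting the non-fixed points by fiat, rather than cancelling them, is not a bookkeeping refinement of that strategy; it is the opposite strategy, and it requires a mechanism (expressing ``blocked'' as a finite union of forbidden local patterns) that the authors evidently do not believe exists. The paper's own remark that $C_n(\cf)-C_n(\cf')$ ``grows rather rapidly'' in their construction is a quantitative way of saying that the non-fixed points dominate and cannot be wished away; it also raises the possibility, which your proposal does not address, that the smallest~$n$ witnessing $C_n(\cf)\neq C_n(\cf')$ could be constructible, which would make the exact Wilf--equivalence problem \emph{decidable} on constructions of this kind. Until you can exhibit a concrete finite~$\cf_6$ and prove it captures exactly the unmarked~$B$'s without also killing the legitimate $M(\gamma,\pi_\gamma)$'s, the proposal remains a plan, not a proof.
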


This conjecture is partly motivated by the recently found
unusual examples of Wilf--equivalence~\cite{BP}.  It is
more speculative than Conjecture~\ref{conj:undecide-even},
since in our construction $C_n(\cf)-C_n(\cf')$
grows rather rapidly.  It is thus conceivable that in contrast with
Corollary~\ref{cor:undecide-first-change},
the smallest~$n$ for which $C_n(\cf)\ne C_n(\cf')$
is constructible, perhaps even quite small in the
size of the patterns.  This would imply that the mod-2
problem is computationally harder than the Wilf--equivalence
problem, a surprising conclusion.

In a different direction, it would be also very interesting
to resolve the parity and the Wilf--equivalence problems
when only one permutation is avoided.   In this case, we believe that
both are likely to be decidable. 

\begin{conj}
\label{conj:undecide-single}
For forbidden sets with a single permutation $|\cf|=|\cf'|=1$, both
the parity and the Wilf-equivalence problems are decidable.
\end{conj}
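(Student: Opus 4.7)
The plan is to exploit the far greater rigidity of single-pattern avoidance classes compared to the multi-pattern sets used in our Main Lemma. The encoding of a two-stack automaton in Section~\ref{s:contruction} requires the simultaneous action of five distinct families of forbidden partial patterns $\cf_1,\ldots,\cf_5$, each enforcing a separate geometric constraint (block structure, corner marking, label alphabet, stack nesting, and boundary). The first step of the plan is to prove that no single permutation pattern~$\om$ can enforce enough of these constraints to simulate an arbitrary Turing machine; one expects Marcus--Tardos-type bounds together with simple-permutation decomposition theory (see e.g.~\cite{AAK}) to force an effective combinatorial description of the avoiders in~$S_n(\om)$.

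Granting such a description---for instance, that the sequence $\{C_n(\{\om\})~{\rm mod}~2\}$ is \emph{$2$-automatic}, or more weakly that there is an effectively computable bound $N = N(|\om_1|,|\om_2|)$ detecting Wilf--equivalence---both decidability problems follow. For the parity problem, automaticity reduces the question ``is $C_n(\{\om\}) \equiv 0~{\rm mod}~2$ for all $n\in\nn$?'' to an emptiness test for a finite automaton, which is decidable. For the Wilf--equivalence problem, one computes $C_n(\{\om_1\})$ and $C_n(\{\om_2\})$ for $n \le N$ and compares them. An auxiliary step is to verify that all currently known Wilf--equivalences---the trivial symmetries, the classical results of Babson--West and Backelin--West--Xin, and the unusual pairs from~\cite{BP}---are produced by the proposed structural framework, as a consistency check.

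The main obstacle is that structural theorems of the required strength are not currently available. Even P-recursiveness of $\{C_n(\{\om\})\}$ in the single-pattern case---the original form of the Noonan--Zeilberger conjecture---remains open, with $\om = (1324)$ a notorious example where not even the precise growth rate has been pinned down. Any proof of Conjecture~\ref{conj:undecide-single} therefore seems to require a genuinely new structural result: strong enough to yield an effective description of $\{C_n(\{\om\})\}$, yet weak enough to remain plausible given the open status of P-recursiveness. A natural starting point is to handle restricted families first (layered patterns, separable patterns, or finitely based grid classes), where generating-function machinery gives decidability essentially for free, and then to try to extend the method to patterns of unbounded ``wildness.''
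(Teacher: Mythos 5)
The statement you were asked to prove is a \emph{conjecture} in the paper (Conjecture~\ref{conj:undecide-single}), placed in the ``Speculations'' subsection of Section~\ref{s:undecide}; the authors do not prove it and explicitly present it as open. Their only justification is heuristic: they write that ``the undecidability is a much stronger condition and we do not believe one permutation has enough room to embed all logical axioms,'' and they draw an analogy with aperiodic tilings (see~$\S$\ref{ss:fin-rem-tilings}), where undecidability constructions historically required many tiles (Berger) before being compressed (Robinson), and a single forcing tile remains open. Your proposal is the same heuristic made slightly more concrete: you observe that the Main Lemma~\ref{l:mainL} requires five interacting families $\cf_1,\ldots,\cf_5$ enforcing distinct geometric constraints, and argue a single pattern cannot carry all of them simultaneously, then you name $2$-automaticity or an effective Wilf--equivalence bound $N(|\om_1|,|\om_2|)$ as the target structural result. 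You also, correctly, flag the central obstacle: no structural theorem of the required strength is known, and even P-recursiveness of $\{C_n(1324)\}$ is open. So there is no ``gap'' in the sense of a false step, because neither you nor the authors offer a proof. Just be clear-eyed that what you wrote is a research program, not an argument: the parts that would actually establish decidability (the structural reduction to automaton emptiness-testing, or the effective Wilf--equivalence bound) are exactly the parts you admit are unavailable. Your suggestion to first handle restricted families such as separable or finitely based grid classes is sensible and not something the paper pursues, so if anything that is the one place your outline goes beyond the paper's own remarks.
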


In order to clarify and contrast the conjectures, let us note that
non-P-recursiveness is quite possible and rather likely for
permutation class avoiding one or two permutations
(see~$\S$\ref{ss:fin-rem-1324}).  However, the undecidability is
a much stronger condition and we do not believe one permutation
has enough room to embed all logical axioms.  This
``small size is hard to achieve'' phenomenon is somewhat
similar to aperiodicity and undecidability for plane tilings
(see~$\S$\ref{ss:fin-rem-tilings}).

\bigskip

\section{Wilfian formulas}\label{s:wilfian}

\subsection{Complexity preliminaries}\label{ss:wilfian-complexity}
Recall the $\ParP$ complexity class which is a parity version of
the class of counting problem~$\SP$, see e.g.~\cite{Pap}. For example,
\textsf{$\oplus$2SAT} and the parity of the number of Hamiltonian cycles
are $\ParP$-complete, see~\cite{Val}. It is strongly believed that
$\ParP\ne \P$.  In fact, if $P = \ParP$, then $\PH = \BPP$ by Toda's theorem.
In particular, this implies $\NP = \BPP$, contradicting widely
held beliefs that $\P = \BPP$ and $\P \ne \NP$.

Recall the exponential complexity classes~$\EXP$, $\NEXP$ and its
counting counterpart~$\SEXP$.
Let us now formally define an exponential time version of~$\ParP$.
A language $D$ is in $\PAR$ if there exists a nondeterministic
Turing machine $M$ such that $M$ has an \emph{odd number} of accepting
paths on input $n$ if and only if $n\in D$, and every path in
$M$ halts in time $O(2^{p(\ell)})$ where $p$ is a polynomial,
and $\ell$ is the length of the input in bits. For example,
computing parity of the number of non-isomorphic Hamiltonian
graphs on $n$ vertices is in~$\PAR$, see~\cite[A003216]{OEIS}.
Similarly, for a fixed~$\cf$, computing $C_n(\cf)$\,mod~2 is in~$\PAR$
by definition (cf.~\ref{ss:fin-rem-wilfian}).

Now,  it is believed that $\EXP \ne \PAR$ for reasons similar
to $\P \ne \ParP$.  First, note that $\P = \ParP$ implies $\EXP = \PAR$,
see e.g.~\cite{BBF}.  In the opposite direction, it follows from the
proof of the main result in~\cite{HIS}, that
$\EXP = \PAR$ implies that there are no \emph{polynomially sparse languages}
in $\ParP\backslash\P$ (see~\cite{Pap} for definitions).
This is an unlikely conclusion given how far apart $\P$ and $\ParP$
are.\footnote{This argument and further evidence against $\ts \EXP = \PAR\ts$ was communicated to us
by Joshua~Grochow on \emph{CS Theory Stack Exchange}: \ts {\tt http://tinyurl.com/m79o4dv}.}
For more discussion of~$\PAR$ in relation to other complexity classes, see~\cite{BBF}
(see also~$\S$\ref{ss:fin-rem-complexity}).

\subsection{Proof of Theorem~\ref{t:wilfian}}\label{ss:wilfian-proof}
Let $D$ be a language in~$\PAR$. There exists a nondeterministic
Turing machine $M$ and a polynomial $p$ such that $M$ has an odd
number of accepted paths on input $n$ if and only if $n\in D$,
and every path in $M$ halts in time at most $2^{p(\log n)}$ on
input~$n$.

Let us form $M^\prime$ modifying~$M$, by artificially extending
every accepted path in such a way that it instead accepts at exactly time
$\lfloor 2^{p^\prime(\log n)}+n\rfloor$ for some $p^\prime\geq p$.
We further assume that $p^\prime$ is increasing, so
$\lfloor 2^{p^\prime(\log n)}+n\rfloor$ is injective on~$\mathbb{N}$.
Therefore, all accepted paths of $M^\prime$ of length
$\lfloor 2^{p^\prime(\log n)}+n\rfloor$ must come from input~$n$.

Let us form $M^{\prime\prime}$ by modifying $M^\prime$
in such a way that instead of taking an input $n$,
it nondeterministically chooses a natural number~$n$.
The number of accepted paths of length
$\lfloor 2^{p^\prime(\log n)}+n\rfloor$
will still be odd if and only if $n\in D$.
We can now build a two-stack automaton $\Gamma$ which simulates
$M^{\prime\prime}$ so that $G(\Gamma,\lfloor 2^{p^{\prime\prime}(\log n)}+n\rfloor)$
is equal to the number of accepted paths in $M$ of length
$\lfloor 2^{p^\prime(\log n)}+n\rfloor$.

Finally, we construct the sets of patterns $\cf$ and $\cf^\prime$
associated with this~$\Gamma$, following the construction in
Lemma~\ref{l:mainL}. We have then:
$$
G(\Gamma,\lfloor 2^{p^{\prime\prime}(\log n)}+n\rfloor)
\, = \, C_m(\mathcal{F})\. - \. C_m(\mathcal{F}^\prime)\mod 2\ts,
$$
where $m=c\lfloor 2^{p^{\prime\prime}(\log n)}+n\rfloor+d$.
This implies that $n\in D$ if and only if \ts $C_m(\mathcal{F})-C_m(\mathcal{F}^\prime)$
is odd.

Assume now that $\mathcal{F}$ and $\mathcal{F}$ have a Wilfian formula.
Then we can compute ${C_m(\mathcal{F})-C_m(\mathcal{F}^\prime)}$
and thus decide if $n\in D$ in time polynomial in $m$.
Clearly, time polynomial in~$m$ is time exponential in~$(\log n)$.
This implies that $\EXP= \PAR$, a contradiction.\qed 

\bigskip

\section{Final remarks and open problems}\label{s:fin-rem}

\subsection{}\label{ss:fin-rem-growth}
Despite a large literature on pattern avoidance, relatively
little is known about general sets of patterns.  Notably,
the \emph{Stanley--Wilf conjecture} proved by Marcus and Tardos
shows that $C_n(\cf)$ are at most exponential~\cite{MT}, improving
on an earlier near-exponential bound by Alon and Friedgut~\cite{AF}.
Most recently, Fox showed that for a fixed~$k$ and a random 
permutation $\om \in S_k$, we have
$C_n(\cf) = \exp\bigl(k^{\Theta(1)}n\bigr)$~\cite{Fox}
(see also~\cite[$\S$2.5]{Vat3}).

\subsection{}\label{ss:fin-rem-asy}
For an integer P-recursive sequence $\{a_n\}$, the generating series
$\ts \ca(t) = \sum_{n=0}^\infty \. a_n \ts t^n$ \ts
is \emph{D-finite} (\emph{holonomic}), i.e.~satisfies a linear ODE (see e.g.~\cite{FS,Sta}).
In the case $a_n = e^{O(n)}$, the series $\ca(t)$ is also a \emph{$G$-function},
an important notion in Analytic Number Theory (see e.g.~\cite{Gar}).

\subsection{}\label{ss:fin-rem-equiv}
The most celebrated example of Wilf-equivalence is $(123)\sim (213)$.
It follows from results of MacMahon~(1915) and
Knuth~(1973), that \ts $C_n(123) =C_n(213) = \frac{1}{n+1}\binom{2n}{n}$,
the \emph{$n$-th Catalan number}, see e.g.~\cite{Kit,Sta}.  Many other
Wilf-equivalent classes are known now, see e.g.~\cite{Kit}.

\subsection{}\label{ss:fin-rem-1324}
There are three Wilf-equivalence classes of patterns in~$S_4$.  Two of them
are known to give P-recursive sequences, but whether $\{C_n(1324)\}$ is
P-recursive remains a long-standing open problem in the area
(see e.g.~\cite{Ste,Vat3}). The problem proved so challenging,
Zeilberger himself seems to have abandoned Conjecture~\ref{conj:NZT}
because of it, see~\cite{EV}.

We should mention that there seems to be some recent strong experimental
evidence \emph{against} sequence $\{C_n(1324)\}$ being P-recursive.  Numerical
analysis of the values for $n\le 36$ given in~\cite{CG} 
suggest the asymptotic behavior
$$
C_n(1324) \sim A \cdot \la^n \cdot \mu^{\sqrt{n}}\cdot n^\al\,,
$$
where $A \approx 8$, $\la \approx 11.6$, $\mu \approx 0.04$, and $\al \approx -1.1$.
If this asymptotics holds, this would imply non-P-recursiveness by Theorem~7
in~\cite{GP1}, which forbids $\mu^{\sqrt{n}}$ terms.

For larger sets, the leading candidates for non-P-recursiveness are the
sequences $\{C_n(4231, 4123)\}$ and $\{C_n(4123, 4231, 4312)\}$ which
were recently computed for $n\le 1000$ and $5000$, respectively~\cite{AH+}.

\subsection{}\label{ss:fin-rem-1324-parity}
There is a bit of a controversy over a question whether the integer
$N:=C_{1000}(1324)$
is humanly or supernaturally computable, with Steingr{\'{\i}}msson and
Zeilberger occupying the opposite sides of the argument, see~\cite{EV,Ste}.
From a formal Theoretical~CS point of view, this is really an argument
over the existence of a Wilfian formula to compute $\{C_{n}(1324)\}$ (cf.~\cite{MR}).
This raises a curious question whether computing $\{C_{n}(1324)\ \text{mod}\,2\}$
is in~$\EXP$, which is perhaps easier to resolve.  Leaving aside
philosophical and theological issues, the proof of Theorem~\ref{t:wilfian}
suggests a surprising possibility that even the \emph{parity} of $N$ might
be hard to compute.

Note also that $C_{n}(1324)=V_{n}(1324)\ \text{mod}\,2$, where $V_{n}(\om)$
is the number of \emph{involutions} $\si \in S_n$ avoiding~$\om$.  The sequence
$\{V_{n}(1324)\}$ is still difficult to analyze (see~\cite{BHPV}). It is known
to have a much smaller growth, so perhaps easier to compute.

\subsection{} \label{ss:fin-rem-size-small}
The bound $|\cf| < 10^{119}$ in Corollary~\ref{c:count}
can easily be improved down to a bit under 30,000 by refining the
proof of Theorem~\ref{t:count} and using small technical tricks.
While it is possible that a single permutation suffices (see above),
it is unlikely that the tools from this paper can be used. \ts

\subsection{} \label{ss:fin-rem-tilings}
It is worth comparing the non-P-recursiveness of pattern avoidance
with the history of \emph{aperiodic tilings} in the plane.  Originally
conjectured by Wang to be impossible~\cite{Wang}, such tilings were first
constructed by Berger in 1966 by an undecidability argument~\cite{Ber}.
Note that Berger's construction used about 20,000 tiles.  In 1971,
by a different undecidability argument, Robinson was able to reduce
this number to six~\cite{Rob}. Most recently, Ollinger established the
current record of five polyomino tiles~\cite{Oll}.  Without undecidability,
Penrose famously used only two tiles to construct aperiodic tilings
(see e.g.~\cite{GS}).  Whether there exists one tile which forces
aperiodicity remains open.  This is the  so-called \emph{einstein problem}
(cf.~\cite{ST} and~\cite[$\S$7.5]{Yang}).
See also~\cite{Boas,GI,Pak-horizons} for the complexity of several other
related general tiling problems.

\subsection{} \label{ss:fin-rem-complexity}
Note that we do not claim that computing the parity of
$C_n(\cf)$ is $\PAR$-complete.  This is because our argument
would have $\cf$ and~$\cf'$ as an input, which is tangential
to the Klazar--Vatter question in the introduction.  \ts
Still, it would be interesting to prove the following complexity
variation on Theorem~\ref{t:wilfian}:

\begin{conj}\label{conj:undecide-par-complete}
There exists a finite set of patterns~$\cf$, such that
computing $\.\{C_n(\cf)\}\.$ is \. {\rm $\SEXP$}-complete, and
computing
$\.\{C_n(\cf)$~{\rm mod}~$2\}\.$ is \. {\rm $\PAR$}-complete.
\end{conj}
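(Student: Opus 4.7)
The plan is to combine the two-stack-automaton-to-pattern reduction of the Main Lemma with the direct simulation of universal exponential-time Turing machines used in Theorem~\ref{t:wilfian}, while eliminating the dependence on the auxiliary set $\cf^\prime$ so that hardness can be asserted for a single~$\cf$. The upper bounds $\{C_n(\cf)\}\in\SEXP$ and $\{C_n(\cf)~{\rm mod}~2\}\in\PAR$ are immediate from brute-force enumeration of the $n!$ permutations of $\{1,\ldots,n\}$ in time $2^{O(n\log n)}$ (with $n$ given in unary), so the whole content lies in the hardness direction.

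The hardness reductions themselves are essentially those of Section~\ref{ss:wilfian-proof}, once the Main Lemma is in its single-$\cf$ form. For $\PAR$-hardness, pick a $\PAR$-complete language~$D$, a nondeterministic exponential-time machine~$M$ witnessing it, and use padding to build a two-stack automaton~$\Gamma_M$ with $G(\Gamma_M,N(n))\equiv[n\in D]\pmod 2$ for a poly-time-computable~$N$; then the desired single-$\cf$ Main Lemma gives $C_{c\ts N(n)+d}(\cf)\equiv G(\Gamma_M,N(n))\pmod 2$ up to a poly-time-computable correction, yielding $\PAR$-hardness. For $\SEXP$-hardness one uses the stronger \emph{exact} identity $|\cd_n^\prime|=G(\Gamma,n)$ from the proof of the Main Lemma (rather than its mod~$2$ consequence), together with the fact that counting accepting paths of a $\SEXP$-complete problem reduces to $G(\Gamma,n)$. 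This requires not only the single-$\cf$ form of the lemma but also an \emph{exact} equation $C_n(\cf)=G(\Gamma,n)+f(n)$ with poly-time-computable~$f$.

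The principal new task, and the principal obstacle, is therefore upgrading Lemma~\ref{l:mainL2} accordingly. For the mod~$2$ version it would suffice to construct a fixed-point-free involution $\psi$ on $\cC_{\kK}(\cf^\prime)$, giving $C_{\kK}(\cf^\prime)\equiv 0\pmod 2$ and hence $C_{\kK}(\cf)\equiv G(\Gamma,n)\pmod 2$ directly. A natural candidate adds a twin pair $U,U^\prime$ to the alphabet $\ca_g$ together with new $\cf_4$-style templates forcing every matrix in $\cC_{\kK}(\cf^\prime)$ to contain a designated $U$- or $U^\prime$-block at a distinguished free cell; swapping $U\leftrightarrow U^\prime$ is then the required involution. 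For the exact version needed for $\SEXP$-hardness, $\psi$ must be refined to a bijection pairing the non-canonical elements of $\cC_{\kK}(\cf)\setminus\cd_n^\prime$ with an auxiliary family whose cardinality is easily computable, so that $C_{\kK}(\cf)$ differs from $G(\Gamma,n)$ by a poly-time-computable quantity.

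The hardest part will be verifying compatibility of any such $\psi$ with the rigid skeleton forced on $\cd_n^\prime$ by Lemma~\ref{TL2}: the toggle must act trivially on canonical matrices $M(\gamma,\pi_\gamma)$, must commute with (or be disjoint from) the $B\leftrightarrow B^\prime$ involution $\phi$, and must not introduce any instances of the existing forbidden patterns $\cf_1,\ldots,\cf_5$. I expect this to force a substantially larger alphabet $\ca_g$ and a calibrated expansion of $\cf_4$ analogous to the five families $\cw_1,\ldots,\cw_5$ already in place. Securing the \emph{exact} identity (rather than mod~$2$) for $\SEXP$-completeness is strictly harder than the parity case and will likely require an additional layer of forbidden patterns dedicated to policing the toggle's non-canonical action; this step is where I would expect the proof to stall first.
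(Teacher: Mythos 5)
The statement you were asked to prove is Conjecture~\ref{conj:undecide-par-complete}, not a theorem. The paper does not prove it; it explicitly leaves it open, with the remark in \S\ref{ss:fin-rem-complexity} that ``we do not claim that computing the parity of $C_n(\cf)$ is $\PAR$-complete. This is because our argument would have $\cf$ and $\cf'$ as an input.'' There is therefore no proof in the paper to compare against.

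Your proposal is, to its credit, honest about this: it correctly identifies the same obstruction the authors name --- the Main Lemma produces a \emph{pair} $(\cf,\cf')$ and only controls the difference $C_m(\cf)-C_m(\cf')$, so no single-$\cf$ hardness statement follows --- and it proposes a plausible route (engineering a fixed-point-free involution on $\cC_\kK(\cf')$ to force $C_\kK(\cf')\equiv 0\pmod 2$, then a finer bijection for the exact $\SEXP$ count). But none of the required machinery is constructed: the twin-pair alphabet letters $U,U'$, the new $\cf_4$-style templates policing them, the compatibility with $\phi$ and with the rigid skeleton of Lemma~\ref{TL2}, and especially the exact (not mod~2) identity needed for $\SEXP$-hardness are all left as things you ``would expect'' to work out or to stall. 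As you yourself flag, the exact-count step is substantially harder than the parity step and is not addressed at all. The two easy observations you do make (the $\SEXP$/$\PAR$ membership upper bounds via brute force, and the poly-time padding $N(n)$ borrowed from \S\ref{ss:wilfian-proof}) are sound but are not the content of the conjecture. In short: this is a reasonable research outline that matches the authors' own diagnosis of why the conjecture is open, but it is not a proof, and you should not present it as one.
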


To give an example of a $\PAR$-complete problem, one can take a
parity version of a known $\SEXP$-complete problem in cases
where the reduction is parsimonious.  For example, the parity
of $\ts \frac16 \ts$ times the number of $3$-colorings in
\emph{succinct graphs}~\cite{GW} is a natural candidate
(see~\cite{Bar} for a parsimonious reduction).\footnote{This
approach was suggested to us by Greg Kuperberg on
\emph{CS Theory Stack Exchange}: \ts {\tt http://tinyurl.com/m79o4dv}.}
Perhaps the closest in spirit to the conjecture are the $\NEXP$-complete
tiling problems considered in~\cite{GI}, where the input is a single
integer~$n$, rather than a set of tiles  (cf.~\cite{Boas,Pak-horizons}).

\subsection{}\label{ss:fin-rem-wilfian}  \emph{Wilfian formulas}
were introduced by Wilf in 1982, see~\cite{Wilf-answer}.
They have been christened and received some attention only very
recently~\cite{Vat2,Zei}.\footnote{In the original paper~\cite{Wilf-answer},
Wilf uses two notions of a \emph{formula} which led to some confusion
in the definitions.  In particular, Klazar in~\cite{Kla} uses a more
restrictive notion which is equivalent to the one we use for
exponentially growing sequences.}
We believe this is the first paper which examines them from the
computational complexity point of view.  Of course,
pattern avoidance itself is a special case of the
\emph{pattern matching problem}, with a large algorithmic
and complexity literature.  See e.g.~\cite{GM} for the recent
breakthrough and~\cite[$\S$8.2.1]{Kit} for further references.

We should mention that all P-recursive sequences trivially
have Wilfian formulas, but the opposite is false.  For example,
the sequence of primes is not P-recursive~\cite{FGS}, but the $n$-th
prime is clearly computable in exponential time.  Same with the
number $p(n)$ of partitions of~$n$.  The asymptotics \ts
$p(n) \sim C \cdot n^{-1} \cdot \mu^{\sqrt{n}}$ \ts implies that
it is not P-recursive (see above), but Euler's recurrence (see e.g.~\cite{Pak}),
shows it is computable in time polynomial in~$n$.

A quick warning to the reader.  Just because the sequence
$\{a_n\}$ grows exponentially and has no known $o(a_n)$
time algorithm to compute it, does not mean that $\{a_n \mts\mts\mts\mod 2\}$
is also hard.  Typical examples mentioned in the literature are
the number of self-avoiding paths~\cite{Zei}
(see also~\cite[A001411]{OEIS}), and the number
of unlabeled graphs on $n$ vertices~\cite{Wilf-answer}
(see also~\cite[A000088]{OEIS}).  In both cases, the sequences
are even for $n$ large enough (the first is obvious, the second
is proved in~\cite{CR}).

\subsection{} \label{ss:fin-rem-P-recursive}
In~\cite{EV}, Zeilberger asks to characterize pattern
avoiding permutation classes (of single permutations)
with g.f.'s rational, algebraic, or D-finite.
We believe a complete characterization of the latter
class is impossible.

\begin{open}\label{op:undecide-P-recursive}
The problem of whether \. $\{C_n(\cf)\}$ \. is \. \emph{P-recursive} \ts
is undecidable.
\end{open}

Let us note that our results offer only a weak evidence in favor
of the open problem, since Theorem~\ref{l:mod2} gives only a
necessary condition on P-recursiveness, and  (cf.~\cite{GP1}).
It would be interesting to see if any of Zeilberger's problems
are decidable. A rare decidability result in this direction is given
in~\cite{BRV} (see also a discussion in~\cite[$\S$3]{Vat3}).

\subsection{}\label{ss:fin-rem-ADE}
The original form of the Noonan--Zeilberger Conjecture makes a
stronger claim: \ts for every $\cf=\{\om_1,\ldots,\om_\ell\}$ and an
integer vector $\br=(r_1,\ldots,r_\ell) \in \nn^\ell$, the sequence
$\{C_n(\cf,\br)\}$ is P-recursive, where $C_n(\cf,\br)$
is the number of permutations $\si \in S_n$ which contain
exactly $r_i$ copies of pattern~$\om_i$, for all $1\le i \le \ell$.
It was shown by Atkinson
in~1999, that this stronger claim is equivalent
to the \ts $r_1=\ldots = r_\ell = 0$ \ts case,
which is our Conjecture~\ref{conj:NZT}, see~\cite{Atk}.

Other variations on the Noonan--Zeilberger Conjecture have
also been studied in the literature, including the case
of consecutive patterns~\cite{Eli}, set partitions~\cite{Sag}, and patterns with
infinite support  (see~\cite{Vat3}).
Theorem~\ref{t:NZT} can perhaps be further extended to show
that for some set~$\cf$ the g.f.~for $\{C_n(\cf)\}$ is
not ADE (see e.g.~\cite{Sta}).  We plan to explore this
problem in~\cite{GP3}.

\subsection{}\label{ss:fin-rem-GP}
Some basic ideas in this paper have been first developed in
a much easier case of Wang tilings of the square, studied
in~\cite{GP2} (see also~\cite[$\S$9.8.2]{Kit}).  Curiously,
a key tool is Theorem~\ref{l:mod2}, which we proved
in~\cite{GP1} by a short self-contained argument.  Note
that this lemma is used in~\cite{GP1} also to prove the non-P-recursiveness
of the probabilities of returns in linear groups, but is applied
in a very different way.

\vskip.5cm

\nin
{\bf Acknowledgements.} \ts  We are very grateful to Michael Albert,
Matthias Aschenbrenner, Milk\'{o}s B\'{o}na, Alexander Burstein,
Ted Dokos, Lance Fortnow, Joanna Garrabrant, Ira Gessel, Joshua Grochow,
Greg Kuperberg,
Andrew Marks, Sam Miner, Alejandro Morales, Greta Panova, Andrew Soffer,
Richard Stanley, Vince Vatter and Jed Yang for helpful conversations
and useful comments at different stages of the project.
Special thanks to Doron Zeilberger for telling us about importance
of the conjecture and his ambivalence about its validity.
%
The first author was partially supported by the University of California
Eugene V.~Cota-Robles Fellowship; the second author was partially
supported by the~NSF.

 \newpage


\bigskip
\end{document}